\documentclass[10 pt,reqno]{cedram-aif}
\usepackage{amssymb,amsmath,amstext,amsgen,amsfonts,amsthm,verbatim,hyperref,graphicx,float}
\usepackage{amsrefs}

\newcommand{\N}{\mathbb{N}}
\newcommand{\R}{\mathbb{R}}

\newcommand{\Z}{\mathbb{Z}}
\newcommand{\E}{\mathbb{E}}
\renewcommand{\H}{\mathbb{H}}

\renewcommand{\epsilon}{\varepsilon}
\renewcommand{\phi}{\varphi}

\equalenv{corollary}{coro}
\equalenv{proposition}{prop}
\equalenv{remark}{rema}

\begin{document}

\title[Markov convexity of the Heisenberg group]{Markov convexity and nonembeddability of the Heisenberg group}
\alttitle{Convexit\'e Markov et non-plongeabilit\'e du groupe de Heisenberg}
\author{\firstname{Sean} \lastname{Li}}
\email{seanli@math.uchicago.edu}

\address{Department of Mathematics\\The University of Chicago\\Chicago, IL 60637}
\keywords{Heisenberg group, Markov convexity, biLipschitz, embeddings}
\altkeywords{groupe de Heisenberg, convexit\'e Markov, biLipschitz, plongements}
\subjclass{51F99}

\thanks{I am grateful to Assaf Naor for suggesting this avenue of research to me as well as many helpful conversations.  I'd also like to thank Max Engelstein for helpful conversations and the referee for pointing out many mistakes.  The research presented here is supported in part by NSF postdoctoral research fellowship DMS-1303910.}

\begin{abstract}
  We show that the continuous infinite dimensional Heisenberg group $\mathbb{H}_\infty$ is Markov 4-convex and that the 3-dimensional Heisenberg group $\mathbb{H}_1$ (and thus also $\mathbb{H}_\infty$) cannot be Markov $p$-convex for any $p < 4$.  As Markov convexity is biLipschitz invariant and Hilbert spaces are Markov 2-convex, this gives a different proof of the classical theorem of Pansu and Semmes that the Heisenberg group does not biLipschitz embed into any Euclidean space.
  
  The Markov convexity lower bound follows from exhibiting an explicit embedding of Laakso graphs $G_n$ into $\mathbb{H}_\infty$ that has distortion at most $C n^{1/4} \sqrt{\log n}$.  We use this to derive a quantitative lower bound for the biLipschitz distortion of balls of the discrete Heisenberg group into Markov $p$-convex metric spaces.  Finally, we show surprisingly that Markov 4-convexity does not give the optimal distortion for embeddings of binary trees $B_m$ into $\mathbb{H}_\infty$ by showing that the distortion is on the order of $\sqrt{\log m}$.
\end{abstract}

\begin{altabstract}
  Nous montrons que le groupe de Heisenberg de dimension infinie continue $\mathbb{H}_\infty$ est Markov 4-convexe et que le groupe de Heisenberg 3 dimensions $\mathbb{H}_1$ (et donc aussi $\mathbb{H}_\infty$) ne peut pas \^{e}tre Markov $p$-convexe pour tout $p < 4$.  Comme convexit\'e Markov est bilipschitzienne pr\`es invariant et espaces de Hilbert sont Markov 2-convexe, ce qui donne une autre preuve du th\'eor\`eme classique de Pansu et Semmes que le groupe de Heisenberg ne bilipschitzienne pr\`es int\'egrer dans un espace euclidien.
  
  La convexit\'e Markov limit inf\'erieure d\'ecoule pr\'esentant un plongement explicite de graphes de Laakso $G_n$ en $\mathbb{H}_\infty$ qui pr\'esente une distorsion au plus $C n^{1/4} \sqrt{\log n}$.  Nous l'utilisons pour calculer une quantitative limite inf\'erieure pour la distorsion bilipschitzienne pr\`es de boules du groupe de Heisenberg discr\`ete dans espaces m\'etriques de Markov $p$-convexit\'e.  Enfin, nous montrons de mani\`ere surprenante que Markov 4-convexit\'e ne donne pas le distorsion optimale pour plongements d'arbres binaires $B_m$ en $\mathbb{H}_\infty$, en montrant que la distosion est de l'ordre de $\sqrt{\log m}$.
\end{altabstract}

\maketitle

\section{Introduction}

A Banach space $X$ is said to be finitely representable in another Banach space $Y$ if there exists $K \geq 1$ so that for every finite dimensional subspace $Z \subset X$, there exists a finite dimensional subspace $Z' \subset Y$ so that $d_{BM}(Z,Z') \leq K$, where $d_{BM}$ is the Banach-Mazur distance.  Ribe proved in \cite{ribe} that if two Banach spaces are uniformly homeomorhic (that is, there exists $f : X \to Y$ such that $f$ and $f^{-1}$ are uniform homeomorphisms), then $X$ is finitely representable into $Y$ and vice versa.  Note that this implies that linear properties of Banach spaces that depend only on their finite dimensional substructure are preserved by maps that preserve the metric structure.  This motivated the ``Ribe program'', a research program that reformulates such linear properties in purely metric terms.  For a more details about the Ribe program, see the surveys \cite{ball,naor}.

Recall that a biLipschitz embedding $f : (X,d_X) \to (Y,d_Y)$ is said to have distortion $D \geq 1$ if there exists some $s \in (0,\infty)$ such that
\begin{align*}
  s \cdot d_X(x,y) \leq d_Y(f(x),f(y)) \leq Ds \cdot d_X(x,y), \qquad \forall x,y \in X.
\end{align*}
Given two metric spaces, we say $X$ embeds into $Y$ with distortion $D$ if there exists some biLipschitz embedding $f : X \to Y$ with distortion $D$.  We also define the following quantity:
\begin{multline*}
  c_Y(X) := \inf \{D \geq 1 : \text{there is a biLipschitz embedding } f : X \to Y \\
  \text{ of distortion } D\}.
\end{multline*}
In this paper, when we use a graph as a metric space, the only points in the space are the vertices.  The edges do not exist in the space; they just define the path metric.  We will require all edges of a single graph to have constant length although the actual length itself is irrelevant as calculating distortion allows us to rescale the metrics (by the factor $s$).

The first result in the Ribe program was by Bourgain in \cite{bourgain} where he showed that a Banach space $X$ is {\it not} superreflexive if and only if complete binary trees of depth $n$ equipped with the path metric biLipschitzly embed into $X$ with uniformly bounded distortion over $n$.  It was later shown in \cite{johnson-schechtman} that the same statement holds except with binary trees replaced by diamond graphs and Laakso graphs.  In the sequel, given metric spaces $(X,d_X)$ and $(Y,d_Y)$, we will let $c_Y(X)$ denote the infimal distortion required to biLipschitzly embed $X$ into $Y$ (it can be infinite).

For $p \in [2,\infty)$, a Banach space $X$ is said to be $p$-convex if it is uniformly convex and the modulus of convexity can be taken to be $\delta(\epsilon) = C\epsilon^p$ for some $C >0$.  Through the deep works of James \cite{james-1,james-2}, Enflo \cite{enflo}, and Pisier \cite{pisier} it is known that all superreflexive spaces can be renormed to be $p$-convex for some $p \geq 2$.  A metrical characterization of $p$-convexity didn't come until 22 years after Bourgain's result.

Given a Markov chain $\{Z_t\}_{t \in \Z}$ on some state space $\Omega$ and $s \in \Z$, we let $\{\tilde{Z}_t(s)\}_{t \in \Z}$ denote the Markov chain on $\Omega$ that equals $Z_t$ when $t \leq s$ and then evolves independently (with respect to the same transition probabilities as $Z_t$) for $t > s$.  Following \cite{LNP}, we say a metric space $(X,d_X)$ is Markov $p$-convex for some $p > 0$ if there exists $\Pi > 0$ so that for every Markov chain $\{Z_t\}_{t \in \Z}$ on $\Omega$ and every $f : \Omega \to X$, we have that
\begin{align}
  \sum_{k=0}^\infty \sum_{t \in \Z} \frac{\E\left[ d(f(Z_t),f(\tilde{Z}_t(t-2^k)))^p \right]}{2^{kp}} \leq \Pi^p \sum_{t \in \Z} \E[d(f(Z_t),f(Z_{t-1}))^p]. \label{markov-convexity-defn}
\end{align}
It was proven in \cite{MN} that a Banach space is Markov $p$-convex if and only if it can be renormed to be $p$-convex.  Thus, Markov $p$-convexity is the metrical characterization of $p$-convexity.

It was shown in \cite{li-carnot} that for each Carnot group $G$, there is some $p < \infty$ for which $G$ is Markov $p$-convex, and an explicit upper bound for the power of convexity was computed in terms of the step of the Carnot group.  However, the bound appears to be far from optimal.  For example, the upper bound it gives for the (continuous) infinite dimensional Heisenberg group $\H_\infty$ is 8.  We will improve upon this result with the following theorem.
\begin{theorem} \label{4-convex}
  $\H_\infty$ is Markov 4-convex.
\end{theorem}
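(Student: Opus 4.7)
The plan is to decompose the Markov 4-convexity inequality along the ``horizontal'' and ``vertical'' directions of the Carnot--Carathéodory metric. Realize $\H_\infty$ as $\ell_2 \oplus \R$ with group law $(x,t)\cdot(x',t') = (x+x',\, t+t'+\omega(x,x'))$ for a bounded antisymmetric bilinear form $\omega$. The ball--box comparison gives
\[
  d((x,t),(x',t'))^{4} \asymp \|x-x'\|_{\ell_2}^{4} + \bigl(t-t'+\omega(x,x')\bigr)^{2}.
\]
Given a Markov chain $\{Z_t\}$ on $\Omega$ and a map $f=(X,T):\Omega\to\H_\infty$, I would bound the left-hand side of \eqref{markov-convexity-defn} (with $p=4$) by the sum of the horizontal contribution $\sum_{k,t} 2^{-4k}\,\E[\|X(Z_t)-X(\tilde Z_t(t-2^k))\|^{4}]$ and the vertical contribution
\[
  S_V := \sum_{k\ge 0}\sum_{t\in\Z} \frac{\E\!\left[\bigl(T(Z_t)-T(\tilde Z_t(t-2^k))+\omega(X(Z_t),X(\tilde Z_t(t-2^k)))\bigr)^{2}\right]}{2^{4k}},
\]
and then treat each separately, both times by an absolute multiple of $\sum_t \E[d(f(Z_t),f(Z_{t-1}))^{4}]$.

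The horizontal piece is the easier one. The coordinate map $z\mapsto X(z)$ is $1$-Lipschitz into $\ell_2$, and Hilbert space is Markov $2$-convex with an absolute constant; the standard monotonicity of Markov $p$-convexity (upgrading $p=2$ to $p=4$ via Jensen's inequality applied to the finite $k$-sum, using the doubling weights $2^{-kp}$) controls the horizontal piece by the desired right-hand side.

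The vertical piece $S_V$ is the heart of the matter. For each fixed $s = t-2^k < t$ the two chains $(Z_r)_{r\ge s}$ and $(\tilde Z_r(s))_{r\ge s}$ start at the common point $Z_s$ and evolve independently conditionally on $\mathcal F_s$. I would expand the twisted increment along each branch as a telescoping sum of one-step contributions: writing $\Delta_r := X(Z_r)-X(Z_{r-1})$ and $\delta_r := T(Z_r)-T(Z_{r-1}) + \omega(X(Z_{r-1}),\Delta_r)$, each edge contributes $|\delta_r| \lesssim d(f(Z_{r-1}),f(Z_r))^{2}$, while the bilinear ``area'' correction coming from rebasing the path at $Z_s$ produces terms $\omega(X(Z_{r-1})-X(Z_s),\Delta_r)$. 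Subtracting the expansion for $(\tilde Z_r(s))$ from that for $(Z_r)$, squaring, and taking conditional expectation given $\mathcal F_s$, all cross-terms of the form $\omega(\Delta_r,\tilde\Delta_{r'})$ collapse to $\omega(\E[\cdot\mid\mathcal F_{r-1}],\E[\cdot\mid\mathcal F_{r'-1}])$ by conditional independence of the two branches, leaving a diagonal ``variance'' expression in the horizontal increments plus a single-path sum of $\delta_r^{2}$.

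The main obstacle I anticipate is producing, from this square-and-cancel, exactly the factor $2^{2k}$ saving that is needed to match the weight $2^{-4k}$. A naive Cauchy--Schwarz on the telescoping sum over $2^k$ edges gives an extra $2^k$ per branch and yields nothing useful; the gain must come entirely from conditional independence, which converts one factor of $2^k$ into a genuine Markov $2$-convex sum for the horizontal coordinate in $\ell_2$, while the other $2^k$ is absorbed by the bound $|\delta_r|\lesssim d(f(Z_{r-1}),f(Z_r))^{2}$ together with a Cauchy--Schwarz on the $\delta_r$'s. This is precisely why the exponent is $p=4=2+2$: one ``2'' is the Markov $2$-convexity of the $\ell_2$-valued horizontal projection, and the other ``2'' is the quadratic scaling of the vertical coordinate against the horizontal one, forced by the homogeneous Heisenberg dilations. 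After summing in $k$ and $t$ and invoking Markov $2$-convexity of $\ell_2$, both contributions assemble into the required bound $\Pi^{4}\sum_t\E[d(f(Z_t),f(Z_{t-1}))^{4}]$ with an absolute constant $\Pi$.
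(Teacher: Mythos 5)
Your proposal takes a genuinely different route from the paper.  The paper never manipulates the Markov chain directly: it proves a deterministic four-point inequality (Proposition~\ref{4-pt-convex}) by first establishing a midpoint inequality with an extra $NH$-term (Proposition~\ref{convex}) and an affine-midpoint shrinking estimate (Lemma~\ref{shrink}), and then simply invokes the proof of Theorem~2.1 of \cite{MN}, where the entire chain-theoretic (``zigzag''/telescoping over dyadic scales) machinery is already carried out in the abstract setting of a metric space satisfying such a four-point inequality.  All the probabilistic work is thus outsourced; the paper's contribution is the algebraic observation that the Koranyi metric satisfies the right pointwise inequality, with the $NH$-term from Proposition~\ref{convex} precisely compensating the loss in Lemma~\ref{shrink}.

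The gap in your argument is in the vertical piece $S_V$.  Rebasing at $Z_s$ and telescoping gives, for the vertical coordinate of $f(Z_s)^{-1}f(Z_t)$, the expression $\sum_{r=s+1}^t \delta_r + \sum_{r=s+1}^t \omega\bigl(X(Z_{r-1})-X(Z_s),\Delta_r\bigr)$, and it is the second ``area'' sum that causes trouble.  Your claim that the cross-terms ``collapse'' under conditional independence is not substantiated: conditional independence of the two branches given $\mathcal F_s$ only factors expectations of products into products of conditional expectations, and those conditional expectations do \emph{not} vanish for a general Markov chain (there is no martingale hypothesis, and the Markov-convexity inequality must hold for every chain and every $f$).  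What remains is not a ``diagonal variance'' controlled by squared increments but a sum of squared conditional drifts of $\omega\bigl(X(Z_{r-1})-X(Z_s),\Delta_r\bigr)$, a quartic expression coupling the running horizontal displacement $X(Z_{r-1})-X(Z_s)$ (which can be of order $2^k$ times a typical increment) with the increment $\Delta_r$; after squaring, a naive bound is of order $2^{4k}$ times the increment fourth power, giving no decay in $k$ and hence a divergence over the $\sim\log$ many effective scales.  Extracting the required $2^{2k}$ gain from an antisymmetric bilinear form $\omega$ has no analogue of the Hilbert-space Pythagorean orthogonality that makes the horizontal piece work, and the sketch does not supply the mechanism.  (The horizontal piece is fine: $\pi$ is $1$-Lipschitz, $\ell_2$ is $2$-convex hence $q$-convex for all $q\ge 2$, so Markov $4$-convex by \cite{MN}.)  This is exactly the difficulty that the paper sidesteps by moving everything to a deterministic four-point inequality where the $NH$-term can be tracked by hand.
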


We will further show that 4 is the optimal power of Markov convexity for the Heisenberg group.  We will do so by showing that a sequence of Laakso-like graphs embed into the usual three dimensional Heisenberg group $\H_1$ with small distortion.
\begin{theorem} \label{laakso-distortion}
  Laakso graphs $\{G_n\}_{n=1}^\infty$ embed into $\H_1$ with distortion $O((\log |G_n|)^{1/4} \sqrt{\log \log |G_n|})$.
\end{theorem}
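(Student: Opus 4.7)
The plan is to construct an explicit embedding $\varphi_n : G_n \to \H_1$ by building a self-similar horizontal curve in $\H_1$ whose planar projection is a fractal curve indexed by the combinatorial structure of the Laakso graph. The key input is that a horizontal curve $\gamma$ in $\H_1$ has Carnot--Carath\'eodory length equal to the Euclidean length of its planar projection, while its vertical coordinate changes by $\tfrac{1}{2}\int_\gamma (x\,dy - y\,dx)$, i.e.\ by the signed area swept by the projection. Thus the distortion problem becomes a careful planar design problem with area-integral side conditions.

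\textbf{Construction.} Label each vertex $v \in G_n$ by its Laakso parameter $t_v \in [0,1]$ and, for each scale $k = 1,\ldots,n$, a symbol $\sigma_k(v) \in \{+,0,-\}$ recording whether $v$ lies on the upper path, a stem, or the lower path of the scale-$k$ bubble containing it. For amplitudes $h_k > 0$ to be chosen, and piecewise linear tent functions $\psi_k$ supported on the relevant scale-$k$ bubble and peaking at its midpoint, set the horizontal projection
\[
\pi \varphi_n(v) = \Bigl( t_v,\ \sum_{k=1}^n \sigma_k(v)\,h_k\,\psi_k(t_v) \Bigr) \in \R^2 .
\]
A canonical path $\gamma_v$ in $G_n$ from the base vertex $0$ to $v$ is uniquely specified by the $\sigma_k(v)$; lifting its image horizontally yields the vertical coordinate $z_v = \tfrac{1}{2}\int_{\gamma_v}(x\,dy - y\,dx)$, and we set $\varphi_n(v) = (t_v, y_v, z_v) \in \H_1$.

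\textbf{Lipschitz estimates.} For the upper bound, each edge at scale $k$ is mapped to a horizontal segment of Euclidean length $O(\sqrt{\ell_k^2 + h_k^2})$, so the upper Lipschitz constant is $O(1)$ provided $h_k \lesssim \ell_k$. For the lower bound, given $u,v \in G_n$, identify the coarsest scale $k^\ast$ at which they disagree. If their horizontal projections are already separated by $\gtrsim h_{k^\ast}$, the Heisenberg distance $d_{\H_1}(\varphi_n(u),\varphi_n(v))$ inherits this separation directly. Otherwise the discrepancy manifests itself only in the vertical coordinate; one estimates $|z_u - z_v|$ via the signed area of the closed loop traced by $\gamma_u\gamma_v^{-1}$, which is essentially the loop around the scale-$k^\ast$ bubble. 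In that second case, $d_{\H_1}(\varphi_n(u),\varphi_n(v)) \gtrsim \sqrt{|z_u - z_v|}$, while the Laakso distance is of order $\ell_{k^\ast}$. Balancing the two regimes by choosing $h_k$ of order $\ell_k\sqrt{\log n}/\sqrt n$ yields the claimed distortion $O(n^{1/4}\sqrt{\log n})$.

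\textbf{Main obstacle.} The principal technical difficulty is controlling the vertical coordinate for pairs of vertices that differ at many scales simultaneously: the $z$-integral accumulates signed-area contributions from every bubble traversed by $\gamma_v$, and contributions from different scales can reinforce or cancel in complicated ways. Establishing a robust lower bound on $|z_u - z_v|$ across the full family of vertex pairs, and correctly isolating the worst-case pair among all $n$ levels, requires careful combinatorial bookkeeping of how the tent functions $\psi_k$ interact through the area integral. The logarithmic factor $\sqrt{\log n}$ in the distortion appears to reflect a union-bound-style loss incurred when maximizing such contributions over the $n$ nested bubble scales rather than a genuine inefficiency of the construction.
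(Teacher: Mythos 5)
Your overall strategy --- build a fractal planar curve indexed by the Laakso structure, lift it horizontally, and argue via the swept-area formula --- is the same strategy the paper uses. However, your specific construction contains a gap that actually prevents the claimed exponent, and you misidentify where the $\sqrt{\log n}$ factor comes from.

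The crucial problem is the ansatz $\pi\varphi_n(v) = \bigl(t_v,\sum_{k=1}^n \sigma_k(v)h_k\psi_k(t_v)\bigr)$, a graph over the $t$-axis. At a single edge of $G_n$, all $n$ tent functions $\psi_1,\dots,\psi_n$ whose bubbles contain that edge contribute a slope of order $h_k/\ell_k$, and these slopes add (they need not cancel). If $h_k/\ell_k$ is constant, the slope of the composite curve over a generic edge is of order $n\cdot(h_k/\ell_k)$, so the Euclidean length of the image of an edge --- and hence the Lipschitz constant of $\varphi_n$ --- is of order $1+n(h_k/\ell_k)$. To keep this $O(1)$ you are forced into $\sum_k h_k/\ell_k\lesssim 1$, which, paired with a parallel-case lower bound of order $(h_{k}/\ell_{k})^{-1/2}$, cannot beat $n^{1/2}$ distortion. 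With your stated choice $h_k/\ell_k\sim\sqrt{\log n / n}$, the Lipschitz constant is $\sim\sqrt{n\log n}$, not $O(1)$ as claimed. The paper avoids this entirely by building the planar curve recursively: at each level the "double diamond" is tilted by an angle $\theta_j$ relative to the \emph{enclosing diamond's own axis} rather than relative to the global $t$-axis, so that every edge of $G_n$ maps to a unit-length segment and $\|f\|_{\mathrm{lip}}=1$ automatically. The price one pays is a multiplicative \emph{shrinkage} of the planar projection by a factor $L\approx\exp\bigl(\tfrac13\sum_j\theta_j^2\bigr)$, controlled by square-summability $\sum\theta_j^2<\infty$ rather than by summability $\sum\theta_j<\infty$. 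This relaxed constraint is exactly what allows the exponent to drop from $1/2$ to $1/4$.

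This also clarifies the source of the $\sqrt{\log n}$: it is not a union bound over scales, but the cost of choosing a single square-summable sequence $\theta_j=\bigl(\sqrt{M+j}\,\log(M+j)\bigr)^{-1}$ (so that $L$ is an absolute constant), whose terminal value satisfies $\theta_n^{-1/2}\sim n^{1/4}\sqrt{\log n}$. Finally, what you flag in your last paragraph as the ``main obstacle'' --- a robust lower bound on the enclosed signed area for parallel pairs across nested scales --- is precisely what the paper spends most of Proposition 2.9 resolving, by decomposing the area as $\Sigma=\Sigma'+T$, proving a lower bound on $\Sigma'$ from a triangle squeezed between two convex hulls, and an upper bound on $T$ by enclosing it in an explicit planar region. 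That part of the argument is not a bookkeeping nuisance one can wave at; it is the technical core, and your proposal does not supply a substitute for it.
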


This good embedding of Laakso graphs will translate to a lower bound for the power of Markov convexity for $\H_1$.
\begin{corollary} \label{optimal-convex}
  $\H_1$ is not Markov $p$-convex for any $p < 4$.
\end{corollary}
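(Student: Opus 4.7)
The plan is a short contradiction argument that combines the upper bound from Theorem \ref{laakso-distortion} with the matching lower bound for embedding Laakso graphs into Markov $p$-convex metric spaces due to Lee, Naor, and Peres. Recall from \cite{LNP} that the Laakso graphs are a canonical obstruction to Markov $p$-convexity: if $(Y,d_Y)$ is Markov $p$-convex with constant $\Pi$, then there exists $c = c(p,\Pi) > 0$ such that
\begin{align*}
  c_Y(G_n) \geq c\, n^{1/p}.
\end{align*}
The proof of this bound proceeds by running the natural random walk on $G_n$ that moves from the left endpoint to the right endpoint, making independent uniform choices at each branching gadget, and then evaluating the left-hand side of \eqref{markov-convexity-defn} against the hypothetical biLipschitz image of $G_n$ in $Y$. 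The self-similar structure of $G_n$ causes each dyadic scale $2^k$ with $k \leq \log_2 n$ to contribute a coherent amount, producing the factor $n^{1/p}$.

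With this input in hand, the deduction is immediate. Assume toward a contradiction that $\H_1$ is Markov $p$-convex for some $p < 4$. Since $\log |G_n| \asymp n$, Theorem \ref{laakso-distortion} gives
\begin{align*}
  c_{\H_1}(G_n) \leq C\, n^{1/4}\sqrt{\log n}
\end{align*}
for some absolute $C > 0$, while the lower bound above forces
\begin{align*}
  c_{\H_1}(G_n) \geq c\, n^{1/p}.
\end{align*}
Combining these two inequalities yields $n^{1/p - 1/4} \leq (C/c)\sqrt{\log n}$, which fails for all sufficiently large $n$ because $1/p - 1/4$ is a strictly positive constant. This contradiction proves Corollary \ref{optimal-convex}.

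There is no substantive obstacle in this step: the entire technical content of the lower bound on $p$ has already been concentrated in the explicit embedding of Theorem \ref{laakso-distortion}, and the corollary is simply the translation of that quantitative embedding into a statement about convexity by invoking the standard Laakso-graph lower bound of \cite{LNP}. The only mild point to verify, if one wished to be careful, is that the exponent $\sqrt{\log n}$ lost in Theorem \ref{laakso-distortion} is subpolynomial and hence cannot close the gap between $n^{1/4}$ and $n^{1/p}$ for any fixed $p < 4$.
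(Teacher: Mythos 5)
Your argument is correct and is essentially the same as the paper's, just with the key computation packaged as a cited black box rather than carried out. The paper runs the natural source-to-sink random walk on $G_m$, pushes it forward via the explicit embedding $f$ of Theorem \ref{laakso-distortion}, and evaluates both sides of \eqref{markov-convexity-defn} directly (it computes $\sum_t \E[d(f(Z_t),f(Z_{t-1}))^p]=6^m$ while the left-hand side is $\gtrsim 6^m\, m^{1-p/4}(\log m)^{-p/2}$), concluding that no finite Markov convexity constant can exist for $p<4$. You instead invoke a general lower bound $c_Y(G_n)\geq c\,n^{1/p}$ for Markov $p$-convex $Y$ and then derive the contradiction from Theorem \ref{laakso-distortion}; this is logically equivalent, and the random-walk sketch you give for the lower bound is exactly the computation the paper performs.

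One small caution: the attribution of the Laakso-graph lower bound to \cite{LNP} is imprecise. \cite{LNP} develops the Markov convexity framework and proves lower bounds for complete binary trees; the $n^{1/p}$ Laakso-graph lower bound for an arbitrary Markov $p$-convex \emph{metric space} is not stated there as a theorem, and the paper itself instead adapts the computation from Proposition 3.1 of \cite{MN}. If you wish to keep the black-box presentation, you should either supply that computation (which the paper does, and which your sketch already outlines) or cite a source where the Laakso version appears explicitly; as written, the citation does not quite support the claimed statement. The rest of the argument, including the asymptotics $\log|G_n|\asymp n$ and the observation that the $\sqrt{\log n}$ loss is subpolynomial and hence cannot close the gap $n^{1/p-1/4}$, is fine.
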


As $\H_1$ admits a biLipschitz embedding into $\H_\infty$, we get that Theorem \ref{laakso-distortion} and Corollary \ref{optimal-convex} also hold for $\H_\infty$ and Theorem \ref{4-convex} holds for $\H_1$.

An immediate further corollary of this corollary is that the Heisenberg group $\H_1$ does not biLipschitz embed into any metric space that is Markov $p$-convex for any $p < 4$.  As Hilbert spaces are metric spaces that are uniformly 2-convex and so Markov 2-convex, we get a new proof of the Pansu-Semmes theorem that the Heisenberg group does not embed into any Euclidean space \cite{pansu,semmes}.

Theorem \ref{4-convex} and Corollary \ref{optimal-convex} say that what 2 is for Euclidean space in terms of the Pythagorean theorem, 4 is the natural analogue in the Heisenberg group.  A similar phenomenon occurs in the context of the analyst's traveling salesman problem in the Heisenberg group where, again, the natural power to look at is 4 \cite{li-schul-1,li-schul-2} while the natural analogue in Euclidean space is 2.

We can use this embedding of Laakso graphs to show the following lower bound for distortion of balls of the discrete Heisenberg group into metric spaces that are highly convex.
\begin{corollary} \label{discrete-nonembedding}
  Let $B(n)$ denote balls of radius $n$ of the discrete Heisenberg group $\H(\Z)$ and $X$ a metric space that is Markov $p$-convex.  Then there exists some $C > 0$ so that
  \begin{align}
    c_X(B(n)) \geq C \frac{(\log n)^{\frac{1}{p}-\frac{1}{4}}}{\sqrt{\log \log n}}. \label{quantitative-nonembed}
  \end{align}
\end{corollary}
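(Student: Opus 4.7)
The plan is to combine Theorem \ref{laakso-distortion} with the standard Laakso-graph distortion lower bound for Markov $p$-convex targets, after discretizing the continuous embedding so that it lands inside a ball of the integer Heisenberg lattice. The gap between the upper bound $D_n = O((\log |G_n|)^{1/4}\sqrt{\log\log |G_n|})$ furnished by Theorem \ref{laakso-distortion} and the obstruction forced by Markov $p$-convexity is exactly what yields the quantitative non-embedding of $B(N)$.

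First I would invoke the Laakso-graph analogue of the Mendel--Naor tree lower bound from \cite{MN}: applying the Markov $p$-convexity inequality (\ref{markov-convexity-defn}) to the canonical symmetric random walk on $G_n$ gives a constant $c = c(\Pi) > 0$ such that
\begin{equation*}
  c_X(G_n) \geq c (\log |G_n|)^{1/p}.
\end{equation*}
This is by now a standard consequence of (\ref{markov-convexity-defn}) and follows along the same lines as the tree bound in \cite{MN}.

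Next, Theorem \ref{laakso-distortion} provides an embedding $\phi_n : G_n \to \H_1$ of distortion $D_n$, which I would rescale so that each edge of $G_n$ has image of Carnot--Carath\'eodory length at least $1$. Because $\H(\Z)$ is a cocompact lattice in $\H_1$ whose word metric is biLipschitz equivalent to the restriction of the Carnot--Carath\'eodory metric on scales $\gtrsim 1$, snapping each $\phi_n(v)$ to a nearest point of $\H(\Z)$ yields $\psi_n : G_n \to \H(\Z)$ with distortion still $O(D_n)$. Since $\operatorname{diam}(G_n)$ is polynomial in $|G_n|$ and $D_n$ is polylogarithmic, the image $\psi_n(G_n)$ is contained in some ball $B(N)$ with $\log N \asymp \log |G_n|$.

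Composing $\psi_n$ with a near-optimal embedding $B(N) \hookrightarrow X$ realizes $G_n$ inside $X$ with distortion at most a constant multiple of $c_X(B(N)) \cdot D_n$, so combining with the first step gives
\begin{equation*}
  c_X(B(N)) \gtrsim \frac{(\log |G_n|)^{1/p}}{D_n} \asymp \frac{(\log |G_n|)^{1/p - 1/4}}{\sqrt{\log\log |G_n|}} \asymp \frac{(\log N)^{1/p - 1/4}}{\sqrt{\log\log N}},
\end{equation*}
which is exactly (\ref{quantitative-nonembed}). The main technical issue is the first step, namely verifying the clean Laakso-graph lower bound from (\ref{markov-convexity-defn}); the choice of random walk and the bookkeeping of the two sides of the inequality must be done carefully, but the argument is essentially contained in the framework of \cite{MN,LNP}. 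The remaining ingredients --- lattice snapping, a size--diameter comparison for $G_n$, and chaining distortions --- are routine.
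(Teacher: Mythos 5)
Your proposal is correct and follows essentially the same route as the paper: the paper likewise composes the (rescaled) double-diamond embedding $f:G_n\to\H$ with the quasi-isometry $g:\H\to\H(\Z)$ of \eqref{quasi-isometry} and with a near-optimal map $F:B(N)\to X$, and then applies the Markov $p$-convexity inequality to the random walk of Corollary~\ref{optimal-convex} on $G_n$, which is precisely your two-step factorization (intrinsic Laakso bound plus distortion chaining) written out in one go. One small caution: rescaling so that edge images have CC-length $\geq 1$ is not quite enough to survive the lattice snapping --- you need the lower Lipschitz constant to beat the additive quasi-isometry constant $c_1$ (the paper takes it to be $c_0(1+c_1)$), but this is the routine adjustment you flag and does not affect the asymptotics.
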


This result should be contrasted with \cite{lafforgue-naor} where it was shown that if $X$ is a $p$-convex Banach space, then $c_X(B(n)) \gtrsim (\log n)^{1/p}$, an asymptotically sharp estimate.  That result requires that the target space be a Banach space but gives lower bounds for all powers of convexity, whereas Corollary \ref{discrete-nonembedding} holds for general metric space targets, but only gives meaningful distortion lower bounds for $p < 4$, which is to be expected given Theorem \ref{4-convex}.  Theorem 7.5 of \cite{li-carnot} shows that the target spaces in the latter case are not a subset of those in the former.  It should be noted that the Heisenberg group seems to be especially hard to embed into Banach spaces as it does not even embed into $L_1$, a Banach space that is not uniformly convex (it doesn't even have the Radon-Nikodym property) \cite{cheeger-kleiner-1,cheeger-kleiner-2,ckn}.

Until now, all known distortion bounds for embeddings of diamond/Laakso graphs $G_n$ and binary trees $B_n$ into known non-doubling Markov $p$-convex spaces---namely $p$-convex Banach spaces---have had the same asymptotics, namely $(\log |G_n|)^{1/p}$ and $(\log \log |B_n|)^{1/p}$, respectively \cite{matousek,johnson-schechtman}.   Such bounds precisely match the bounds one would get from using the Markov $p$-convexity inequality (the computation is essentially done in the proof of Corollary \ref{optimal-convex}).  We have seen that $\H_\infty$ is Markov 4-convex and the $(\log |G_n|)^{1/4}$ distortion bound still holds for Laakso graph embeddings into $\H_\infty$.  Thus, it seems reasonable to expect that the distortion of binary trees would be $(\log \log |B_n|)^{1/4}$ as suggested by 4-convexity.  However, we will show the following theorem.

\begin{theorem} \label{tree-nonembedding}
  There exists some absolute constant $C > 0$ so that any embedding of $\{B_n\}_{n=1}^\infty$ into $\H_\infty$ has distortion at least $C\sqrt{\log \log |B_n|}$.
\end{theorem}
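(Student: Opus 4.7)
My approach is to reduce the lower bound to Bourgain's classical result that $c_{\ell^2}(B_m) \gtrsim \sqrt{\log m}$, exploiting the fact that $\H_\infty$ admits a canonical $1$-Lipschitz group homomorphism $\pi : \H_\infty \to \ell^2$ onto its abelianization.

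Normalize an embedding $f : B_m \to \H_\infty$ so that $d_{B_m}(u,v) \leq d_\H(f(u),f(v)) \leq D \, d_{B_m}(u,v)$, and set $g := \pi \circ f$, which is automatically $D$-Lipschitz. The Heisenberg metric decomposes up to constants as $d_\H(x,y)^2 \asymp \|\pi(x)-\pi(y)\|_{\ell^2}^2 + |\omega(x,y)|$, where $\omega$ records the central/vertical component of the Heisenberg product and is antisymmetric and bilinear in the horizontal coordinates. The core of the plan is to establish a Hilbert-type four-point ``parallelogram with error'' inequality for $g$ on the quadruples $(w,u,v_\ell,v_r)$ appearing in Bourgain's iteration---$w$ an ancestor of $u$, and $v_\ell,v_r$ the two children of $u$. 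The hope is that the antisymmetry of $\omega$ causes the vertical contributions along the two symmetric branches $u \to v_\ell$ and $u \to v_r$ to approximately cancel in the combination $2\|g(v_\ell)-g(u)\|_{\ell^2}^2 + 2\|g(v_r)-g(u)\|_{\ell^2}^2 - \|g(v_\ell)-g(v_r)\|_{\ell^2}^2$, leaving behind only an error controlled by the horizontal diameters of the two sibling subtrees.

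Granted such an approximate parallelogram inequality, I would telescope Bourgain's argument down all $m$ levels of $B_m$: the main term forces the usual midpoint-defect accumulation yielding $D^2 \gtrsim \log m \asymp \log\log|B_m|$, while the accumulated error terms sum to at most $O(D^2 m)$ and can be absorbed into the upper side of the inequality without destroying the iteration. The main obstacle is establishing the cancellation in $\omega$ precisely enough: for this I would expand the bilinear form in coordinates, show that its contributions on $u^{-1}v_\ell$ and $u^{-1}v_r$ differ by an area bounded by the product of the two sibling horizontal diameters via Cauchy--Schwarz in $\ell^2$, and verify that this is indeed absorbable. This is exactly the step where the Hilbert-by-Hilbert central extension structure of $\H_\infty$ plays an essential role, and explains why the naive guess $(\log\log|B_m|)^{1/4}$ coming from Markov $4$-convexity is not optimal.
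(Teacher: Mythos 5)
Your reduction to Bourgain's $\ell_2$ lower bound via $g := \pi\circ f$ has a fundamental gap: $g$ need not be noncontracting, and the rest of the argument cannot recover from this. The noncontraction of $f$ only gives
\begin{align*}
  d_{B_m}(u,v)^4 \;\leq\; d_\H(f(u),f(v))^4 \;=\; \|g(u)-g(v)\|^4 + NH\bigl(f(u)^{-1}f(v)\bigr)^4,
\end{align*}
so the lower bound may be carried entirely by the vertical (non-horizontal) part, with $\|g(v_\ell)-g(v_r)\|$ as small as you like for a sibling pair. The ``cancellation from antisymmetry of $\omega$'' you invoke does not control this: the vertical coordinate of $f(v_\ell)^{-1}f(v_r)$ is $t_{v_\ell} - t_{v_r} + \tfrac12\omega\bigl(g(v_\ell),g(v_r)\bigr)$, and the free term $t_{v_\ell}-t_{v_r}$ is at the disposal of the embedding; no amount of Cauchy--Schwarz on $\omega$ constrains it. In other words, nothing prevents an embedding from separating sibling subtrees primarily in the vertical direction, in which case the $\ell_2$-midpoint defect of $g$ vanishes at that branching and Bourgain's iteration has nothing to accumulate. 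This is precisely the configuration that makes the naive estimate give only $(\log\log|B_m|)^{1/4}$ (as 4-convexity alone would predict) and explains why the $\sqrt{\log\log|B_m|}$ bound requires more.

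The paper's proof works differently and addresses this directly. It follows Matou\v{s}ek's scheme: first extract via Ramsey (Lemma 5 of \cite{matousek}) a copy of $B_h$ on which the distortion profile of $f$ depends only on levels, and via the path-embedding lemma (here Lemma~\ref{l:metric-diff}) a $\delta$-fork with three collinear points $(1+\delta)$-biLipschitz to $P_3$. The crucial new step, which has no counterpart in your proposal, is the pigeonhole argument built on Lemma~\ref{symplectic-collapse}: among the $2^{\ell/2}$ descendants $\{y_i\}$ of the fork midpoint, after binning central coordinates, \emph{some} pair must have small symplectic pairing $|\omega(a_i,a_j)|$ -- this uses the Hilbert-space structure of the horizontal layer, because one cannot pack exponentially many vectors of bounded norm all pairwise far in the symplectic form. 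Only for such a pair is $NH(f(y_i')^{-1}f(y_j'))$ forced to be small relative to $d(f(y_i'),f(y_j'))$, and only then does the $\delta^{1/2}$-fork-collapse estimate of Lemma~\ref{collapse-convex} apply (for a general $\delta$-fork the collapse could be merely $\delta^{1/4}$). Without this combinatorial selection of a ``near-horizontal'' sibling pair, the $\delta^{1/2}$ rate -- and hence the $\sqrt{\log\log|B_m|}$ bound -- is unavailable. Your proposal, which tries to run the parallelogram inequality uniformly on all sibling quadruples, cannot reproduce this because the inequality is simply false for the vertically separated ones.
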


This then means that Markov convexity does not say much about quantitative bounds on embedding of binary trees into Carnot groups, which can be thought of as the nonabelian analogues of Banach spaces.  This lower bound is sharp up to constants as $\ell_2$ embeds biLipschitzly into $\H_\infty$ and it is known that $c_{\ell_2}(B_n) \leq C\sqrt{\log \log |B_n|}$ for some other $C > 0$ \cite{bourgain}.

Clearly, such a bound cannot be derived from Markov 4-convexity of $\H_\infty$ and so we must proceed via another route.  If one looks in the literature, one finds that \cite{matousek} provides another method of computing distortion lower bounds for embedding of binary trees into $p$-convex Banach spaces.  It is this approach that we will use---with some nontrivial modifications.  We briefly describe the strategy of \cite{matousek}.  There, it was shown by metric differentiation that if $f$ is a Lipschitz embedding of a large enough binary tree into a $p$-convex Banach space, then there exists a subgraph of the tree on which $f$ sends to a $\delta$-fork (the terminology will be reviewed in Section 3).  The result of \cite{matousek} then comes from the fact that tips of $\delta$-forks in $p$-convex Banach spaces must collapse by a factor of $\delta^{1/p}$.

As is, this method does not work for $\H_\infty$ because the analogue of the fork collapse lemma for $\H_\infty$ collapses the tips of the fork by a factor of $\delta^{1/4}$, which would only give a lower bound of $(\log \log |B_n|)^{1/4}$.  However, it turns out that the tips of a $\delta$-fork in $\H_\infty$ must be in a special configuration in order to see the $\delta^{1/4}$-collapse.  Otherwise, they would see a $\delta^{1/2}$ collapse.  One can modify the metric differentiation technique of \cite{matousek} to get a large connected collection of $\delta$-forks (a $\delta$-broom if you will) and then show using the pigeonhole principle that the $\delta$-subforks associated to the $\delta$-broom cannot all be configured to see the $\delta^{1/4}$ collapse.

\subsection{Preliminaries}
The (continuous) $2n+1$ Heisenberg group of dimension $2n+1$ is the Lie group $\H_n = (\R^n \times \R^n \times \R,\cdot)$ with the group product
\begin{align*}
  (x,y,z) \cdot (x',y',z') = \left(x + x', y + y', z + z' + \frac{1}{2} (x \cdot y' - x' \cdot y) \right).
\end{align*}
One can also define the infinite dimensional Heisenberg group as $(\ell_2 \times \ell_2 \times \R,\cdot)$ where $\ell_2$ is usual real sequence space and the Hilbert inner product is used to get the same group product.  Note that $x \cdot y' - x' \cdot y$ is the canonical symplectic form.

It can be immediately verified that the Heisenberg groups are not abelian and the origin is the identity.  We call the center, which is $\{(0,0,t) : t \in \R\}$, the {\it vertical axis}.

For finite $n$, there exists a natural path metric on $\H_n$ that we will define as such.  We define $\Delta$ to be the left invariant subbundle of the tangent bundle by setting $\Delta_0$ to be the 1-codimensional plane spanned by $\R^n \times \R^n \times \{0\}$-plane and using the smoothness of the group multiplication to pushforward $\Delta_0$ to every point $x \in \H_n$.  Similarly, we can endow $\Delta$ with a left-invariant scalar product $\{\langle \cdot,\cdot \rangle_x\}_{x \in \H_n}$.  Then we can define the Carnot-Carath\'eodory metric between two points $x,y \in \H_n$ as
\begin{multline*}
  d_{cc}(x,y) := \inf \left\{ \int_a^b \langle \gamma'(t),\gamma'(t) \rangle_{\gamma(t)} dt : \gamma \in C^1([a,b];\H_n),\right. \\
  \left.\gamma(a) = x, \gamma(b) = y, \gamma'(t) \in \Delta_{\gamma(t)}\right\}.
\end{multline*}
The continuous paths $\gamma : I \to \H_n$ for which $\gamma'(t) \in \Delta_{\gamma(t)}$ are called {\it horizontal paths}.  A special case of Chow's theorem (see {\it e.g.} \cite{montgomery}) states that between two points in $\H_n$ there always exists a horizontal path and so $d_{cc}$ is a finite metric on $\H_n$.  Because we are taking the Riemannian length over a subclass of curves, this geometry is sometimes also called sub-Riemannian geometry.

It is well known that if a curve $(\gamma_x,\gamma_y,\gamma_z) : I \to \H_n$ is piecewise horizontal, then
\begin{multline}
  \gamma_z(b) - \gamma_z(a) - \frac{1}{2} \left( \gamma_x(a) \cdot \gamma_y(b) - \gamma_x(b) \cdot \gamma_y(a) \right)  \\
  = \frac{1}{2} \int_a^b \gamma_x(t) \cdot \gamma_y'(t) - \gamma_y(t) \cdot \gamma_x'(t) ~dt. \label{swept-area}
\end{multline}
When $n=1$, the vertical change in terms of group multiplication is equal to the algebra area swept by $(\gamma_x,\gamma_y)$ when viewed as a curve in $\R^2$.  On the other hand, given a curve $\gamma_0$ in $\R^2$, one can use the identity \eqref{swept-area} to lift $\gamma_0$ to a horizontal curve $\gamma$ in $\H_1$.  Notice that $\gamma$ is unique up to translation in the $z$-coordinate.

An important feature of the Heisenberg group is that for each $\lambda > 0$, there exists an automorphism
\begin{align*}
  \delta_\lambda : \H_n &\to \H_n \\
  (x,y,z) &\mapsto (\lambda x,\lambda y, \lambda^2 z)
\end{align*}
that scales the metric, {\it i.e.} $d_{cc}(\delta_\lambda(x),\delta_\lambda(y)) = \lambda d_{cc}(x,y)$.  To see this fact, one simply needs to check that the Jacobian of $\delta_\lambda$ scales $\langle \cdot, \cdot \rangle_x$ by $\lambda$.

We now introduce another metric on $\H_n$ which makes sense even for $n = \infty$.  This metric has the advantage that distances between points can be computed directly from coordinates.  Let
\begin{align*}
  N : \H_n &\to \R \\
  (x,y,t) &\mapsto \left( (|x|^2+|y|^2)^2 + t^2 \right)^{1/4}
\end{align*}
denote the Koranyi norm.  We can use it to define a left-invariant metric on $\H_n$ as
\begin{align*}
  d(x,y) := N(x^{-1}y), \qquad \forall x,y \in \H_n.
\end{align*}
It is known that this is indeed a metric \cite{cygan} ({\it i.e.} it satisfies the triangle inequality) and is biLipschitz equivalent to the Carnot-Carath\'eodory metric.  Note that $\delta_\lambda$ also scales $d$.  As all the results of this paper are given up to multiplicative constants, we see that proving the results for the Koranyi metric then proves them for the Carnot-Carath\'eodory metric also.  Thus, we will work with the Koranyi metric from now on.

Rotations of each canonical symplectic plane are isometric automorphisms of $\H_n$.  This can easily be seen by remembering that such rotations preserve the canonical symplectic form and then looking at the formulas for the Koranyi norm and group multiplication.

Let $\tilde{\pi} : \H_n \to \H_n$ denote the map $\tilde{\pi}(x,y,z) = (x,y,0)$.  It should be noted that this is {\it not} a homomorphism.  For $g \in \H_n$ define
\begin{align*}
 NH(g) = d(\tilde{\pi}(g),g).
\end{align*}
Thus, $NH$ quantifies how non-horizontal an element of $\H_n$ is by measuring its distance to the horizontal element ``below'' it.  We will also let
\begin{align*}
  \pi : \H_n &\to \R^n \times \R^n \\
  (x,y,z) &\mapsto (x,y)
\end{align*}
be the homomorphism from $\H_n$ to $\R^n \times \R^n$.  It is easily verifiable from looking at the Koranyi metric that this is 1-Lipschitz.

The discrete Heisenberg group $\H(\Z)$ is the finitely generated discrete group $\H(\Z) = (\Z^3,\cdot)$ where the group product is
\begin{align*}
  (a,b,c) \cdot (a',b',c') = (a + a', b + b', c + c' + ab').
\end{align*}
The group can be shown to be generated by the elements $(\pm 1,0,0)$ and $(0,\pm 1,0)$.  The metric on $\H(\Z)$ is then the word metric associated to this finite set of generators.  It is known that $\H_1$ embeds quasi-isometrically into $\H(\Z)$.  That is, there exist $c_0,c_1 > 0$ and a map $f : \H_1 \to \H(\Z)$ so that
\begin{align}
  \frac{1}{c_0} d(x,y) - c_1 \leq d(f(x),f(y)) \leq c_0 d(x,y) + c_1, \qquad \forall x,y \in \H. \label{quasi-isometry}
\end{align}

\section{Markov convexity and nonembeddability of $\H$}
\subsection{Upper bound}
For $a,b \in \H_\infty$, let $\frac{a+b}{2}$ denote the midpoint of the affine line segment between $a$ and $b$ when they are viewed as points in $\ell_2 \times \ell_2 \times \R$.  We can then also define $\frac{a}{2} := \frac{a+0}{2}$.  As group translations in $\H$ are affine maps of $\ell_2 \times \ell_2 \times \R$, we see that the affine midpoint between two points is preserved by the group multiplication.  It is also clearly preserved by rotations of canonical symplectic planes.  While it is true that affine midpoints are not preserved under the dilation homomorphism $\delta_\lambda$, we will never use dilation in this section.

We have the following convexity inequality for the Heisenberg group.

\begin{proposition} \label{convex}
 For $u,v,w \in \H_\infty$, we have
 \begin{align*}
  \frac{1}{2} \left(d(u,v)^4 + d(v,w)^4\right) \geq \left( \frac{d(u,w)}{2} \right)^4 + d\left(\frac{u+w}{2},v\right)^4 + 2^{-4} NH(u^{-1}w)^4
 \end{align*}
\end{proposition}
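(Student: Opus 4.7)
The plan is to use the invariances of the problem to reduce to a canonical position and then verify the inequality by a direct algebraic computation in Koranyi coordinates. Since left translation on $\H_\infty$ is an affine metric isometry and preserves affine midpoints (as remarked in the paper), I may assume $v$ is the identity. Writing $u = (a, s)$ and $w = (b, t)$ with $a, b \in \ell_2 \times \ell_2$ and $s, t \in \R$, the Koranyi formula together with the group law makes every term explicit: for instance $d(u,v)^4 = |a|^4 + s^2$, $d(v,w)^4 = |b|^4 + t^2$, $d(u,w)^4 = |b - a|^4 + \zeta^2$ with $\zeta := t - s - \tfrac{1}{2}\omega(a,b)$ and $\omega$ the canonical symplectic form, $d\bigl(\tfrac{u+w}{2},v\bigr)^4 = \tfrac{1}{16}|a+b|^4 + \tfrac{1}{4}(s+t)^2$, and $NH(u^{-1}w)^4 = \zeta^2$ since $\tilde\pi(u^{-1}w) = (b-a,0)$ so that the Koranyi distance to $u^{-1}w$ depends only on the vertical coordinate $\zeta$.

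After substituting, I would split LHS $-$ RHS into a horizontal contribution and a vertical contribution. The horizontal contribution is handled via the parallelogram-type identity $|a+b|^4 + |a - b|^4 = 2(|a|^2 + |b|^2)^2 + 8\langle a, b\rangle^2$ and reduces to $\tfrac{1}{8}(3|a|^4 + 3|b|^4 - 2|a|^2|b|^2) - \tfrac{1}{2}\langle a,b\rangle^2$. For the vertical contribution, the identity $\tfrac{s^2+t^2}{2} - \tfrac{(s+t)^2}{4} = \tfrac{(s-t)^2}{4}$ combined with completing the square in $t-s$ against $\zeta^2/8$ turns it into $\tfrac{1}{8}\bigl(t - s + \tfrac{\omega(a,b)}{2}\bigr)^2 - \tfrac{\omega(a,b)^2}{16}$. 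Discarding the nonnegative square, the proposition reduces to the purely algebraic inequality
\begin{equation*}
  6|a|^4 + 6|b|^4 - 4|a|^2|b|^2 - 8\langle a,b\rangle^2 - \omega(a,b)^2 \;\geq\; 0.
\end{equation*}

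The key input is a sharp joint bound on the symmetric and antisymmetric pairings on $\ell_2 \times \ell_2$. Identifying $(p,q) \in \ell_2 \times \ell_2$ with $p + iq \in \ell_2(\C)$, a direct expansion shows that the Hermitian inner product of the complexifications of $a$ and $b$ equals $\langle a, b\rangle - i\,\omega(a,b)$, so complex Cauchy-Schwarz yields $\langle a,b\rangle^2 + \omega(a,b)^2 \leq |a|^2|b|^2$. Combined with the real Cauchy-Schwarz bound $\langle a,b\rangle^2 \leq |a|^2|b|^2$, this gives $8\langle a,b\rangle^2 + \omega(a,b)^2 \leq 8|a|^2|b|^2$, which reduces the displayed inequality to $6(|a|^2 - |b|^2)^2 \geq 0$ and closes the proof.

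The step I expect to be the main obstacle is identifying the correct joint Cauchy-Schwarz bound: using only $\omega(a,b)^2 \leq |a|^2|b|^2$ or only $\langle a,b\rangle^2 \leq |a|^2|b|^2$ separately leaves a deficit that does not absorb the $-\omega(a,b)^2/16$ term arising from completing the square in the vertical piece, and it is the Hermitian/symplectic identification that provides the coupling needed to handle both pairings simultaneously.
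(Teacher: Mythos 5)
Your proof is correct, and it follows a genuinely different reduction from the paper's, so the comparison is worth making. Both proofs begin by left-translating to normalize one point and expanding the Koranyi distance, but the normalizations and the key inequality differ. The paper sets $u=0$, applies Jensen's inequality (convexity of $t\mapsto t^2$) together with the parallelogram identity on the horizontal parts, and then bounds the symplectic term via $\tau = \omega(v'-w'/2,\,w') \leq |v'-w'/2|\,|w'|$, which uses only antisymmetry of $\omega$ plus real Cauchy--Schwarz; after that it reduces to a one-variable scalar inequality. You instead set $v=0$, expand directly with the biquadratic parallelogram identity $|a+b|^4 + |a-b|^4 = 2(|a|^2+|b|^2)^2 + 8\langle a,b\rangle^2$, complete the square in the vertical coordinate (discarding a manifest nonnegative term), and reduce to the explicit quadratic-form positivity
\begin{align*}
  6|a|^4 + 6|b|^4 - 4|a|^2|b|^2 - 8\langle a,b\rangle^2 - \omega(a,b)^2 \geq 0,
\end{align*}
which you close using the Hermitian (complex) Cauchy--Schwarz bound $\langle a,b\rangle^2 + \omega(a,b)^2 \leq |a|^2|b|^2$. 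Your observation at the end is a real one: applying the two real Cauchy--Schwarz bounds $\langle a,b\rangle^2 \leq |a|^2|b|^2$ and $\omega(a,b)^2 \leq |a|^2|b|^2$ separately gives only $8\langle a,b\rangle^2 + \omega(a,b)^2 \leq 9|a|^2|b|^2$, which fails to absorb $-4|a|^2|b|^2$; the coupled Hermitian bound is what makes the deficit exactly $6(|a|^2-|b|^2)^2$. The paper sidesteps needing this coupling by the Jensen-plus-recentering manipulation (working with $v'-w'/2$ and $w'$ rather than $a$ and $b$). Each route has a clear virtue: the paper's hides the slack and finishes with a tiny scalar inequality, whereas yours makes the final positivity completely transparent and pinpoints the structural role of the complex Hilbert-space structure of $\ell_2 \times \ell_2 \cong \ell_2(\C)$.
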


\begin{proof}
 By a translation, we may suppose $u = (0,0,0)$.  Let $v = (x,y,z)$, $w = (r,s,t)$.  We let $v' = (x,y)$ and $w' = (r,s)$ be vectors in $\ell_2 \times \ell_2$.  Let $\tau = y \cdot r - x \cdot s$.  We have by the parallelogram identity and Jensen's inequality that
 \begin{align}
   \frac{|v'|^4 + |w'-v'|^4}{2} &\geq \left( \frac{|v'|^2 + |w'-v'|^2}{2} \right)^2 \notag \\
   &= \left|v' - \frac{w'}{2} \right|^2 + \frac{1}{2^4} |w'|^2 + \frac{1}{2} |w'|^2 \left| v' - \frac{w'}{2} \right|^2 \notag \\
   &\geq \left|v' - \frac{w'}{2} \right|^2 + \frac{1}{2^4} |w'|^2 + \frac{1}{2} \tau^2. \label{e:convex-2}
 \end{align}
 In the last inequality, we used the fact that $\tau$ represents the symplectic form applied to $v'$ and $w'$ which we can bound as
 \begin{align*}
   \tau = \omega(v',w') = \omega(v'- w'/2,w') \leq \left|v' - \frac{w'}{2} \right| |w'|.
 \end{align*}
 
 We have the following
 \begin{align*}
  d(u,v)^4 &= |v'|^4 + z^2, \\
  d(v,w)^4 &= |w'-v'|^4 + \left(t - z + \frac{1}{2}\tau \right)^2, \\
  d(u,w)^4 &= |w'|^4 + t^2, \\
  d\left( \frac{u+w}{2},v \right)^4 &= \left|v' - \frac{w'}{2} \right|^4 +  \left( \frac{1}{2}t - z + \frac{1}{4} \tau \right)^2, \\
  NH(u^{-1}w)^4 &= t^2.
 \end{align*}
 Using \eqref{e:convex-2}, we see that it suffices to prove the following inequality
 \begin{align*}
   \frac{z^2 + (t - z + \tau/2)^2}{2} + \frac{1}{2} \tau^2 \geq \frac{t^2}{2^3} + \left( \frac{1}{2} t - z + \frac{1}{4} \tau \right)^2.
 \end{align*}
 By applying the parallelogram identity in $\R$ on the left hand side, we further reduce to proving the following inequality
 \begin{align*}
   \frac{1}{4} \left(t + \frac{\tau}{2}\right)^2 + \frac{1}{2} \tau^2 \geq \frac{t^2}{2^3}.
 \end{align*}
 An easy application of the parallelogram identity shows that this inequality is true.
\end{proof}

\begin{lemma} \label{shrink}
 If $u,v,w \in \H_\infty$, then
 \begin{align*}
  d(u,v)^4 \leq 32 \left( d \left( \frac{u + w}{2}, \frac{v + w}{2} \right)^4 + NH(u^{-1}w)^4 + NH(v^{-1}w)^4 \right).
 \end{align*}
\end{lemma}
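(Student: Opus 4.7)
The plan is to reduce to $w = 0$ by left-translation and then carry out a direct coordinate computation. Every term in the inequality is invariant under replacing $(u, v, w)$ by $(gu, gv, gw)$: the metric $d$ is left-invariant, affine midpoints are preserved by group products (as noted in Section 2.1), and $u^{-1}w = (gu)^{-1}(gw)$. Moreover $NH(g) = NH(g^{-1})$, since both equal $|z|^{1/2}$ when $g = (p, q, z)$. Choosing $g = w^{-1}$ reduces the claim to
\[
d(u, v)^4 \leq 32\left(d\bigl(\tfrac{u}{2}, \tfrac{v}{2}\bigr)^4 + NH(u)^4 + NH(v)^4\right)
\]
for arbitrary $u, v \in \H_\infty$.

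For the coordinate computation, write $u = (p_u, q_u, z)$, $v = (p_v, q_v, t)$, and set $\Delta_p = p_v - p_u$, $\Delta_q = q_v - q_u$, $A = |\Delta_p|^2 + |\Delta_q|^2$, $C = t - z$, and the symplectic-form quantity $\sigma = q_u \cdot \Delta_p - p_u \cdot \Delta_q$. Applying the group law and the Koranyi norm gives
\[
d(u, v)^4 = A^2 + (C + \sigma/2)^2, \quad d\bigl(\tfrac{u}{2}, \tfrac{v}{2}\bigr)^4 = \tfrac{A^2}{16} + \tfrac{1}{4}(C + \sigma/4)^2,
\]
together with $NH(u)^4 = z^2$ and $NH(v)^4 = t^2$. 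The claim therefore reduces to the scalar inequality
\[
(C + \sigma/2)^2 \leq A^2 + 8(C + \sigma/4)^2 + 32(z^2 + t^2).
\]

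Expansion and rearrangement recast this as $A^2 + 7C^2 + 3C\sigma + \tfrac{\sigma^2}{4} + 32(z^2 + t^2) \geq 0$. Completing the square in $C$ gives $7C^2 + 3C\sigma + \tfrac{\sigma^2}{4} = 7\bigl(C + \tfrac{3\sigma}{14}\bigr)^2 - \tfrac{\sigma^2}{14}$, so the task becomes establishing
\[
\frac{\sigma^2}{14} \leq A^2 + 7\bigl(C + \tfrac{3\sigma}{14}\bigr)^2 + 32(z^2 + t^2).
\]

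The main obstacle is bounding $\sigma$. The naive Cauchy-Schwarz estimate $|\sigma| \leq |(p_u, q_u)| \sqrt{A}$ is useless, since $|(p_u, q_u)|$ is not controlled by the right-hand side. The resolution is the algebraic identity $\sigma = \tfrac{14}{3}\bigl(C + \tfrac{3\sigma}{14}\bigr) - \tfrac{14}{3}\,C$, which upon applying $(a-b)^2 \leq 2a^2 + 2b^2$ and $C^2 \leq 2(z^2 + t^2)$ yields
\[
\frac{\sigma^2}{14} \leq \frac{28}{9}\bigl(C + \tfrac{3\sigma}{14}\bigr)^2 + \frac{56}{9}(z^2 + t^2).
\]
Since $\tfrac{28}{9} < 7$ and $\tfrac{56}{9} < 32$, this closes the inequality with $A^2$ left as slack.
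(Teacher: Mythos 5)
Your proof is correct and follows essentially the same route as the paper: reduce to $w = 0$ by left-translation, compute the Koranyi metric in coordinates, drop the horizontal $A^2$ terms (where $32/16 = 2 \geq 1$), and verify a scalar inequality in the central increment and the symplectic form. The paper dispatches the resulting two-variable quadratic inequality by noting its minimum over the symplectic variable is $6a^2$ (a positive-definite form), while you split $\sigma$ as a linear combination of $C + 3\sigma/14$ and $C$ and apply the elementary bound $(a-b)^2 \leq 2a^2 + 2b^2$ together with $C^2 \leq 2(z^2+t^2)$; these are just two elementary ways of closing the same quadratic positivity check.
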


\begin{proof}
 Again, by a translation, we may suppose that $w = 0$ and $u = (r,s,t)$ and $v = (x,y,z)$.  Let $\tau = s \cdot x - r \cdot y$.  Then
 \begin{align*}
  d(u,v)^4 &= (|r-x|^2 + |s-y|^2)^2 + \left( z - t + \frac{1}{2} \tau \right)^2, \\
  d\left( \frac{u}{2}, \frac{v}{2} \right)^4 &= \left( \left|\frac{r-x}{2}\right|^2 + \left|\frac{s-y}{2}\right|^2\right)^2 + \left( \frac{z - t}{2} + \frac{1}{8} \tau \right)^2, \\
  NH(u)^4 &= t^2, \\
  NH(v)^4 &= z^2.
 \end{align*}
 Note that the first term on the right hand sides of the first and second lines are multiples of each other (by a factor of $\frac{1}{16}$), so we may ignore them.  We also have that
 \begin{align*}
  2\left( t^2 + z^2 \right) \geq (z - t)^2.
 \end{align*}
 Taking this into account, it then suffices to prove that
 \begin{align*}
  \left( z - t + \frac{1}{2} \tau \right)^2 \leq 8\left( (z - t)^2 + \left( z - t + \frac{1}{4} \tau \right)^2 \right).
 \end{align*}
 Letting $a = z - t$ and $b = \tau$, we are reduced to showing that
 \begin{align}
  8 \left( a^2 + \left( a + \frac{1}{4} b \right)^2 \right) - \left( a + \frac{1}{2} b \right)^2 \geq 0.
 \end{align}
 An elementary calculus exercise shows that the left hand side of the above inequality takes a minimum value of $6a^2$.
\end{proof}

\begin{proposition} \label{4-pt-convex}
 For every $x,y,z,w \in \H_\infty$,
 \begin{align*}
  \frac{1}{2} \left( 2d(x,y)^4 + d(y,w)^4 + d(y,z)^4 \right) \geq \frac{d(x,w)^4 + d(x,z)^4}{2^4} + \frac{d(z,w)^4}{512}.
 \end{align*}
\end{proposition}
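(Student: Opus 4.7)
My plan is to combine two applications of the convexity inequality (Proposition \ref{convex}) with a single application of the shrinking bound (Lemma \ref{shrink}). First I would apply Proposition \ref{convex} to the triple $(u,v,w) = (x,y,w)$ and then to $(u,v,w) = (x,y,z)$, and add the two resulting inequalities. The left-hand side becomes exactly $\frac{1}{2}(2d(x,y)^4 + d(y,w)^4 + d(y,z)^4)$, while the right-hand side yields $\frac{d(x,w)^4 + d(x,z)^4}{2^4}$ together with two ``midpoint-to-$y$'' terms $d(\frac{x+w}{2},y)^4 + d(\frac{x+z}{2},y)^4$ and two NH terms $2^{-4}(NH(x^{-1}w)^4 + NH(x^{-1}z)^4)$.

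The remaining task is to prove that these extra midpoint and NH terms dominate $\frac{d(z,w)^4}{512}$. The key idea is that Lemma \ref{shrink}, applied to the pair $(z,w)$ with $x$ in the role of the auxiliary third point, converts precisely this combination of quantities into an upper bound for $\frac{1}{32}d(z,w)^4$. To set up this application I first use the triangle inequality, sharpened via convexity of $t\mapsto t^4$ (which yields $d(A,B)^4 \leq 8(d(A,y)^4 + d(y,B)^4)$), to obtain $d(\frac{x+w}{2},y)^4 + d(\frac{x+z}{2},y)^4 \geq \frac{1}{8} d(\frac{x+w}{2},\frac{x+z}{2})^4$. Since $\frac{1}{8} \geq \frac{1}{16}$, the full combination is bounded below by $\frac{1}{16}$ times the right-hand side of Lemma \ref{shrink}, which then produces $\frac{d(z,w)^4}{16 \cdot 32} = \frac{d(z,w)^4}{512}$ as desired.

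The one point requiring care is that Proposition \ref{convex} produces NH terms of the form $NH(x^{-1}w)$ and $NH(x^{-1}z)$, whereas Lemma \ref{shrink} involves $NH(w^{-1}x)$ and $NH(z^{-1}x)$. Since $NH(g)$ depends only on the absolute value of the vertical coordinate of $g$, and inversion in $\H_\infty$ merely negates that vertical coordinate, the two pairs coincide. Beyond this bookkeeping, I do not expect any substantive obstacle---the proposition is essentially a clever linear combination of the two earlier results, stitched together by the fourth-power triangle inequality.
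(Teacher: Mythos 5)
Your proposal is correct and follows essentially the same route as the paper: two applications of Proposition~\ref{convex} to the triples $(x,y,z)$ and $(x,y,w)$, summed, with the leftover midpoint and $NH$ terms controlled by the fourth-power triangle inequality $8(a^4+b^4)\geq(a+b)^4$ and then Lemma~\ref{shrink} applied to $(z,w)$ with $x$ as the auxiliary point. The paper simply normalizes $x=0$ first, which makes the $NH(g^{-1})=NH(g)$ bookkeeping you note implicit, but the argument is otherwise identical, including the $512 = 16\cdot 32$ accounting.
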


\begin{proof}
 We may first suppose that $x = 0$.  Applying Proposition \ref{convex} to the pairs $x,y,z$ and $x,y,w$, we get (still writing $x$ to keep things clear)
 \begin{align*}
  \frac{1}{2} \left(d(x,y)^4 + d(y,z)^4\right) &\geq \left( \frac{d(x,z)}{2} \right)^4 + d\left( \frac{z}{2},y\right)^4 + 2^{-4} NH(x^{-1}z)^4, \\
  \frac{1}{2} \left(d(x,y)^4 + d(y,w)^4\right) &\geq \left( \frac{d(x,w)}{2} \right)^4 + d\left( \frac{w}{2},y\right)^4 + 2^{-4} NH(x^{-1}w)^4.
 \end{align*}
 Adding the inequalities together, we have that it suffices to prove that
 \begin{multline*}
  512 \left( d\left( \frac{z}{2}, y \right)^4 + d\left( \frac{w}{2},y \right)^4 + 2^{-4} (NH(x^{-1}z)^4 + NH(x^{-1}w)^4) \right) \\
  \geq d(z,w)^4.
 \end{multline*}
 Note that
 \begin{align*}
  8 \left( d\left( \frac{z}{2},y \right)^4 + d\left( \frac{w}{2},y \right)^4 \right) \geq \left( d\left( \frac{z}{2},y \right) + d \left( \frac{w}{2},y \right) \right)^4 \geq d\left( \frac{w}{2}, \frac{z}{2} \right)^4.
 \end{align*}
 Thus, we finish the proof by appealing to Lemma \ref{shrink}.
\end{proof}

\begin{proof}[Proof of Theorem \ref{4-convex}]
  The proof of Theorem 2.1 of \cite{MN} shows that Markov 4-convexity follows directly from a four point 4-convexity inequality of the form given by Proposition \ref{4-pt-convex}.
\end{proof}

\subsection{Lower bound}
In this section, we will let $\H$ denote the three dimensional Heisenberg group $\H_1$.

\begin{figure}
\centering
  \scalebox{0.6}{
    \includegraphics{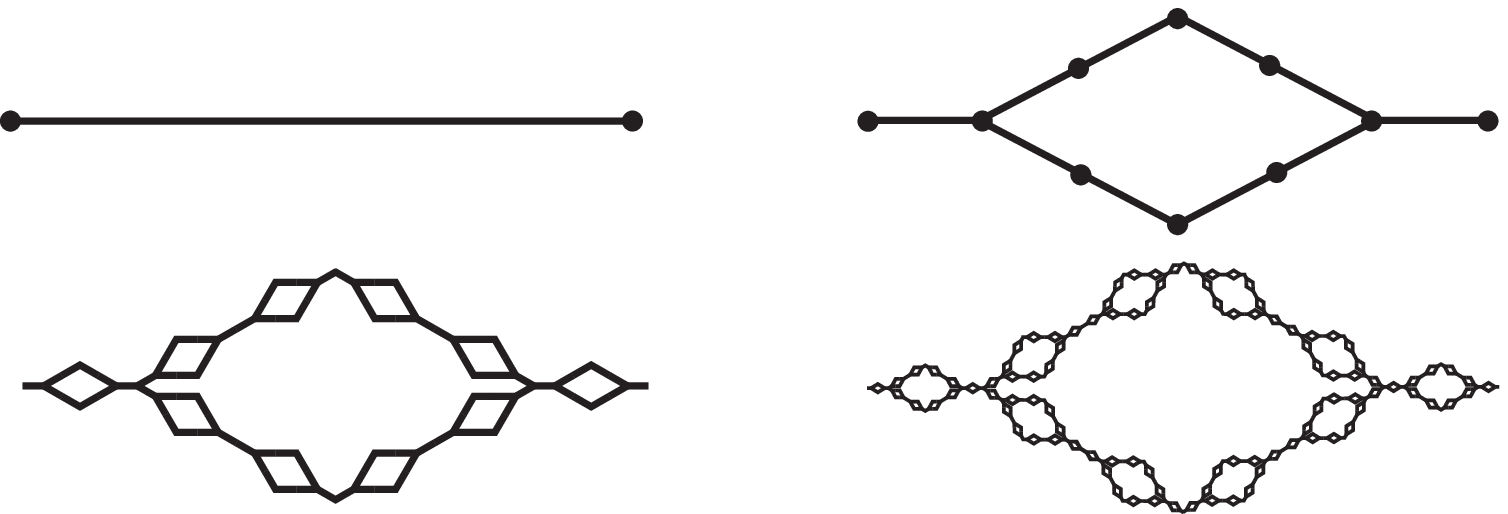}
    \setlength{\unitlength}{1mm}
    \begin{picture}(0,0)
      \put(-164,39){$G_0$:}
      \put(-77,39){$G_1$:}
      \put(-164,12){$G_2$:}
      \put(-77,12){$G_3$:}
    \end{picture}
  }
  \caption{The first four Laakso graphs \label{g:laakso}}
\end{figure}
We now prove that the Markov convexity upper bound shown in the previous subsection is tight and use it to derive Theorem \ref{laakso-distortion}.  Laakso graphs were described in \cite{laakso,lang-plaut}.  We will define the graphs $\{G_i\}_{i=0}^\infty$ as follows.  The first stage $G_0$ is simply an edge and $G_1$ is pictured as in Figure \ref{g:laakso}.  To get $G_k$ once $G_{k-1}$ is constructed, we replace each edge of $G_{k-1}$ with a copy of $G_1$.  We will choose not to rescale the metric so the diameters of $G_k$ will be $6^k$.  By abuse of terminology, we will still call these graphs Laakso graphs.

Given $G_n$, we say $G$ is an {\it unscaled copy} of $G_k$ in $G_n$ if it is isometric to $G_k$ and each edge of $G$ has length 1.  If we do not require that each edge of $G$ has length 1, then we say $G$ is an {\it isometric copy} of $G_k$.  Note that each edge of $G$, while not necessarily having length 1, does have constant length as it is isometric to $G_k$.  We will let $G_{n,k}$ denote the unique largest isometric copy of $G_k$ in $G_n$.  We will call two points in $G_{k,1} \subset G_k$ that have edge degree 3 {\it fork points}.

Note that each Laakso graph has only two vertices with edge degree one, which we will denote the {\it terminals}.  We will choose one arbitrarily to call the {\it source} $s$ and the other the {\it sink} $t$.  This imposes a direction on each edge and a partial ordering on the graph (and all subgraphs).  We will choose a partial ordering so that geodesics going from source to sink are increasing.  Given a Laakso subgraph $G$ of $G_n$, we let $s(G)$ and $t(G)$ denote the source and sink of the subgraph $G$, chosen so that the induced partial subordering agrees with the partial ordering from $G_n$.  We say two points in $G_n$ are in series if there exists a geodesic from $s(G_n)$ to $t(G_n)$ so that passes through both points.  Otherwise, we say those two points are in parallel.

For each decreasing sequence of positive numbers $\{\theta_j\}_{j=1}^\infty$, we can define an embedding of the Laakso graphs $f : G_n \to \H$ by first defining the image of $\pi \circ f$ in $\R^2$.  On the subgraph $G_{n,1}$, the source-sink geodesic on the top of $G_{n,1}$ gets mapped to one particular piecewise linear curve from $\pi(f(s(G_n)))$ to $\pi(f(t(G_n)))$ as shown where all the line segments in the piecewise linear curve are of the same length to be determined.  Likewise, the source-sink geodesic on the bottom will get mapped to the other, again all line segments will be of the same length.  We will let $st(G_n)$ denote the line segment in $\R^2$ connecting $\pi(f(s(G_n)))$ to $\pi(f(t(G_n)))$.  We specify that the angle the diamonds make with $st(G_n)$ is $\frac{\theta_1}{2}$.  The picture looks like a double diamond with two line segments jutting out.  See Figure \ref{g:double-diamond}.  Here, each of the marked angles of the diamonds on the right have angle $\theta$ (the same then naturally holds for the angle on the opposite end of the diamonds).  Thus, $st(G_n)$, the line going through the diamonds horizontally, bisects these angles.

Now suppose we have defined how $\pi \circ f$ acts on $G_{k,1}$ in an unscaled copy of $G_k \subseteq G_n$.  Each edge of $G_{k,1}$ are the terminals of an unscaled copy of $G_{k-1}$ in $G_k$.  We then define $\pi \circ f$ on $G_{k-1,1}$ using the double diamond embedding of Figure \ref{g:double-diamond} except using $st(G_{k-1})$ as the axis.  We will also specify that the angle the diamonds make with the axis will be $\frac{1}{2} \theta_{n-k+2}$.  Continuing this construction, we get that $\pi \circ f$ is eventually defined for all of $G_n$.  Then if we specify that $f$ maps each edge of $G_n$ to a horizontal line segment of length 1, we get that $\pi \circ f$ uniquely determines how $f$ embeds all of $G_n$ up to translation and rotation.

\begin{figure}
\centering
  \scalebox{0.7}{
    \includegraphics{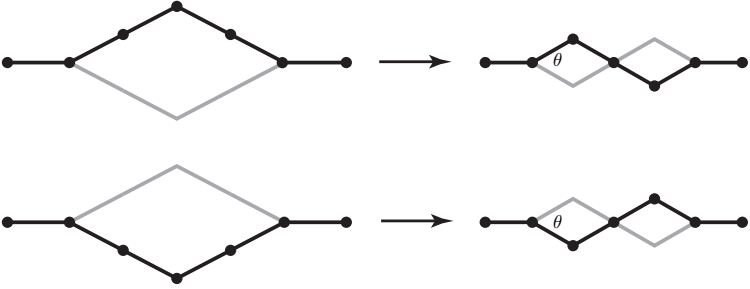}
    \setlength{\unitlength}{1mm}
    \begin{picture}(0,0)
      \put(-39,41){$\theta$}
      \put(-39,10.5){$\theta$}
    \end{picture}
  }
  \caption{The double diamond embedding \label{g:double-diamond}}
\end{figure}

Given two points $a,b \in \R^2$, we will let $\overline{ab}$ denote the line segment connecting $a$ to $b$.  We will let $\tilde{G}_n$ denote $\pi(f(G_n))$, the projection of the image of $G_n$ to $\R^2$.  Given a point $x \in G_n$, we will use $\tilde{x}$ as shorthand to denote $\pi(f(x))$.  Thus, $\tilde{s}(G),\tilde{t}(G)$ denote the points in $\tilde{G}_n$ corresponding to the terminal points $s(G),t(G)$.  Given Laakso subgraphs $G$ and $G'$, we will say that $\theta$ is the angle between them if the angles defined by $st(G)$ and $st(G')$ differ by $\theta$.  It follows easily from the construction of $f$ that if $G$ and $G'$ are Laakso subgraphs in series that share a terminal point, then the angle between them is at most $\theta_1$.

\begin{lemma} \label{short-cut}
  Let $P$ be a source-sink geodesic path in $G_n$.  The closed path in $\R^2$ that goes from $\tilde{s}(G_n)$ to $\tilde{t}(G_n)$ via $\pi \circ f(P)$ and then goes straight back to $\tilde{s}(G_n)$ encloses a region of zero signed area.
\end{lemma}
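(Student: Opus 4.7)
The plan is to prove this by induction on $n$, exploiting the recursive self-similar structure of both the Laakso graphs and the embedding $f$. I prove the slightly stronger statement that the conclusion holds for every isometric copy of $G_k$ appearing inside some $G_n$ and every source-sink geodesic through it; this harmless strengthening is what the induction actually produces.

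The base case is $k = 1$, where all of the geometric content resides. By construction, $\pi \circ f(G_1)$ is the double diamond of Figure~\ref{g:double-diamond}: piecewise linear arcs consisting of equal-length segments meeting $st(G_1)$ at the half-angle $\theta_1/2$, arranged symmetrically about $st(G_1)$. The prescription is designed so that any source-sink geodesic $P$ of $G_1$ traces out a curve whose arc lying above $st(G_1)$ is congruent via reflection to its arc lying below, so the signed area enclosed with $\overline{\tilde s(G_1)\tilde t(G_1)}$ cancels to $0$. Verifying this cancellation directly from the diamond geometry is the main obstacle; once in hand, the inductive step is clean bookkeeping.

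For the inductive step, let $G$ be an isometric copy of $G_k$ and let $G^\sharp \subseteq G$ be the top-level isometric copy of $G_1$ obtained when $G_k$ was built from $G_{k-1}$. Each edge $e_i$ of $G^\sharp$ is realized in $G$ as an isometric copy $H_i$ of $G_{k-1}$. A source-sink geodesic $P$ in $G$ descends to a source-sink path $\bar P$ in $G^\sharp$, and on each edge traversed it restricts to a source-sink geodesic $P_i$ in the corresponding $H_i$. Additivity of signed area over the subdivision then gives
\begin{equation*}
\mathrm{Area}\bigl(\pi \circ f(P), \overline{\tilde s(G)\tilde t(G)}\bigr) = \sum_{i} \mathrm{Area}\bigl(\pi \circ f(P_i), \overline{\tilde s(H_i)\tilde t(H_i)}\bigr) + \mathrm{Area}\bigl(Q, \overline{\tilde s(G)\tilde t(G)}\bigr),
\end{equation*}
where $Q$ is the polygonal curve obtained by concatenating the chords $\overline{\tilde s(H_i)\tilde t(H_i)}$ along $\bar P$. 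Each summand vanishes by the inductive hypothesis applied to $H_i$, and $Q$ is precisely the image of $\bar P$ under the one-level straight-chord embedding of $G^\sharp$, so the residual area vanishes by the base case applied to $G^\sharp$. This closes the induction.
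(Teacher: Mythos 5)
Your proof is correct and follows essentially the same route as the paper: induct on the scale of the Laakso subgraph, decompose a source-sink geodesic of $G_k$ into the six source-sink geodesics it induces through the copies of $G_{k-1}$ lying over the edges of the top-level $G_1$, use additivity of signed area to reduce the residual to the chord polygon through the terminals of those copies, and close with the $G_1$ base case. You make explicit the strengthened inductive hypothesis --- that the statement holds for every isometric copy of $G_k$ --- which the paper states only in the remark following the lemma but implicitly uses in the same way. One small phrasing quibble on the base case: the lobes of a single geodesic's image above and below $st(G_1)$ sit over disjoint subintervals of $st(G_1)$, so they are not literally related by a reflection across $st(G_1)$; the cancellation comes from the fact that the two triangular lobes are congruent but traversed with opposite orientation relative to the chord (equivalently, one lies above and one below). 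This is exactly the geometric content the paper waves away with ``trivial for $G_1$,'' so you and the paper are at comparable levels of rigor there.
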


\begin{remark}
  The point of this lemma is that if we have a closed path in $\R^2$ that travels along the image of a source-sink geodesic of some isometric copy $G_k \subset G_n$ under $\pi \circ f$, then we can replace this portion with just $st(G_k)$ without changing the signed area.
\end{remark}

\begin{proof}
  The lemma is trivial for $G_1$.  Now suppose we have proven the lemma for $G_k$ up to $k = n-1$ and let $f : G_n \to \H$ be the double diamonds mapping.  Then any source-sink geodesic can be thought of as the concatenation of six source-sink geodesics through unscaled copies of $G_{n-1}$, each one with terminals in $G_{n,1}$.  Thus, we can use the inductive assumption and the previous remark to get that the signed area of the closed path needed is the same as the signed area of the the corresponding path via straight lines through points of $G_{n,1}$.  Then the statement holds again by the case of $G_1$.
\end{proof}

\begin{lemma} \label{st-distortion}
  Let $\{\theta_j\}_{j=1}^\infty$ be any decreasing sequence of positive numbers and let
  \begin{align*}
    L_{\ell,m} := \frac{6^n}{\prod_{j=\ell}^m (2+4\cos\theta_j)}.
  \end{align*}
  If $f : G_n \to \H$ is a double diamond embedding with angles $\{\theta_j\}$, then for any unscaled copy of $G_k$ in $G_n$, we have that
  \begin{align*}
    d(f(s(G_k)),f(t(G_k))) = \frac{6^k}{L_{n-k+1,n}}.
  \end{align*}
\end{lemma}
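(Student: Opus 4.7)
The plan is to reduce the Heisenberg distance $d(f(s(G_k)),f(t(G_k)))$ to the Euclidean length $|st(G_k)|$ using Lemma~\ref{short-cut}, and then to evaluate $|st(G_k)|$ by unrolling the double-diamond recursion.

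For the reduction, fix a source-to-sink geodesic $P$ in the unscaled copy of $G_k$, and set $p := f(s(G_k))$ and $q := f(t(G_k))$. Since $f$ sends every edge of $G_n$ to a horizontal unit segment, $f|_P$ is a piecewise horizontal curve in $\H$. Starting from the group-law expression $(p^{-1}q)_z = q_z - p_z - \frac{1}{2}(p_xq_y - q_xp_y)$, substituting \eqref{swept-area} for $q_z-p_z$ along $f|_P$, and comparing to the line integral of $\frac{1}{2}(x\,dy - y\,dx)$ along the straight segment from $\tilde t(G_k)$ back to $\tilde s(G_k)$, one obtains
\[
(p^{-1}q)_z \;=\; A,
\]
where $A$ is the signed area of the closed planar loop formed by $\pi\circ f(P)$ followed by that closing segment. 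By Lemma~\ref{short-cut} via the remark that follows it (applied to the unscaled $G_k$, which inherits its own recursive double-diamond embedding from $f$), we have $A=0$. Therefore $p^{-1}q$ lies in the horizontal plane, and
\[
d(f(s(G_k)),f(t(G_k))) \;=\; |\tilde s(G_k) - \tilde t(G_k)| \;=\; |st(G_k)|.
\]

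For the evaluation of $|st(G_k)|$, I induct on $k$. The base case is $|st(G_0)|=1$, since $G_0$ is a single edge mapped to a unit horizontal segment. For the inductive step, the unscaled $G_k$ sits at depth $n-k$, and its substructure $G_{k,1}$ is drawn by construction as a double diamond along the axis $st(G_k)$ with angle $\theta_{n-k+1}$. Each of the six axial segments of this double diamond is the straight chord between the source and sink of an unscaled sub-$G_{k-1}$ (at depth $n-k+1$), so has Euclidean length $|st(G_{k-1})|$. Summing the axial projections---two horizontal end-segments, plus four angled segments each contributing $\cos\theta_{n-k+1}\cdot|st(G_{k-1})|$---gives
\[
|st(G_k)| \;=\; \bigl(2+4\cos\theta_{n-k+1}\bigr)\,|st(G_{k-1})|.
\]
Telescoping yields $|st(G_k)| = \prod_{j=n-k+1}^n(2+4\cos\theta_j)$, which rearranges to $6^k/L_{n-k+1,n}$.

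The main bookkeeping point is matching each $\theta_j$ to the correct depth of recursion; beyond that, the proof rests entirely on Lemma~\ref{short-cut} and an elementary axial-projection identity for the double diamond, with no fresh analytic input required.
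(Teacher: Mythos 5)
Your proof is correct and takes the same route as the paper: apply Lemma~\ref{short-cut} (via the remark following it) to conclude that the non-horizontal part of $f(s(G_k))^{-1}f(t(G_k))$ vanishes, so the Koranyi distance collapses to the planar span $|st(G_k)|$, and then evaluate $|st(G_k)|$ by the one-level double-diamond recursion and telescope. Your expansion of the $NH=0$ step through \eqref{swept-area} and the signed area of the closing loop is a fuller version of the paper's one-line invocation of Lemma~\ref{short-cut}, and starting the induction at $k=0$ rather than the paper's $k=1$ is an immaterial choice.

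One thing you should make explicit. Your telescoped product $\prod_{j=n-k+1}^{n}(2+4\cos\theta_j)$ equals $6^k/L_{n-k+1,n}$ only if the numerator in the definition of $L_{\ell,m}$ is $6^{m-\ell+1}$, not the $6^n$ that is printed; as printed, the lemma would force $|st(G_0)|=6^{-n}$ rather than $1$, and it would also break the bound $L_{n-k+1,n}\leq L$ that Proposition~\ref{first-level-distortion} relies on. So the $6^n$ appears to be a misprint and your reading is the intended one, but a careful write-up should flag that you are reading $L_{\ell,m}$ this way. Correspondingly, the recursion factor $(2+4\cos\theta_{n-k+1})$ you derive is the one consistent with that corrected reading (and with the geometry), whereas the factor printed in the paper's inductive step, $6/(2+4\cos\theta_{n-k+1})$, reproduces neither formula; your version is the right fix. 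Finally, since the construction stipulates that each diamond edge makes angle $\theta_{n-k+1}/2$ with the axis $st(G_k)$, the exact axial projection is $\cos(\theta_{n-k+1}/2)$ rather than $\cos\theta_{n-k+1}$; you have simply followed the paper's own $\cos\theta_j$ convention, so this is a shared bookkeeping question rather than an error on your part, and it has no effect on the asymptotics used downstream.
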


\begin{proof}
  Lemma \ref{short-cut} gives that $NH(f(s(G_k))^{-1}f(t(G_k))) = 0$.  When $NH(x^{-1}y) = 0$, we have that $d(f(x),f(y)) = |\tilde{x} - \tilde{y}|$ so it suffices then to show that
  \begin{align*}
    |st(G_k)| = \frac{6^k}{L_{n-k+1,n}}.
  \end{align*}
  The case when $k = 1$ is straightforward trigonometry in $\R^2$.  Suppose we have shown the statement for up to $k-1$ and consider an unscaled copy of $\tilde{G}_k$ in $\tilde{G}_n$.  Then $\tilde{G}_k$ can be expressed as ten unscaled copies of $\tilde{G}_{k-1}$ glued together via using the edge structure of $\tilde{G}_{k,1}$.  By induction, $|st(G_{k-1})| = 6^{k-1} L_{n-k+2,n}^{-1}$.  Thus, we can use the $k=1$ case except the angle is now $\theta_{n-k+1}$ and the edge lengths are $|st(G_{k-1})|$ to get that
  \begin{align*}
    |st(G_k)| = \frac{6|st(G_{k-1})|}{2 + 4 \cos\theta_{n-k+1}} = \frac{6^k}{L_{n-k+1,n}}.
  \end{align*}
\end{proof}

Let $x \in G_{n,1}$ and $y \in G_n$.  We will write out a specific path from $x$ to $y$.  Let $p_1 = x$.  If we choose $k$ so that $6^k \leq d_{G_n}(x,y) < 6^{k+1}$ then there exists some $p_2 \in G_n$ so that $d(p_2,x) = 6^k$ and
\begin{align*}
  d(x,y) = d(x,p_2) + d(p_2,y).
\end{align*}
If there are multiple choices for $p_2$, choose one arbitrarily.  Another way to say this is that $p_2$ and $x$ form the terminals of the largest unscaled copy of $G_k$ on the geodesic path from $x$ to $y$.  Now suppose $p_j$ has been defined.  If $p_j = y$, then we stop.  Otherwise, let $k$ be so that $6^k \leq d(p_j,y) < 6^{k+1}$ and choose a $p_{j+1}$ (breaking ties arbitrarily if there are multiple options) so that $d(p_j,p_{j+1}) = 6^k$ and
\begin{align*}
  d(p_j,y) = d(p_j,p_{j+1}) + d(p_{j+1},y).
\end{align*}
Note that this process will eventually stop, giving us a sequence of points $\{p_i\}_{i=1}^N$ connecting $x$ to $y$.  We will call this path $\{p_i\}_{i=1}^N$ a {\it developed path} from $x$ to $y$.  There is no guarantee of uniqueness for developed paths.  This can be thought of as a kind of base 6 numbering system of terminal geodesics.  Notice that if $p_i$ is a point in the developed path from $p_1$ to $p_N$, then a valid developed path of $p_1$ to $p_i$ is exactly $\{p_1,...,p_i\}$ and a valid developed path from $p_i$ to $p_N$ is exactly $\{p_i,...,p_N\}$.

If we let $a_i = \log_6 d(p_i,p_{i+1})$, we would get that $p_i$ and $p_{i+1}$ are terminal points for unscaled copies of $G_{a_i}$ on the geodesic path from $x$ to $y$.  Notice that $a_i$ is a nonincreasing sequence of numbers and each distinct number in $\{a_i\}_{i=1}^{N-1}$ can only appear at most five times.  We then let $\lambda(y;x)$ denote the number of distinct numbers in $\{a_i\}$.  Thus, if $\lambda(y;x)$ is small, then the developed path from $x$ to $y$ does not have to change scales many times (although the changes in scales it makes can be large).  We have that the angle that the line segment $\overline{\tilde{p}_i\tilde{p}_{i+1}}$ makes with $st(G_n)$ is at most
\begin{align}
  \sum_{j=1}^{\lambda(p_{i+1};p_1)} \theta_j \leq \lambda(p_{i+1};p_1) \theta_1. \label{developed-angle}
\end{align}
It is also easy to see that
\begin{align*}
  d(x,y) = \sum_{i=1}^{N-1} d(p_i,p_{i+1}) = \sum_{i=1}^{N-1} 6^{a_i} \leq 5 \frac{6^{a_1}}{1 - \frac{1}{6}} \leq 6^{a_1 + 1}.
\end{align*}

\begin{lemma} \label{convex-hull}
  $\tilde{G}_n$ is contained in the closed convex hull $C$ of $\tilde{G}_{n,1}$.
\end{lemma}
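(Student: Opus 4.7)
The natural approach is induction on $n$. For the base case $n=1$, one has $\tilde{G}_1 = \tilde{G}_{1,1}$, so the inclusion is tautological.

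For the inductive step, I would decompose $\tilde{G}_n = \tilde{G}_{n,1} \cup \bigcup_e \tilde{G}_{n-1}^{(e)}$, where $e$ ranges over the edges of $\tilde{G}_{n,1}$ and $\tilde{G}_{n-1}^{(e)}$ denotes the image under $\pi \circ f$ of the unscaled copy of $G_{n-1}$ whose terminals are the endpoints of $e$. By the recursive definition of $f$, each $\tilde{G}_{n-1}^{(e)}$ is itself a double-diamond embedding with axis $e$ and (still decreasing) angle sequence $\theta_2,\theta_3,\ldots$. Applying the inductive hypothesis to each piece gives $\tilde{G}_{n-1}^{(e)} \subseteq \Conv(\tilde{G}_{n-1,1}^{(e)}) =: C^{(e)}$, a small hexagonal region straddling $e$.

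It then suffices to show that $C^{(e)} \subseteq C$ for every edge $e$ of $\tilde{G}_{n,1}$. Since $C$ is convex, this reduces to checking that every vertex of $\tilde{G}_{n-1,1}^{(e)}$ lies in $C$. The axis-type vertices already lie on $e \subseteq C$, so the only nontrivial ones are the four top/bottom mid-vertices of the next-level double diamond around $e$; these are obtained by perpendicular offsets from $e$ whose magnitude is proportional to $|e|\sin(\theta_2/2)$.

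The main obstacle is the planar verification that each such offset remains below the bounding lines of $C$. Using the top/bottom reflection symmetry of the construction together with the left/right palindromic symmetry of $\tilde{G}_{n,1}$, this reduces to a small number of cases (axis-aligned edges like $\tilde{s}\tilde{f}_1$, outer-slanted edges like $\tilde{f}_1\tilde{m}_1^{\mathrm{top}}$, and inner-slanted edges like $\tilde{m}_1^{\mathrm{top}}\tilde{f}_2$). Each case boils down to a two-parameter trigonometric inequality comparing the perpendicular offset at level $n-1$ with the signed distance from $e$ to the relevant bounding line of $C$. Schematically these all take the form $\sin(\theta_1/2)\bigl(1+2\cos(\theta_2/2)\bigr) \geq \sin(\theta_2/2)\bigl(1+\cos(\theta_1/2)\bigr)$, which, given the decreasing-angle hypothesis $\theta_2 \leq \theta_1$, follows from the monotonicity of $\sin$ and $\cos$ on $[0,\pi/2]$ (noting that equality at $\theta_1=\theta_2$ already leaves a positive slack $\sin(\theta/2)\cos(\theta/2)$, and the slack only grows as $\theta_2$ decreases relative to $\theta_1$).
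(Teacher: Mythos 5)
Your proof is correct and follows essentially the same inductive approach as the paper; you induct on the depth $n$ (with the hypothesis quantified over all decreasing angle sequences), while the paper inducts upward on the subgraph scale $k$ inside a fixed $\tilde{G}_n$, but both reduce to the aperture comparison of Remark \ref{aperature}: the inner convex hulls have strictly smaller terminal aperture since the angle sequence is decreasing. Where the paper invokes ``elementary planar geometry'' at that step, you unpack it as a vertex-by-vertex check culminating in a trigonometric inequality, which is a reasonable way to make the same reduction explicit.
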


\begin{remark} \label{aperature}
  If the diamonds of $\tilde{G}_{k,1}$ have an angle of $\theta$, then the aperatures of the convex hull of $\tilde{G}_{k,1}$ at the terminals of $\tilde{G}_{k,1}$ have angles of
  \begin{align*}
    2\tan^{-1} \left( \frac{\sin \frac{\theta}{2}}{1 + \cos \frac{\theta}{2}} \right) = \frac{\theta}{2}.
  \end{align*}
\end{remark}

\begin{proof}
  We will claim by induction that all unscaled copies of $\tilde{G}_k$ in $\tilde{G}_n$ are contained in the convex hulls of $\tilde{G}_{k,1}$.  It is straightforward to see that all unscaled copies of $\tilde{G}_1$ in $\tilde{G}_n$ are contained in their closed convex hulls.  Now suppose we have proven this for up to $k-1$.  Consider an unscaled copy of $\tilde{G}_k$.  It is composed of 10 copies of unscaled copies of $\tilde{G}_{k-1}$, each of which is contained in the convex hulls of their respective $\tilde{G}_{k-1,1}$.  We have that the convex hulls of $\tilde{G}_{k-1,1}$ make an angle $\frac{1}{2}\theta_{n+2-k}$ at the terminals.  As the convex hull of $\tilde{G}_{k,1}$ makes an angle of $\frac{1}{2}\theta_{n+1-k} > \frac{1}{2}\theta_{n+2-k}$ at the terminals of $\tilde{G}_k$, we get by elementary planar geometry that each of the convex hulls of $\tilde{G}_{k-1,1}$ is contained in the convex hull of $\tilde{G}_{k,1}$.  This finishes the proof.
\end{proof}

Given Laakso subgraphs $G$ and $G'$ of $G_n$, we can define the angle the convex hulls of $\tilde{G}$ and $\tilde{G'}$ make as just the angles $G$ and $G'$ make.

\begin{lemma} \label{fork-collapse}
  Let $z$ be a fork point in $G_{n,1}$.  If $x,y \in G_n$ are parallel points in different unscaled copies of $G_{n-1}$ such that $d(z,x) = d(z,y)$, then
  \begin{align*}
    |\tilde{x} - \tilde{y}| \leq 12 \theta_1 d(z,x).
  \end{align*}
\end{lemma}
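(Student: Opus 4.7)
Set up Euclidean coordinates with $\tilde z$ at the origin and $st(G_n)$ along the positive $x$-axis, so that the two copies of $G_{n-1}$ meeting at $z$ have axes $u_\pm=(\cos(\theta_1/2),\pm\sin(\theta_1/2))$.

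My plan is to localize the two images and then bound $|\tilde x-\tilde y|$ coordinate by coordinate. By Lemma~\ref{convex-hull} combined with Remark~\ref{aperature}, $\tilde x$ lies in the convex hull of the upper $\tilde G_{n-1,1}$, hence in the infinite cone at $\tilde z$ of aperture $\theta_2/2$ about $u_+$; symmetrically, $\tilde y$ lies in the corresponding cone about $u_-$. Since each edge of $G_n$ is mapped by $\pi\circ f$ to a unit Euclidean segment, the radii $r_1:=|\tilde z-\tilde x|$ and $r_2:=|\tilde z-\tilde y|$ are both at most $r:=d(z,x)=d(z,y)$.

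For the perpendicular component, the $y$-coordinates of $\tilde x$ and $\tilde y$ have opposite signs, each of absolute value at most $r\sin(\theta_1/2+\theta_2/4)$, so
\[
|(\tilde x-\tilde y)\cdot\hat y|\leq 2r\sin(\theta_1/2+\theta_2/4)\leq \tfrac{3}{2}r\theta_1,
\]
using $\theta_2\leq\theta_1$ together with $\sin\alpha\leq\alpha$.

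For the parallel component I would invoke the reflection symmetry of the construction across $st(G_n)$. Let $y'$ be the image of $y$ under that reflection; then $y'$ lies in the upper copy of $G_{n-1}$ with $d(z,y')=r$, and $\tilde{y'}$ is the Euclidean reflection of $\tilde y$ about the $x$-axis, so in particular $\tilde{y'}_x=\tilde y_x$. Consequently $|(\tilde x-\tilde y)\cdot\hat x|=|\tilde x_x-\tilde{y'}_x|$ is the horizontal spread between two points of the same copy of $\tilde G_{n-1}$ at graph distance $r$ from the common terminal $\tilde z$. I would estimate this by iterating the same localize-and-reflect argument one scale deeper: at the next level either $x$ and $y'$ lie in a common unscaled copy of $G_{n-2}$ (and the problem shrinks to the same type at a smaller scale), or they lie in parallel copies of $G_{n-2}$ meeting at a fork of $G_{n-1,1}$, in which case the present lemma with $n-1$ in place of $n$ (fork angle $\theta_2\leq\theta_1$) gives a bound of order $\theta_2 r\leq\theta_1 r$. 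Under the decreasing hypothesis on $\{\theta_j\}$ these recursive contributions telescope to $O(\theta_1 r)$.

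Combining the two coordinate bounds through the Euclidean norm yields $|\tilde x-\tilde y|\leq 12\theta_1\, d(z,x)$; the loose constant $12$ is there to absorb the trigonometric factors together with the telescoping geometric series.

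The main obstacle will be closing the recursion cleanly: one must verify that at each stage the residual two points still lie in a common Laakso subgraph at equal graph distance from one of its terminals, and that the accumulated constants remain bounded under iteration. The decreasing property $\theta_{k+1}\leq\theta_k$ is essential here, as it bounds the contribution from scale $k$ by $\theta_k r\leq\theta_1 r$ and thereby controls the telescoping sum.
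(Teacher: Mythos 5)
Your argument is essentially the same as the paper's: both proofs hinge on the reflection symmetry of $\tilde G_n$ about $st(G_n)$ and then iterate that reflection down through the scales, accumulating contributions into a convergent sum. The one substantive difference is organizational: the paper bounds the full displacement $|\tilde x_i - \tilde x_{i+1}|$ at each reflection step and sums a geometric series, while you split $\tilde x - \tilde y$ into components parallel and perpendicular to $st(G_n)$, bound the perpendicular part by the aperture estimate from Remark~\ref{aperature}, and treat the parallel part by the recursion. This decomposition is harmless but adds bookkeeping you do not need, since at the next scale the relevant axis $st(G_k)$ is tilted relative to $st(G_n)$, so the parallel/perpendicular split does not propagate cleanly; it is simpler to just bound the full displacement and appeal to the triangle inequality, as the paper does.

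One imprecision worth flagging: you attribute the convergence of the ``telescoping'' sum to the decreasing property of $\{\theta_j\}$. That property only guarantees each per-scale contribution carries a factor $\theta_j \le \theta_1$; it does \emph{not} by itself make the sum converge, since there can be order $n$ scales. What actually controls the sum is that the relevant graph distance contracts by a factor $\tfrac{5}{6}$ at each reflection step (because each successive fork point $z_{i+1}$ lies strictly inside a copy of $G_k$, so $d(z_{i+1},x_{i+1}) \le \tfrac{5}{6}\,d(z_i,x_i)$), which is exactly where the paper's geometric series $\sum (5/6)^k$ and the constant $12 = 2\cdot 6$ come from. You should make that geometric contraction explicit to close your recursion; as written, ``these recursive contributions telescope to $O(\theta_1 r)$'' elides the essential point.
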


\begin{proof}
  We may suppose without loss of generality that $z$ is closer to the source than the sink.  As $x$ and $y$ are parallel points in different unscaled copies of $G_{n-1}$ with $d(x,z) = d(y,z)$, we get that one of $x$ and $y$ must be on a $G_{n-1}$ making up the ``top'' of a diamond while the other is on the $G_{n-1}$ making up the ``bottom''.  Note that $\tilde{G}_n$ is invariant when reflected about $st(G_n)$.  We let $x_1 = x$ and we let $x_2$ denote the point in $G_n$ such that $\tilde{x}_2$ is the reflection of $\tilde{x}_1$ about $st(G_n)$.  Then it follows that
  \begin{align*}
    |\tilde{x}_2 - \tilde{z}| = |\tilde{x}_1 - \tilde{z}| \leq d(z,x),
  \end{align*}
  and $\overline{\tilde{z}\tilde{x}_2}$ makes an angle of no more than $\frac{1}{2} (\theta_1 + \theta_2) \leq \theta_1$ with $st(G_n)$.  The second claim follows easily from Remark \ref{aperature} and the fact that $x_2$ is contained in the convex hull $C$ of some $\tilde{G}_{n-1}$ for some unscaled copy of $G_{n-1}$ in $G_n$.  The same follows for $\overline{\tilde{z}\tilde{x}_1}$ and so we get that
  \begin{align*}
    |\tilde{x}_1 - \tilde{x}_2| \leq 2 \theta_1 |\tilde{z} - \tilde{x}| = 2\theta_1 d(z,x).
  \end{align*}
  If $x_2 = y$, we stop.  Otherwise, we get that $x_2$ and $y$ are contained in the same unscaled copy of $G_{n-1}$ and satisfy $d(z,x_2) = d(z,y)$.  Thus, $x_2$ and $y$ are contained in different unscaled copies of $G_{k-1}$ of some unscaled copy of $G_k$ where $k < n$ both copies (of $G_{k-1}$) of which are contained in the unscaled copy of $G_{n-1}$ containing $x_2$ and $y$.  Thus, as before, there exists some fork point $z_2$ in $G_{k,1}$ so that $d(z_2,x_2) = d(z_2,y) \leq \frac{5}{6} d(z,x)$.  If we let $x_3$ denote the point in $G_k$ such that $\tilde{x}_3$ is the reflection of $\tilde{x}_2$ about the axis of $st(G_k)$, we get as before that
  \begin{align*}
    |\tilde{x}_3 - \tilde{x}_2| \leq 2 \theta_1 d(z_2,x_2) \leq \frac{5}{6} 2\theta_1 d(z,x).
  \end{align*}
  Continuing, we get a sequence of points $\{x_1,...,x_N\}$ where $x_1 = x$ and $x_N = y$ so that
  \begin{align*}
    |\tilde{x}_1 - \tilde{x}_N| \leq \sum_{i=1}^{N-1} |\tilde{x}_i - \tilde{x}_{i+1}| \leq 2 \theta_1 d(z,x) \sum_{i=0}^{N-1} \left( \frac{5}{6} \right)^k \leq 12 \theta_1 d(z,x).
  \end{align*}
\end{proof}

Let
\begin{align*}
  L := \lim_{n \to \infty} \frac{6^n}{\prod_{j=1}^n (2+4\cos\theta_j)}.
\end{align*}
If $\theta_j$ is of the form
\begin{align}
  \theta_j = \left( \sqrt{M+j} \log(M+j) \right)^{-1} \label{e:thetaj-defn}
\end{align}
for $M \geq 1$, then we see that $L$ is bounded.  For convenience, we let $\log$ be in base 2 unless specified otherwise.  In fact, as $M$ increases, $L$ decreases and so we may suppose that $L$ is always bounded by some absolute constant.  It is not hard to see that, by specifying $M$ larger than some absolute constant, we may suppose that $L \leq 2$ always, which we will do from now on.  We will still continue using $L$, just to avoid magic numbers.

\begin{proposition} \label{first-level-distortion}
  There exist absolute constants $C > 0$ and $M_0 \geq 1$ so that if $x,y \in G_n$ are in different unscaled copies of $G_{n-1}$ and $\theta_j$ is of the form \eqref{e:thetaj-defn} for any $M \geq M_0$, then
  \begin{align*}
    \frac{C}{L}\theta_1^{1/2} d(x,y) \leq d(f(x),f(y)) \leq d(x,y).
  \end{align*}
\end{proposition}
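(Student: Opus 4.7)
The upper bound is immediate: since each edge of $G_n$ is sent by $f$ to a horizontal segment of length $1$ in $\H_\infty$ and $G_n$ carries its path metric, the triangle inequality gives $d(f(x),f(y))\le d(x,y)$.

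For the lower bound I use the Koranyi decomposition
\begin{align*}
  d(f(x),f(y))^4 \;=\; |\tilde{y}-\tilde{x}|^4 + A(x,y)^2,
\end{align*}
where $A(x,y)$ is the signed area in $\R^2$ enclosed by $\pi\circ\gamma$ together with the segment $\overline{\tilde{y}\tilde{x}}$, for any horizontal lift $\gamma$ from $f(x)$ to $f(y)$; identity \eqref{swept-area} identifies $A(x,y)$ with the third coordinate of $f(x)^{-1}f(y)$. Since $\theta_1\le 1$, it suffices to find an absolute $c>0$ such that at least one of
\begin{align*}
  |\tilde{x}-\tilde{y}|\ge \frac{c}{L}\,d(x,y), \qquad |A(x,y)|\ge \frac{c\theta_1}{L^2}\,d(x,y)^2
\end{align*}
holds in every configuration.

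The case split is on the two distinct edges $e_x,e_y\subset G_{n,1}$ containing $x$ and $y$, modulo the reflection symmetry of $\tilde{G}_n$ across $st(G_n)$ and source/sink exchange. When $e_x,e_y$ lie on a common source-sink geodesic of $G_{n,1}$ (the \emph{series} case) I establish the horizontal inequality. Lemma \ref{convex-hull} and Remark \ref{aperature} confine $\tilde{x}$ and $\tilde{y}$ to thin spindles around the segments $st(e_x)$ and $st(e_y)$, of aperture at most $\theta_2/2$ at their endpoints; moreover the two spindle axes at any vertex of $G_{n,1}$ shared by $e_x$ and $e_y$ differ from antipodal by at most $\theta_1$. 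Using Lemma \ref{st-distortion} to compute the edge-image lengths and projecting onto the line through $st(G_n)$, a law-of-cosines estimate yields $|\tilde{x}-\tilde{y}|\ge c\,d(x,y)/L$. The sparser series configurations in which $e_x$ and $e_y$ do not share a vertex are handled by inserting an intermediate edge of $G_{n,1}$ and invoking the same estimate.

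When $e_x,e_y$ are the two halves of a single diamond of $G_{n,1}$ meeting at a fork $z\in G_{n,1}$ (the \emph{parallel} case), I estimate $A(x,y)$ directly. Take $\gamma$ to be the concatenation under $f$ of the geodesics $P_{xz}$ and $P_{zy}$. Applying Lemma \ref{short-cut} recursively at every scale, each source-sink excursion through an unscaled subgraph $G_k$ traversed by $\gamma$ contributes zero signed area and may be replaced by the corresponding segment $st(G_k)$; this reduces $A(x,y)$ to the signed area enclosed by the piecewise-linear developed spine joining $\tilde{x}$, $\tilde{z}$, and $\tilde{y}$, closed by $\overline{\tilde{y}\tilde{x}}$. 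After reducing to the symmetric case $d(z,x)=d(z,y)$ via the reflection trick from the proof of Lemma \ref{fork-collapse}, the reflection symmetry of $\tilde{G}_n$ across $st(G_n)$ shows that the two halves of this region have matching orientation, producing a region with apex angle $\theta_1$ at $\tilde{z}$ and legs of length $\asymp d(x,y)/L$, so $|A(x,y)|\asymp \theta_1\,d(x,y)^2/L^2$. The principal obstacle is precisely this no-cancellation step, since the internal Laakso structure traversed by $\gamma$ could a priori contribute oppositely oriented sub-loops; Lemma \ref{short-cut} is exactly what pins every such excursion to zero area, after which the computation reduces to elementary planar trigonometry on a single diamond of aperture $\theta_1$.
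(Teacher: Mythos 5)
Your framework (Koranyi decomposition $d(f(x),f(y))^4 = |\tilde{y}-\tilde{x}|^4 + A(x,y)^2$, case split series vs.\ parallel, with the horizontal chord doing the work in series and the signed area in parallel) matches the paper's, and your treatment of the upper bound and the series case is essentially correct. The genuine gap is in the parallel case, in two places.

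First, the sentence ``after reducing to the symmetric case $d(z,x)=d(z,y)$ via the reflection trick from the proof of Lemma \ref{fork-collapse}'' is not a real reduction. The reflection argument in Lemma \ref{fork-collapse} applies only when $d(z,x)=d(z,y)$ already; it does not convert an asymmetric configuration to a symmetric one. The paper in fact does \emph{not} use an area estimate when the two arms are very unequal: its Case 2b ($\ell > m+1$) compares $\tilde{y}$ to a point $\tilde{y}_0$ in series with $x$, uses Lemma \ref{fork-collapse} to control $|\tilde{y}-\tilde{y}_0|$, and then invokes the series bound \eqref{series-bound} to get a \emph{linear} lower bound on $|\tilde{x}-\tilde{y}|$. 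Your framework would need the horizontal alternative to hold here, and you do not argue it.

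Second, even when $d(z,x)=d(z,y)$, the claim that the area bounded by the developed spine ``reduces to elementary planar trigonometry on a single diamond of aperture $\theta_1$'' misses the cancellation problem. The angle that the segment $\overline{\tilde{p}_i\tilde{p}_{i+1}}$ makes with $st(G_n)$ is controlled only by $\sum_{j\le \lambda(p_{i+1};z)}\theta_j$, and since $\sum_j\theta_j$ diverges for the choice \eqref{e:thetaj-defn}, the developed spine can accumulate total turning of order $\pi$ or more and fold back on itself within its lens-shaped convex hull. Lemma \ref{short-cut} pins individual $G_k$ excursions to zero area, but it does nothing to prevent the \emph{sequence} of straightened segments from winding. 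Concretely, the signed area swept between $\tilde{z}$ and $\tilde{x}$ need not be comparable to the area of a triangle with apex $\theta_1$ and leg $\sim d(z,x)/L$; parts of it can be negative and partially cancel the main contribution. The paper's proof handles precisely this: it fixes $\alpha$ with $6^\alpha>6^8L^2$, passes only to $u,v$ at distance $m\,6^{k-\alpha}$ from $z$ where the accumulated angle is provably small (since $m<6^{\alpha+1}$ is bounded), obtains a clean positive area $\Sigma'$ there, separately bounds the tail area $|T|$ above by a small multiple of $\Sigma'$, and further truncates at the first points $a,b$ where the developed angle exceeds $\pi/4$ using the decay estimate \eqref{tail-bounds}. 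None of this bookkeeping appears in your argument, and without it the central claim $|A(x,y)|\gtrsim \theta_1 d(x,y)^2/L^2$ is unsupported.
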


\begin{proof}
  Remember that we will always be free to choose $M_0$ large enough so that $L \leq 2$.  The upper 1-Lipschitz inequality is trivial as $f$ is 1-Lipschitz on each edge of $G_n$ and the metric on $G_n$ is a path metric.  Thus, we only need to prove the lower bound.  We let $P$ denote a geodesic path from $x$ to $y$.  There are two cases.  Either $x$ and $y$ are in series or they are in parallel.
  
  {\bf Case 1:} $x$ and $y$ are in series.

  We will actually show the stronger statement that if $x$ and $y$ are in series, then
  \begin{align}
    |\tilde{x} - \tilde{y}| \geq \frac{1}{12L} d(x,y). \label{series-bound}
  \end{align}
  This clearly gives the required result as $d(f(x),f(y)) \geq |\tilde{x} - \tilde{y}|$.


  Because $x$ and $y$ are in different unscaled copies of $G_{n-1}$, we have that there exists some element $z \in (P \backslash \{x,y\}) \cap G_{n,1} \neq \emptyset$.  We may suppose without loss of generality that $d(x,z) \geq d(y,z)$ and let $k \in \N$ be such that
  \begin{align}
    6^k \leq d(x,z) < 6^{k+1}. \label{log-xz-series}
  \end{align}
  Let $\{G^{(i)}\}_i$ denote all the unscaled copies of $G_k$ between $x$ and $z$.  There is at least 1 and at most 5 of them because of \eqref{log-xz-series}.  We also let $z'$ denote the terminal of $G^{(i)}$ that is closest to $x$.  If we let $G'$ be the unscaled copy of $G_k$ containing $x$ with terminal $z'$ and $G$ be the unscaled copy of $G_k$ containing $y$ with terminal $z$, then $G^{(i)}$ connect $G$ and $G'$.
  
  Note that the angle between each $st(G^{(i)})$ is at most $2\theta_1$ and that $|st(G^{(i)})| \geq \frac{6^k}{L}$.  Let $Q : \R^2 \to \R$ denote the orthogonal projection in $\R^2$ to the line spanned by $\tilde{z}$ and $\tilde{z'}$.  As $\tilde{z'}$ and $\tilde{z}$ are opposite terminals of a chain of at most five $\tilde{G}_k$ each of which make angles at most $2\theta_1$ with its neighbor, if we let $\theta_1$ be small enough, then we get that the linear ordering (or its opposite) is preserved
  \begin{align*}
    Q(\tilde{x}) \leq Q(\tilde{z'}) \leq Q(\tilde{z}) \leq Q(\tilde{y})
  \end{align*}
  and
  \begin{align*}
    |Q(\tilde{z'}) - Q(\tilde{z})| \geq \frac{6^k}{L}.
  \end{align*}
  Thus, we get that
  \begin{align*}
    |\tilde{x} - \tilde{y}| \geq |\tilde{z'} - \tilde{z}| \geq \frac{6^k}{L} \overset{\eqref{log-xz-series}}{>} \frac{1}{6L} d(x,z) \geq \frac{1}{12L} d(x,y).
  \end{align*}

  {\bf Case 2:} $x$ and $y$ are in parallel.
  
  Let $z$ be a fork point in $G_{n,1}$ that lies in a geodesic path from $x$ to $y$.  Then $d(x,y) = d(x,z) + d(z,y)$.  We will assume without loss of generality that $d(x,z) \geq d(y,z)$ and $x > z$, $y > z$.  We can suppose that $d(y,z) \leq \frac{1}{3} 6^n$ as otherwise we have that $\min \{d(x,z),d(y,z)\} > \frac{1}{3}6^n$ and so $z$ could not be on the geodesic path between $x$ and $y$.

  Let $k \in \N$ satisfy
  \begin{align}
    6^k \leq d(y,z) < 6^{k+1}, \label{log-yz-parallel}
  \end{align}
  and $\alpha \in \N$ be the smallest integer such that
  \begin{align}
    6^\alpha > 6^8 L^2. \label{define-alpha}
  \end{align}
  As $L \leq 2$, one sees that $\alpha = 9$, although we will still continue using $\alpha$ for clarity.

  Let $\ell$ and $m$ be the number of unscaled copies of $G_{k-\alpha}$ on the geodesic paths from $z$ to $x$ and $y$, respectively.  It is easy to see from \eqref{log-yz-parallel} that
  \begin{align}
    6^\alpha \leq m < 6^{\alpha+1}. \label{e:m-const}
  \end{align}

  {\bf Case 2a:} $\ell \leq m + 1$.  Thus, $\ell \leq 6^{\alpha+1}$ and $d(x,y) \leq 6^{k+2}$.

  Note that the mapping $f$ takes a geodesic path from $x$ to $y$ traveling through $z$ to a horizontal path $P$ in $\H$ from $f(x)$ to $f(y)$ going through $f(z)$.  Consider the developed path $\{p_i\}_{i=1}^N$ from $z$ to $x$.  If there exists some $i \in \{1,...,N-1\}$ so that the angle of the oriented line segment from $\overline{\tilde{p}_i\tilde{p}_{i+1}}$ makes an angle of more than $\frac{\pi}{4}$ with $st(G_n)$, then we let $a = p_i$ for the minimal such index $i$.  Otherwise, we let $a = x$.  We do the same thing with $y$ to get a point $b$.

  We first claim that
  \begin{align}
    \max\{d(a,x), d(b,y)\} \leq \frac{\theta_1^{1/2}}{50L} 6^k. \label{tail-bounds}
  \end{align}
  We will just prove the inequality for $d(a,x)$ as the inequality for $d(b,y)$ will follow from the same reasoning.  If $a = x$, then the statement is obviously true.  Thus, we may suppose $a \neq x$ and so the angle between $\overline{\tilde{p}_i\tilde{p}_{i+1}}$ and $st(G_n)$ is greater than $\frac{\pi}{4}$.  We then have that
  \begin{multline*}
    \frac{\pi}{4} \leq \sum_{j=1}^{\lambda(a;z)} \theta_j + \theta_{\lambda(a;z)} \leq 2 \sum_{j=1}^{\lambda(a;z)} \frac{1}{\sqrt{M + j}} \leq 2 \int_0^{\lambda(a;z)} \frac{dx}{\sqrt{M + x}} \\
    = 4 \left( \sqrt{M + \lambda(a;z)} - \sqrt{M} \right).
  \end{multline*}
  Using concavity estimates for the square root at $M$, we get that this implies that $\lambda(a;z) \geq \frac{\pi}{8} \sqrt{M}$.  We then have that
  \begin{align*}
    d(a,x) = \sum_{j=i+1}^{N-1} d(p_j,p_{j+1}) \leq 6^k \cdot 5\sum_{j=\lambda(a;z)}^\infty 6^{-j} \leq 6^{k-\lambda(a;z)+1} \leq 6^{k+1 - \frac{\pi}{8} \sqrt{M}}.
  \end{align*}
  As $\theta_1 = (1 + M)^{-1/2}$, if we choose $M$ to be larger than some absolute constant, we have that
  \begin{align*}
    d(a,x) \leq \frac{\theta_1^{1/2}}{50L} 6^k,
  \end{align*}
  which finishes the proof of the claim.  Here we used a very inefficient bound that $6^{-c\theta^{-1}}$ is much less than $\theta^{1/2}$ for small enough $\theta$.

  We let $\Sigma$ denote the signed area of the closed path in $\R^2$ that first goes from $\tilde{a}$ to $\tilde{b}$ along the image of $P$ via $\pi \circ f$ and then goes back to $\tilde{a}$ via a straight line.  By \eqref{swept-area},
  \begin{align*}
    d(f(a),f(b)) \geq NH(f(a)^{-1}f(b)) = |\Sigma|^{1/2}.
  \end{align*}
  We will break up $\Sigma$ into the sum of the signed area of two separate closed paths.

  Let $u,v \in G_n$ be points on the geodesic path from $z$ to $x$ and $y$ so that $d(u,z) = d(v,z) = m 6^{k-\alpha}$, that is $v$ is the point in the unscaled copies of $G_{k-\alpha}$ that is closest to $y$.  If we set $M$ large enough, then we can ensure that each of the $m$ possible $st(G_{k-\alpha})$ between $z$ and $u$ and $v$ makes an angle of no more than $\pi/4$ with $st(G_n)$ and so $z < u \leq a$ and $z < v \leq b$.  This can be done simply because \eqref{e:m-const} says that $m$ is comparable to $6^\alpha$, a constant value.  Thus, as successive $st(G_{k-\alpha})$ differ by angles of no more than $\theta_1$, we can easily bound the total accrued angle deviation.
  
  Note that $v$ is necessarily in the developed path from $z$ to $b$ as $d(v,y) < 6^{k-\alpha}$.  We then let $\{p_1,...,p_N\}$ denote the developed path from $v$ to $b$.  There are two cases for $u$.  The first case is that it is in the developed path from $z$ to $a$, in which case we let $\{p_1',...,p_{N'}'\}$ denote the developed path from $u$ to $a$.  Otherwise, $u$ is not in the developed path from $z$ to $a$.  In this case, it must be because $6^{k-\alpha} \leq d(u,a) < 6^{k-\alpha+1}$, and so there exists a unique point $u'$ for which $u < u' \leq a$, $d(u,u') < 6^{k-\alpha}$, and $u'$ is in the developed path from $z$ to $a$.  We then let $\{p_1',...,p_{N'}'\}$ denote the path so that $p_1' = u$, $p_2' = u'$, and then $p_i'$ follows the developed path from $u'$ to $a$.  We then let $T$ denote the signed area of the closed path that goes from $\tilde{p}_{N'}' = \tilde{a}$ via the line segments $\overline{\tilde{p}_{i+1}'\tilde{p}_i'}$ until it reaches $\tilde{p}_1' = \tilde{u}$ where it proceeds to go straight to $\tilde{p}_1 = \tilde{v}$ and then goes via the line segments $\overline{\tilde{p}_i \tilde{p}_{i+1}}$ until it reaches $\tilde{p}_N = \tilde{b}$ and then it goes straight back to $\tilde{a}$.
  
  Let $z = q_1 < q_2 < ... < q_m = u$ and $z = q_1' < q_2' < ... < q_m' = v$ be the points so that $d(q_i,q_{i+1}) = 6^{k-\alpha}$ and $d(q_i',q_{i+1}') = 6^{k-\alpha}$, that is $q_i$ and $q_i'$ are terminal points of successive unscaled copies of $G_{k-\alpha}$ from $z$ to $u$ and $v$, respectively.  We let $\Sigma'$ denote the signed area of the path in $\R^2$ that goes from $\tilde{u}$ via the line segments $\overline{\tilde{q}_{i+1}\tilde{q}_i}$ until it reaches $\tilde{q}_1 = \tilde{z} = \tilde{q}_1'$ where it proceeds to then go via line segments $\overline{\tilde{q}_i'\tilde{q}_{i+1}'}$ until it reaches $\tilde{v}$ and then it goes straight back to $\tilde{u}$.  Then Lemma \ref{short-cut} tells us that
  \begin{align*}
    \Sigma = \Sigma' + T.
  \end{align*}

  \begin{figure}
  \centering
    \scalebox{0.7}{\includegraphics{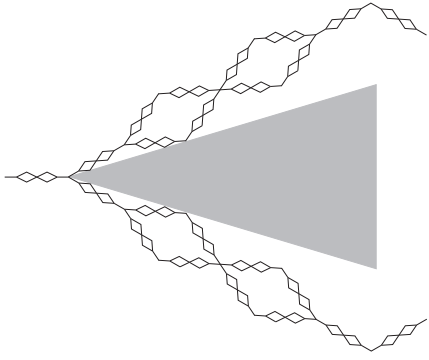}}
    \caption{The triangle that fits between two copies of $\tilde{G}_k$ \label{g:sigma-prime}}
  \end{figure}
  
  We first prove that the path defining $\Sigma'$ does not self-intersect.  Recall that $d(y,z) \leq \frac{1}{3} 6^n$.  Thus, the line segments defined by $\{q_1,...,q_m\}$ and $\{q_1',...,q_m'\}$ do not intersect as they lie in different unscaled copies of $G_{n-1}$ that are parallel.  As $m < 6^{\alpha+1}$, all the line segments $\overline{\tilde{q}_i\tilde{q}_{i+1}}$ makes an angle of at most $6^{\alpha + 1} \theta_1$ with $st(G_n)$.  Here, we again use the fact the inefficient---but sufficient---fact that any successive $st(G_{k-\alpha})$ cannot have angles that differ by more than $\theta_1$.  The same holds for $\overline{\tilde{q}_{i+1}'\tilde{q}_i'}$.  Note that the piecewise linear curves connecting the $\{\tilde{q}_i\}$ and the $\{\tilde{q}_i'\}$ lie in convex hulls of unscaled copies of $G_{n-1}$ that on opposite sides of $st(G_n)$ and so they are are disjoint.  Thus, by specifying that $\theta_1$ be small enough we get from Lemma \ref{fork-collapse} that $\overline{\tilde{u}\tilde{v}}$ is small enough (for example, smaller than the length of $\overline{\tilde{q}_i\tilde{q}_{i+1}}$).  Now one can view $st(G_n)$ as the $x$-axis of $\R^2$ and the curves defined by $\overline{\tilde{q}_i\tilde{q}_{i+1}}$ and $\overline{\tilde{q}_i'\tilde{q}_{i+1}'}$ as piecewise linear graphs starting from the origin.  These curves are on opposite sides of the $x$-axis and have controlled angle deviations so that they do not self-intersect but the endpoints wind up being much closer to each other compared to the size of the line segments from which non-intersection follows.
  
  As $m \geq 6^{\alpha}$, we also have that each subpath from $\tilde{z}$ to $\tilde{x}$ and $\tilde{y}$ in the projection of $P$ to $\R^2$ via $\pi \circ f$ contains the projection to $\R^2$ of source-sink geodesics in unscaled copies $G$ and $G'$ of $G_k$.  We get from the fact that $\tilde{G}$ and $\tilde{G'}$ are contained within the convex hulls of $\tilde{G}_1$ and $\tilde{G'}_1$ and Lemma \ref{st-distortion} that there is an isosceles triangle of angle $\theta_1 - \frac{\theta_2}{2}$ with side lengths at least
  \begin{align*}
    \frac{6^k}{L} \cos \frac{\theta_1}{2} \left[ \cos \left( \frac{2\theta_1 - \theta_2}{4} \right) \right]^{-1}
  \end{align*}
  that can fit between them in $\Sigma$.  See Figure \ref{g:sigma-prime}.  We then have
  \begin{align}
    |\Sigma'| \geq \left( \frac{6^k}{L} \cos \frac{\theta_1}{2} \right)^2 \tan \left( \frac{2\theta_1 - \theta_2}{4} \right) > \frac{1}{6} \left( \frac{6^k}{L} \right)^2 \theta_1. \label{sigma'-bound}
  \end{align}
  Here, we need to specify that $\theta_1$ is smaller than some absolute constant and use the fact that $\theta_2 < \theta_1$.

  We now bound $|T|$.  As there are only at most two line segments $\overline{\tilde{u}\tilde{v}}$ and $\overline{\tilde{a}\tilde{b}}$ of $\partial T$, the boundary curve defining $T$, that can make an angle of no more than $\frac{\pi}{4}$ with $st(G_n)$, we have that the winding number of $\partial T$ around any point is at most 1.  Thus, $|T|$ is no more than the unsigned area enclosed by $\partial T$ and so it suffices to bound the unsigned area.
  
  We have that $d(v,b) \leq d(v,y) < 6^{k-\alpha}$.  Thus, by Lemma \ref{convex-hull} the path from $\tilde{v}$ to $\tilde{b}$ along the projection of $P$ is contained in a convex hull $C_1$ of the image of some unscaled copy of $G_{k-\alpha}$ where $\tilde{v}$ is a terminal.  In the same way, we have $|\tilde{a} - \tilde{u}| \leq 2 \cdot 6^{k-\alpha}$ and so the path from $\tilde{a}$ to $\tilde{u}$ is contained in either one or two convex hulls $C_2$ and $C_3$ of one or two (sequential) unscaled copies of $G_{k-\alpha}$ such that $\tilde{u}$ is the terminal of one of them.  Thus, the region in $\R^2$ given by $T$ is contained in the convex hull of $C_1,C_2,C_3$.

  We will collect some information about the relative geometry of the $C_i$.  As $d(z,u) = d(z,v) = m6^{k-\alpha} < 6^{k+1}$, we have by Lemma \ref{fork-collapse} that $|\tilde{u} - \tilde{v}| \leq 6^{k+3} \theta_1$.  Also, we have that each $C_i$ makes an angle of at most $2(6^{\alpha+1}+2)\theta_1$ with $st(G_n)$.  This follows from the fact that $m < 6^{\alpha+1}$.  We also have that $C_2$ and $C_3$ can make an angle of at most $2\theta_1$ with each other.  Finally, the line segments connecting the endpoints of $C_i$ each have length no more than $6^{k-\alpha}$.

  Now consider the following domain $S$ in $\R^2$.  We start with a parallelogram which has $\overline{\tilde{u}\tilde{v}}$ as the left side and whose top and bottom are of length $6^{k-\alpha+2}$ and are parallel to $st(G_n)$.  We take the top and bottom of the parallelogram to be the bases of isosceles triangles whose vertex angle are both $\pi - 8(6^{\alpha+1}+2) \theta_1$ (thus they are congruent).  We then let $S$ be the region that is the parallelogram along with the two triangles.  See Figure \ref{g:s-t} for how $S$ is situated relative to $C_1$,$C_2$, and $C_3$ (the length ratios may not be accurate).

  \begin{figure}
    \centering
    \scalebox{0.6}{\includegraphics{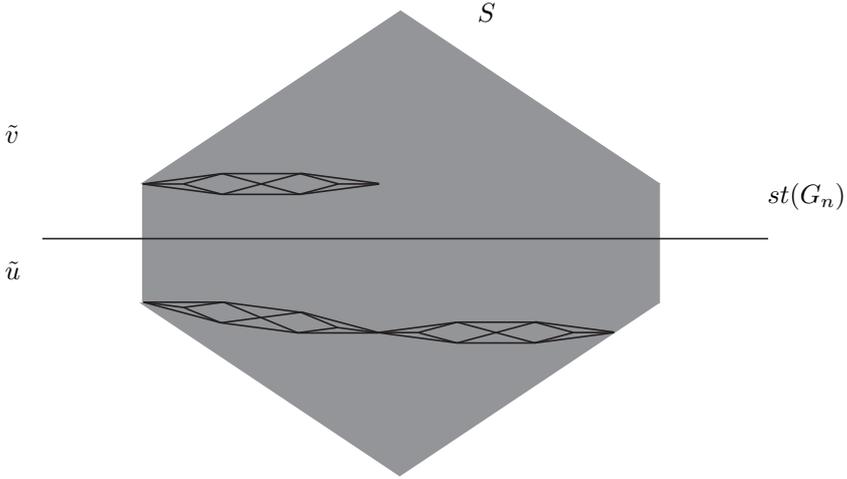}}
    \setlength{\unitlength}{1mm}
    \begin{picture}(0,0)
    \put(-112,44){ $\tilde{v}$}
    \put(-112,26){ $\tilde{u}$}
    \put(-12,36){ $st(G_n)$}
    \put(-50,60){ $S$}
    \end{picture}
    \caption{The geometry of $S$ relative to $C_1$,$C_2$, and $C_3$. \label{g:s-t}}
  \end{figure}
  
  Using all the information collected about the relative geometry of $C_1$,$C_2$, and $C_3$, it is elementary to see that the region given by $T$ is contained within $S$.  This gives us that
  \begin{multline}
    |T| \leq |S| \leq 6^{2k-\alpha+5} \theta_1 + \frac{1}{4} 6^{2k-2\alpha+4} \tan(4(6^{\alpha+1}+2) \theta_1) \\
    \leq 6^{2k-\alpha+5} \theta_1 + 6^{2k-2\alpha+5} (6^{\alpha+1}+2) \theta_1\overset{\eqref{define-alpha}}{\leq} \frac{1}{36} \left(\frac{6^k}{L} \right)^2 \theta_1. \label{t-bound}
  \end{multline}
  Thus, 
  \begin{multline*}
    d(f(x),f(y)) \geq d(f(a),f(b)) - d(f(a),f(x)) - d(f(b),f(y)) \\
    \overset{\eqref{tail-bounds}}\geq |\Sigma|^{1/2} - \frac{\theta_1^{1/2}}{4L} 6^k \geq (|\Sigma'| - |T|)^{1/2} - \frac{\theta_1^{1/2}}{50L} 6^k \overset{\eqref{sigma'-bound} \wedge \eqref{t-bound}}{\geq} \frac{\theta_1^{1/2}}{3L} 6^k,
  \end{multline*}
  and as $d(x,y) \leq 6^{k+2}$, we get
  \begin{align*}
    \frac{d(f(x),f(y))}{d(x,y)} \geq \frac{\theta_1^{1/2}}{108L}.
  \end{align*}

  {\bf Case 2b:} $\ell > m + 1$.

  Let $y_0$ denote a point in series with $x$ so that $y_0 > z$ and $d(y_0,z) = d(y,z)$.  Then
  \begin{align*}
    d(y_0,x) = d(x,z) - d(y_0,z) \geq \frac{1}{1 + 6^{1+\alpha}} d(x,z),
  \end{align*}
  and
  \begin{align*}
    |\tilde{y}_0 - \tilde{y}| \leq 12 \theta_1 d(y,z) \leq 12 \theta_1 d(x,z),
  \end{align*}
  by Lemma \ref{fork-collapse} and the fact that we've assumed $d(x,z) \geq d(y,z)$.  Thus, we get that
  \begin{align*}
    d(f(x),f(y)) &\geq |\tilde{x} - \tilde{y}| \\
    &\geq |\tilde{x} - \tilde{y}_0| - |\tilde{y}_0 - \tilde{y}| \\
    &\overset{\eqref{series-bound}}{\geq} \left( \frac{1}{12(1 + 6^{1+\alpha})L} - 72\theta_1 \right) d(x,z) \\
    &\geq \frac{1}{2} \left( \frac{1}{(1 + 6^{1+\alpha})12L} - 72\theta_1 \right) d(x,y)
  \end{align*}
  Remember that we may assume $\alpha$ and $L$ are both bounded by some absolute constant regardless of how small we assign $\theta_1$.  Thus, if we choose $\theta_1$ to be smaller than some absolute constant, we get that there exists some other absolute constant $C > 0$ so that
  \begin{align*}
    d(f(x),f(y)) \geq C d(x,y).
  \end{align*}
\end{proof}

\begin{proof}[Proof of Theorem \ref{laakso-distortion}]
  Let $C > 0$ and $M_0 \geq 1$ be as defined in Proposition \ref{first-level-distortion} and define
  $$\theta_j = \left( \sqrt{M_0 + j} \log(M_0 + j) \right)^{-1}.$$
  Let $x,y \in G_n$.  We will show that
  \begin{align*}
    \frac{C}{L(M_0 + n)^{1/4} \sqrt{\log(M_0 + n)}} d(x,y) \leq d(f(x),f(y)) \leq d(x,y),
  \end{align*}
  which clearly finishes the proof of the theorem.

  Consider the largest unscaled copy of some Laakso subgraph $G_k$ of $G_n$ for which $x,y$ are both in $G_k$.  Then $x$ and $y$ must be in different unscaled copies of $G_{k-1}$ in $G_k$.  Note that $f$ restricted to $G_k$ acts as the double diamond embedding but where the first angle is $\theta_k$.  Thus, Proposition \ref{first-level-distortion} gives that
  \begin{align*}
    \frac{C}{L} \theta_k^{1/2} d(x,y) \leq d(f(x),f(y)) \leq d(x,y).
  \end{align*}
  Note then that
  \begin{align*}
    \theta_k^{1/2} = \frac{1}{(M_0 + k)^{1/4} \sqrt{\log(M_0 + k)}} \geq \frac{1}{(M_0 + n)^{1/4} \sqrt{\log(M_0 + n)}}.
  \end{align*}
  This finishes the proof of the theorem.
\end{proof}

\begin{proof}[Proof of Corollary \ref{optimal-convex}]
  This proof will resemble that of Proposition 3.1 in \cite{MN}.  We define a random walk on $G_m$ as follows.  For $t \leq 0$, we define $Z_t = s(G_m)$.  Then assuming $Z_t$ has been defined from $-\infty$ to $t \in \{1,...,6^m-1\}$ we let $Z_{t+1}$ to be the one (or two) neighboring points of $Z_t$ for which $Z_{t+1} > Z_t$.  If there are two choices for $Z_{t+1}$, choose either randomly with probability 1/2.  Finally, we let $Z_t = t(G_m)$ (that is, the sink) for all $t \geq 6^m$.

  As $d(f(x),f(y)) = 1$ when $x,y$ are neighbors of $G_m$, we get that
  \begin{align}
    \sum_{t \in \Z} \E[d(f(Z_t),f(Z_{t-1})^p] = 6^m. \label{edge-increment}
  \end{align}

  Fix $k \in \{1,...,m\}$ and let $h = \left\lceil \frac{\log 2}{\log 6} k \right\rceil$.  We can view $G_m$ as being built from $A = G_{m-h}$ where each edge of $G_{m-h}$ is replaced with a copy of $G_h$.  We claim that for every $i \in \{0,...,6^{m-h-1}-1\}$, $Z_t$ at time $t = 6^{h+1}i+6^h$ is located at a point in $G_m$ which has two outgoing edges, each one corresponding to a distinct copy of $G_h$.  This is because we can view $A$ as being built from $B = G_{m-h-1}$ with each edge in $B$ replaced by a $G_1$.  Note that each $G_1$ has a vertex, the lone neighbor of $s(G_1)$, of out degree 2.  The claim then follows as each edge in the $G_1$ is replaced by a copy of $G_h$ to form $G_m$.

  Consider the times
  \begin{align*}
    T_k = \{0,...,6^m-1\} \bigcap \left( \bigcup_{i=1}^{6^{m-h-1}-1} [6^{h+1}i+6^h + 6^{h-1}, 6^{h+1}i+6^h + 2 \cdot 6^{h-1}] \right).
  \end{align*}
  By definition of $h$, we have that
  \begin{align*}
    6^{h-1} < 2^k \leq 6^h.
  \end{align*}
  Thus, we get that if $t \in T_k$ such that $t \in [(6i + 1)6^h + 6^{h-1},(6i+1) 6^h + 2 \cdot 6^{h-1}]$ for some $i \in \{1,...,6^m-1\}$, then $t - 2^k \in [(6i+1)6^h - 6^h, (6i+1)6^h)$.  Thus, the walks $\{Z_s\}_{s \in \Z}$ and $\{\tilde{Z}_s(t-2^k)\}_{s \in \Z}$ at time $t' = (6i+1)6^h$ will have already become independent of each other and so they will select to walk down the two different $G_h$ branches with probability $\frac{1}{2}$.  Thus, we get from Theorem \ref{laakso-distortion} that there exists some $C > 0$ so that
  \begin{align*}
    \frac{\E[d(f(Z_t),f(\tilde{Z}_t(t-2^k)))]^p}{2^{kp}} &\geq \frac{1}{2} \frac{(2 \cdot 6^{h-1})^p}{C^p 2^{kp} m^{p/4} (\log m)^{p/2}} \\
    &\geq \frac{1}{6^p C^p m^{p/4} (\log m)^{p/2}}.
  \end{align*}
  Thus, we get that
  \begin{align*}
    \sum_{k=1}^\infty \sum_{t \in \Z} &\frac{\E[d(f(Z_t),f(\tilde{Z}_t(t-2^k)))]^p}{2^{kp}} \\
    &\geq \sum_{k=1}^m \sum_{t \in T_k} \frac{\E[d(f(Z_t),f(\tilde{Z}_t(t-2^k)))]^p}{2^{kp}} \\
    &\geq \frac{1}{6^p C^p}\sum_{k=1}^m \frac{|T_k|}{m^{p/4} (\log m)^{p/2}} \\
    &\gtrsim \frac{1}{6^p C^p} \sum_{k=1}^m \frac{6^{h-1} \cdot 6^{m-h-1}}{m^{p/4} (\log m)^{p/2}} \geq \frac{1}{6^p C^p} 6^m m^{1-\frac{p}{4}} (\log m)^{-p/2}.
  \end{align*}
  Now suppose $p < 4$.  Comparing the above inequality with \eqref{edge-increment} and noting that $m^{1-\frac{p}{4}} (\log m)^{-p/2} \to \infty$ as $m \to \infty$, we see that there cannot exist any finite $K > 0$ so that
  \begin{align*}
    \sum_{k=1}^\infty \sum_{t \in \Z}\frac{\E[d(f(Z_t),f(\tilde{Z}_t(t-2^k)))]^p}{2^{kp}} \leq K^p \sum_{t \in \Z} \E[d(f(Z_t),f(Z_{t-1})^p].
  \end{align*}
\end{proof}

\begin{proof}[Proof of Corollary \ref{discrete-nonembedding}]
  We will retain all the same notation as the proof of the previous corollary.  Let $g : \H \to \H(\Z)$ be the quasi-isometry with bounds \eqref{quasi-isometry}.  We will consider an embedding $f : G_n \to \H$ so that
  \begin{align}
    c_0(1+c_1) d(x,y) \leq d_\H(f(x),f(y)) \leq C n^{1/4} (\log n)^{1/2} d(x,y) \label{laakso-embedding}
  \end{align}
  for some absolute constant $C > 0$.  This is possible as $\H$ has a scaling automorphism.  As diam$(G_n) = 6^n$, we see that $g \circ f : G_n \to \H(\Z)$ maps $G_n$ into a ball of radius $2c_0 C n^{1/4} (\log n)^{1/2} 6^n$ when $n$ is sufficiently large (compared to $c_1$).  We also see that
  \begin{align*}
    d_{\H(\Z)}(g(f(x)),g(f(y))) \overset{\eqref{quasi-isometry} \wedge \eqref{laakso-embedding}}{\geq} (1 + c_1) d(x,y) - c_1 \geq d(x,y).
  \end{align*}

  Let $F : B(2c_0C6^n n^{1/4} (\log n)^{1/2}) \to X$ be a noncontracting map with Lipschitz constant $D$.  Then we get for large enough $n$ that $h = F \circ g \circ f : G_n \to X$ has the following bounds
  \begin{multline}
    d(x,y) \leq d_X(h(x),h(y)) \leq D \left( c_0 Cn^{1/4} (\log n)^{1/2} d(x,y) + c_1\right) \\
    \leq 2c_0 CD n^{1/4} (\log n)^{1/2} d(x,y). \label{discrete-markov}
  \end{multline}

  Let $\{X_t\}_{t \in \Z}$ be the same random walk on $G_n$ as in the proof of the previous corollary.  Then using the same reasoning from before, we have
  \begin{align}
    &\sum_{t \in \Z} \E[d(h(X_t),h(X_{t-1}))^p] \overset{\eqref{discrete-markov}}{\leq} (2c_0 CDn^{1/4} (\log n)^{1/2})^p 6^n, \label{increment} \\
    &\sum_{k=0}^\infty \sum_{t \in \Z} \frac{\E\left[ d(h(X_t),h(\tilde{X}_t(t-2^k)))^p \right]}{2^{kp}} \overset{\eqref{discrete-markov}}{\gtrsim} n 6^n. \label{drift}
  \end{align}
  As $X$ is Markov $p$-convex, we can use \eqref{markov-convexity-defn} to derive a lower bound for $D$ to get
  \begin{align*}
    c_X(B(7^n)) \geq c_X\left(B(2c_0 C6^n n^{1/4} (\log n)^{1/2})\right) \overset{\eqref{markov-convexity-defn} \wedge \eqref{increment} \wedge \eqref{drift}}{\gtrsim} \frac{n^{\frac{1}{p} - \frac{1}{4}}}{(\log n)^{1/2}}.
  \end{align*}
  This easily implies the lower bound \eqref{quantitative-nonembed} that we need.
\end{proof}

\section{Lower bounds for distortion of trees}

For ease of notation, we will write $\H_\infty$ more succintly in this section as $(\ell_2 \times \R, \cdot)$ where $\ell_2$ is now the $\ell_2$-sequence space of complex numbers.  The group product is then
\begin{align*}
  (x,t) \cdot (y,s) = \left(x+y, t + s + \frac{1}{2} \omega(x,y) \right)
\end{align*}
where $\omega(z,z') = \sum_{i=1}^\infty \Im(\overline{z_i}z_i')$.  As is well known, we can also express the symplectic form as $\omega(z,z') = \langle iz,z' \rangle$ where $\langle \cdot, \cdot \rangle$ is the usual inner product on $\ell_2$.  In this section, we prove that the complete binary trees $\{B_m\}_{m=1}^\infty$ embed into $\H_\infty$ with distortion at least $C\sqrt{\log \log |B_m|}$ for some absolute constant $C > 0$.

We will first need the following elementary lemma, which tells us that we can estimate $\omega(x,y)$ by the area of the triangle defined by $x$ and $y$.

\begin{lemma} \label{symplectic-projection}
  Let $x,y \in \ell_2$ be two vectors and let $\theta$ be their exterior angle.  Then
  \begin{align}
    |\omega(x,y)| \leq \|x\|\|y\| |\sin\theta|. \label{symplectic-estimate}
  \end{align}
\end{lemma}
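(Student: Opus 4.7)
The plan is to reduce the bound to Cauchy--Schwarz after recognizing the symplectic form as a real inner product on $\ell_2$ viewed as a real Hilbert space.

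First I would observe that, as noted in the paragraph preceding the lemma, $\omega(x,y) = \langle ix, y\rangle$, where multiplication by $i$ is interpreted componentwise on the complex sequence $x$, and $\langle \cdot,\cdot\rangle$ denotes the real inner product on $\ell_2$ (i.e.\ $\langle u,v\rangle = \Re \sum_j \overline{u_j} v_j$). The crucial structural fact is that multiplication by $i$ is a real-linear isometry on $\ell_2$, so $\|ix\| = \|x\|$, and moreover $\langle ix, x\rangle = \sum_j \Im(\overline{x_j} x_j) = \sum_j \Im |x_j|^2 = 0$. Thus $ix$ is orthogonal (in the real sense) to $x$.

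Next I would decompose $y$ with respect to the real line spanned by $x$: write $y = y_\parallel + y_\perp$ with $y_\parallel \in \mathrm{span}_{\R}(x)$ and $\langle y_\perp, x\rangle = 0$. Since $ix \perp x$ in the real sense, $y_\parallel$ is a real multiple of $x$, so $\langle ix, y_\parallel\rangle = 0$, and therefore
\begin{equation*}
\omega(x,y) = \langle ix, y\rangle = \langle ix, y_\perp\rangle.
\end{equation*}
Applying Cauchy--Schwarz gives $|\omega(x,y)| \leq \|ix\|\,\|y_\perp\| = \|x\|\,\|y_\perp\|$.

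Finally, $\|y_\perp\| = \|y\||\sin\theta|$ by the definition of the exterior angle $\theta$ between $x$ and $y$ in the real Hilbert space structure of $\ell_2$, which yields \eqref{symplectic-estimate}. There is no real obstacle; the only thing to double-check is that the notion of ``exterior angle'' being used is indeed the angle in the underlying real Hilbert space (which it must be, since that is the only geometric angle available for two vectors in $\ell_2$). The whole argument is essentially two lines once one notes that $ix \perp x$ for the real inner product.
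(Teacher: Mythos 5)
Your proof is correct, and it takes a genuinely different (and somewhat cleaner) route than the paper's. The paper projects onto the two-dimensional real subspace $V$ spanned by $x$ and $ix$, notes $\omega(x,y)=\omega(x,P(y))$, and then identifies $|\omega(x,y)|$ with the area of $P(Q)$, the image under $P$ of the parallelogram $Q$ in the real span of $x,y$; the $1$-Lipschitz property of the orthogonal projection $P$ then gives $|P(Q)|\le |Q| = \|x\|\|y\||\sin\theta|$. Your argument avoids the area interpretation entirely: you write $\omega(x,y)=\langle ix,y\rangle$, observe $ix\perp x$ for the real inner product (so only the component $y_\perp$ of $y$ orthogonal to $x$ contributes), and then a single application of Cauchy--Schwarz together with $\|y_\perp\|=\|y\||\sin\theta|$ closes the argument. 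Both proofs rest on the same underlying facts (real-linearity and isometry of multiplication by $i$, $ix\perp x$), but yours replaces the ``projection is $1$-Lipschitz $\Rightarrow$ areas do not increase'' step with a direct Cauchy--Schwarz after an orthogonal split, which is a bit more elementary and sidesteps the need to argue that $P|_W$ is an isomorphism and handle the degenerate case separately.
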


\begin{proof}
  Let $V$ denote the 2-dimensional subspace spanned by $x$ and $ix$, and let $P : \ell_2 \to V$ denote the orthogonal projection onto $V$.  Then
  \begin{align*}
    \omega(x,y) = \omega(x,P(y)).
  \end{align*}
  If $x$ and $P(y)$ lie in a one-dimensional subspace, then $\omega(x,y) = 0$ and there is nothing to prove.  Thus, we may suppose $x$ and $P(y)$ span all of $V$.  If we define $W$ as the subspace in $\ell_2$ spanned by $x$ and $y$, we then have that $P|_W$ is an isomorphism.  It is also 1-Lipschitz as it is the restriction of an orthogonal projection.  As $\|x\|\|y\| |\sin \theta|$ is the area of the parallelogram $Q$ in $W$ with edges $x$ and $y$, we see that $|P(Q)| \leq |Q|$.  The lemma then follows once we see that $|P(Q)| = |\omega(x,y)|$.
\end{proof}

We now define the Koranyi norm on $\H_\infty$ analogously as before
\begin{align*}
  N(x,s) = \left(\|x\|^4 + s^2\right)^{1/4},
\end{align*}
where $\|\cdot\|$ is the standard $\ell_2$ norm and define the Koranyi metric as $d(x,y) = N(x^{-1}y)$.  

Note that the normal 3-dimensional Heisenberg group $\H$ equipped with its Koranyi norm embeds isometrically into $\H_\infty$ by
$$(x,y,z) \mapsto (x+iy,0,0,...,z).$$
Thus, the Laakso graphs embed into $\H_\infty$ with power $1/4$ and so $\H_\infty$ is not Markov $p$-convex for any $p < 4$.  We now let $\pi : \H_\infty \to \ell_2$ denote the homomorphic projection to $\ell_2$.  

We will follow the notation and terminology of \cite{matousek} and say that $P_n$ is the metric space $(\{1,...,n\},d_\Z)$.  Recall that $(X,d)$ is said to be $D$-biLipschitz equivalent to $(Y,\rho)$ if there exists some bijection $f : X \to Y$ and $s > 0$ so that
\begin{align*}
  s \cdot d_X(x,y) \leq d_Y(f(x),f(y)) \leq Ds \cdot d_X(x,y), \qquad \forall x,y \in X.
\end{align*}
A $\delta$-fork in $\H_\infty$ is a set $\{z_0,z_1,z_2,z_2'\}$ such that $\{z_0,z_1,z_2\}$ and $\{z_0,z_1,z_2'\}$ are both $(1+\delta)$-biLipschitz to $P_3$.  The following lemma tells us that if we have points in $\H_\infty$ that are $(1+\delta)$-biLipschitz to $P_3$, then they must be very straight and flat.

\begin{lemma} \label{small-angle}
  Let $\delta \in \left(0,10^{-100}\right)$ and $z = (x,s)$, $z'=(y,t)$ be elements in $\H_\infty$ such that $\{z,0,z'\}$ is $(1+\delta)$-biLipschitz to $P_3$.  Let
  \begin{align*}
    \eta := \frac{NH(z)}{N(z)}, \qquad  \nu := \frac{NH(z')}{N(z')},
  \end{align*}
  and $\theta$ be the exterior angle between $x$ and $y$ in $\ell_2$.  Then
  \begin{align*}
    |\theta| \leq 400\delta^{1/2}, \qquad \eta \leq 20\delta^{1/4}, \qquad \nu \leq 20\delta^{1/4}.
  \end{align*}
\end{lemma}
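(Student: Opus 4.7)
The plan is to apply the four-point convexity inequality of Proposition~\ref{convex} to the triple $(z, 0, z')$ and extract all three desired bounds from the resulting coordinate estimates. After rescaling we may assume $d(z,0),\, d(0,z') \in [1, 1+\delta]$ and $d(z,z') \in [2, 2(1+\delta)]$ (the relevant case for forks, where $0$ is the middle element of the embedded $P_3$). Set $u = \|x\|$, $v = \|y\|$, $w = \|y-x\|$, and $\gamma = t - s - \tfrac{1}{2}\omega(x,y)$, so that $N(z)^4 = u^4 + s^2$, $N(z')^4 = v^4 + t^2$, $d(z,z')^4 = w^4 + \gamma^2$, and $NH(z^{-1}z') = |\gamma|^{1/2}$.

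Applying Proposition~\ref{convex} with $0$ in the middle, the left side is at most $(1+\delta)^4$ while $(d(z,z')/2)^4 \geq 1$, so
\[
d\!\left(\tfrac{z+z'}{2},\, 0\right)^4 + \tfrac{1}{16} NH(z^{-1}z')^4 \leq (1+\delta)^4 - 1 \leq 5\delta.
\]
Expanded in coordinates this yields the three fundamental estimates $\|x+y\|^4 \leq 80\delta$, $(s+t)^2 \leq 20\delta$, and $\gamma^2 \leq 80\delta$. Since $w^4 + \gamma^2 \geq 16$, the third estimate forces $w \geq 2 - O(\delta)$. Combined with $u, v \leq 1+\delta$ (from $u^4 \leq N(z)^4$) and the Euclidean triangle inequality $u + v \geq \|y-x\| = w$, both $u$ and $v$ lie in $[1 - O(\delta),\, 1 + \delta]$, so $uv \geq \tfrac{1}{2}$ with room to spare.

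The critical step is the sharp angle bound. Use the identity
\[
(u+v)^2 - w^2 = 2uv(1 - \cos\theta),
\]
whose left side equals $(u+v-w)(u+v+w) = O(\delta)$ thanks to the squeeze from the previous step. Hence $1 - \cos\theta = O(\delta)$, which for small $\theta$ gives $\theta^2 = O(\delta)$ and so $|\theta| \leq 400\sqrt{\delta}$ with room to spare. Lemma~\ref{symplectic-projection} then gives $|\omega(x,y)| \leq uv|\sin\theta| = O(\sqrt{\delta})$, and combining $|s + t| = O(\sqrt{\delta})$ with $|t - s| \leq |\gamma| + |\omega(x,y)|/2 = O(\sqrt{\delta})$ yields $|s|,\, |t| = O(\sqrt{\delta})$. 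Since $N(z),\, N(z') \geq 1$, we obtain $\eta^4 = s^2/N(z)^4 = O(\delta)$ and similarly for $\nu$, giving $\eta,\, \nu = O(\delta^{1/4})$ with constants comfortably below $20$.

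The main obstacle is precisely the sharpness of this angle bound. A naive route via $\|x+y\|^2 \geq 2uv(1 - \cos\theta)$ uses only the convexity-derived bound $\|x+y\|^4 \leq 80\delta$, giving $\|x+y\|^2 = O(\sqrt{\delta})$ and hence only $\theta = O(\delta^{1/4})$; this would propagate to $\eta = O(\delta^{1/8})$, far too weak. The improvement comes from noticing that $w$ itself is pinned near $2$ via the $\gamma^2$ bound, and combining this with the crude upper bound $u + v \leq 2 + O(\delta)$ squeezes $(u+v)^2 - w^2$ down to order $\delta$---exactly what is needed for all three conclusions to land with the correct powers of $\delta$.
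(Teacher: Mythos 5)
Your proof is correct, and it takes a genuinely different route from the paper's. The paper's argument is a direct coordinate computation with a three-case analysis: it first rules out $\theta > \pi/2$ and $\max\{\eta,\nu\}$ large by crude estimates, and then in the remaining case expands $\|x-y\|^4$ via the law of cosines, applies concavity estimates such as $\|x\|^{4q}\le 1-qs^2$ and $\cos\theta\le 1-\theta^2/4$, and packages everything into an upper bound for $d(z,z')^4$ of the form $16+O(\delta)-c(s^2+t^2+\theta^2)$, which then contradicts $d(z,z')\ge 2(1-\delta)^2$ unless $s^2,t^2,\theta^2=O(\delta)$. You instead feed $(z,0,z')$ into Proposition~\ref{convex} itself, which hands you the three clean estimates $\|x+y\|^4\lesssim\delta$, $(s+t)^2\lesssim\delta$, and $\gamma^2\lesssim\delta$ in a single stroke, with no case analysis required; the key twist you correctly identify is that the sharp $\theta=O(\sqrt{\delta})$ bound comes not from $\|x+y\|$ (which only gives $\theta=O(\delta^{1/4})$) but from pinning $w=\|x-y\|$ near $2$ via the $\gamma^2$ bound and then squeezing $(u+v)^2-w^2=(u+v-w)(u+v+w)$. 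What your approach buys is conceptual unification --- the same pointwise 4-convexity inequality that drives Theorem~\ref{4-convex} also drives the fork rigidity --- and a shorter argument; what the paper's approach buys is self-containment within the section and no dependence on Proposition~\ref{convex}. Your constants also land comfortably inside the stated bounds ($\theta\lesssim 11\sqrt{\delta}$, $\eta,\nu\lesssim 4\delta^{1/4}$), so the claim of ``room to spare'' is justified. One small presentational point: when passing from $1-\cos\theta=O(\delta)$ to $\theta^2=O(\delta)$ you should note that $1-\cos\theta\ge 2\theta^2/\pi^2$ uniformly on $[0,\pi]$, which simultaneously forces $\theta$ to be small and supplies the quantitative bound without any a priori smallness assumption on $\theta$.
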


\begin{proof}
  As all the quantities $\eta,\nu,\theta$ do not change under dilation, we may suppose without loss of generality that $N(z) = 1$ and so as $\delta \leq 10^{-100}$, we have that
  \begin{align}
    N(z') &\in [(1-\delta)^2,(1+\delta)^2] \notag \\
    d(z,z') &\in [2(1-\delta)^2,2(1+\delta)^2]. \label{delta-ineq}
  \end{align}
  Note then that
  \begin{align}
    \|x\| &= N(z)(1-\eta^4)^{1/4} = (1-\eta^4)^{1/4}, \notag \\
    \|y\| &= N(z')(1-\nu^4)^{1/4} \leq (1+\delta)^2, \label{x-y-bounds}
  \end{align}
  as well as
  \begin{align}
    |s| &= \eta^2 N(z)^2 = \eta^2, \notag \\
    |t| &= \nu^2 N(z')^2 \leq (1+\delta)^4 \nu^2. \label{s-t-bounds}
  \end{align}

  {\bf Case 1:} $\theta > \pi/2$.

  We get by the law of cosines that
  \begin{align*}
    d(z,z')^4 &\overset{\eqref{symplectic-estimate}}{\leq} \|x-y\|^4 + \left( |s| + |t| + \frac{1}{2} \|x\| \|y\| |\sin \theta| \right)^2 \\
    &\leq 4(1+\delta)^8 + \left( \eta^2 + (1+\delta)^4 \nu^2 + \frac{1}{2} (1+\delta) |\sin \theta| \right)^2.
  \end{align*}
  Here, we've used the fact that if the interior angle of $x$ and $y$ is less than $\frac{\pi}{2}$, then $\|x-y\|$ cannot be larger than $\max\{\|x\|,\|y\|\} \sqrt{2 + 2\cos \frac{\pi}{2}} \leq (1+\delta)^2 \sqrt{2}$.  Continuing, we get
  \begin{multline*}
    d(z,z')^4 \leq (1+\delta)^8 \left[ 4 + \left( \eta^2 + \nu^2 + \frac{1}{2} |\sin \theta| \right)^2 \right] \\
    \leq (1+\delta)^8 \left[ 4 + \left( 2 + \frac{1}{2} |\sin \theta| \right)^2 \right]
  \end{multline*}
  As $\delta \leq 10^{-100}$, one sees that
  \begin{align*}
    d(z,z')^4 \leq (1+\delta)^8 \left[ 4 + \left( 2 + \frac{1}{2} |\sin \theta| \right)^2 \right] < (2-2\delta)^4, \qquad \forall \theta \in \left( \frac{\pi}{2} ,\pi \right],
  \end{align*}
  a contradiction of \eqref{delta-ineq}.

  {\bf Case 2:} $\max\{\nu,\eta\} > \left( \frac{15}{16} \right)^{1/4}$

  We suppose without loss of generality that $\eta = \max\{\nu,\eta\} > \left( \frac{15}{16} \right)^{1/4}$.  Thus, we get that
  \begin{align*}
    d(z,z')^4 &\overset{\eqref{symplectic-estimate}}{\leq} \|x-y\|^4 + \left( |s| + |t| + \frac{1}{2} \|x\| \|y\| \sin \theta \right)^2 \\
    &\overset{\eqref{s-t-bounds}}{\leq} (\|x\| + \|y\|)^4 + \left(\eta^2 + (1+\delta)^4 \eta^2 + \frac{1}{2} (1+\delta)(1-\eta^2)^{1/4} \right)^2 \\
    &\overset{\eqref{x-y-bounds}}{\leq} \left((1-\eta^4)^{1/4} + (1+\delta)^2\right)^4 + (1+\delta)^8 \left(2\eta^2 + \frac{1}{2} (1-\eta^4)^{1/4} \right)^2 \\
    &\leq (1+\delta)^8 \left[ \left((1-\eta^4)^{1/4} + 1\right)^4 + \left(2\eta^2 + \frac{1}{2}(1-\eta^4)^{1/4} \right)^2 \right].
  \end{align*}
  Note that $(1- \eta^4)^{1/4} \leq 1/2$.  As $\delta \leq 10^{-100}$, one gets for all $\eta \in \left( \left( \frac{15}{16} \right)^{1/4},1\right]$ that
  \begin{multline*}
    d(z,z')^4 \leq (1+\delta)^8 \left[ \left((1-\eta^4)^{1/4} + 1\right)^4 + \left(2\eta^2 + \frac{1}{2}(1-\eta^4)^{1/4} \right)^2 \right] \\
    < (2-2\delta)^4,
  \end{multline*}
  a contradiction of \eqref{delta-ineq}.

  {\bf Case 3:} $\theta \leq \pi/2$ and $\max \{\nu,\eta\} \leq \left( \frac{15}{16} \right)^{1/4}$.

  Note that we have proven that this is the only valid case, that is, if $\{z,0,z'\}$ is $(1+\delta)$-biLipschitz to $P_3$, then the exterior angle has to be less than $\pi/2$ and $\nu$ and $\eta$ cannot be too large.

  As $\delta \leq 10^{-100}$, we then get from the fact that $x \mapsto x^q$ is concave whenever $q \in [0,1]$ that
  \begin{align}
    \|x\|^{4q} &= (1-s^2)^q \leq 1 - qs^2, \notag \\
    \|y\|^{4q} &\overset{\eqref{delta-ineq}}{\leq} ((1+\delta)^4-t^2)^q \leq (1+5\delta-t^2)^q  \notag \\
    &\quad\leq (1-t^2)^q + \frac{q}{(1-t^2)^{1-q}} 10\delta \overset{\eqref{s-t-bounds}}{\leq} 1-qt^2 + 1000q\delta. \label{x-y-qbounds}
  \end{align}
  Here we used the fact that $|t| \leq (1+\delta)^2 \nu^2$.  By looking at the formula for the Koranyi norm, we have
  \begin{align}
    d(z,z')^4 \overset{\eqref{symplectic-estimate} \wedge \eqref{x-y-bounds}}{\leq} \|x - y\|^4 + \left( |t| + |s| + \frac{1}{2}(1+\delta) \sin\theta \right)^2. \label{d-x-x'}
  \end{align}

  By the law of cosines, we have that
  \begin{align}
    \|x - y\|^4 &= \left( \|x\|^2 + \|y\|^2 + 2\|x\|\|y\| \cos \theta \right)^2 \notag \\
    &= \|x\|^4 + \|y\|^4 + \|x\|^2\|y\|^2 (2 + 4 \cos^2 \theta) \notag \\
    &\qquad+ 4( \|x\|^3\|y\| + \|x\|\|y\|^3) \cos\theta \notag \\
    &\overset{\eqref{x-y-qbounds}}{\leq} 2+5\delta-t^2 -s^2 + \|x\|^2\|y\|^2 (2 + 4 \cos^2 \theta) \notag \\
    &\qquad+ 4 ( \|x\|^3\|y\| + \|x\|\|y\|^3 ) \cos\theta \label{e:xy-more-optimal} \\
    &\overset{\eqref{x-y-qbounds}}{\leq} 2+5\delta-t^2 -s^2 + \|x\|^2\|y\|^2 (2 + 4 \cos^2 \theta) + 4(2+2\delta) \cos\theta \label{e:xy-more-optimal-2} \\
    &\leq 10+13\delta-t^2 -s^2 - 2\theta^2 + \|x\|^2\|y\|^2 (2 + 4 \cos^2 \theta) \label{use-concavity} \\
    &\overset{\eqref{x-y-qbounds}}{\leq} 10+13\delta-t^2 -s^2 - 2\theta^2 \notag\\
    &\qquad+ \left( 1- \frac{s^2}{2} \right) \left( 1 - \frac{t^2}{2} + 500\delta\right) (2 + 4 \cos^2 \theta) \notag \\
    &\leq 10+ 13\delta-t^2 -s^2 - 2\theta^2 \notag \\
    &\qquad+ \left( 1- \frac{s^2}{2} - \frac{t^2}{2} + \frac{s^2t^2}{4} + 500\delta\right) (6 -  \theta^2) \label{use-cos} \\
    &\leq 16+4000\delta- 4t^2 -4s^2 - \left( 3 - \frac{s^2+t^2}{2} \right) \theta^2 + \frac{3}{2} s^2t^2. \notag
  \end{align}
  In \eqref{e:xy-more-optimal}, we used \eqref{x-y-qbounds} for $q = 1$ to bound
  \begin{align*}
    \|x\|^4 + \|y\|^4 \leq 1- s^2 + (1+\delta)^4 - t^2 \leq 2 + 5\delta - s^2 - t^2.
  \end{align*}
  Similarly, in \eqref{e:xy-more-optimal-2}, we bounded
  \begin{align*}
    \|x\|^3 \|y\| + \|x\| \|y\|^3 \leq 1 \cdot ((1 + \delta)^4 - t^2)^{1/4} + 1^3 \cdot ((1+\delta)^4 - t^2)^{3/4} \leq 2 + 2\delta.
  \end{align*}
  In \eqref{use-concavity}, we used the fact that $\cos \theta \leq 1 - \frac{\theta^2}{4}$ for $\theta \in [0,\frac{\pi}{2}]$.  In \eqref{use-cos}, we used the fact that
  \begin{align*}
    2 + 4 \cos^2 \theta \leq 6 - \theta^2, \qquad \forall \theta \in \left[-\frac{\pi}{2}, \frac{\pi}{2} \right].
  \end{align*}

  Together with \eqref{d-x-x'}, we get that
  \begin{align*}
    d(z,z')^4 &\leq 16+ 4000\delta- 4t^2 -4s^2 - \left(3 - \frac{s^2+t^2}{2} \right)\theta^2 + \frac{3}{2} s^2t^2 \\
    &\qquad+ \left( |t| + |s| + \frac{1}{2} (1+\delta) \sin \theta \right)^2 \\
    &\leq 16+ 4000\delta-4 t^2 -4 s^2 - \left(3 - \frac{s^2 + t^2}{2} \right)\theta^2 + \frac{3}{2}s^2t^2 \\
    &\qquad+ \left( |t| + |s| + \frac{2}{3} |\theta| \right)^2 \\
    &\leq 16+4000\delta-4 t^2 -4 s^2 - \left(3 - \frac{s^2 + t^2}{2} \right)\theta^2 + \frac{3}{2}s^2t^2 \\
    &\qquad+ 3\left( t^2 + s^2 + \frac{4}{9} \theta^2 \right) \\
    &\leq 16+4000\delta-t^2 -s^2 - \left( 3 - \frac{s^2 + t^2}{2}  - \frac{4}{3} \right) \theta^2 + \frac{3}{2}s^2t^2 \\
    &= 16+4000\delta- \left(1 - \frac{3}{4}s^2 \right) t^2 - \left(1 - \frac{3}{4} t^2 \right) s^2 \\
    &\qquad- \left( 3 - \frac{s^2 + t^2}{2}  - \frac{4}{3} \right)\theta^2.
  \end{align*}
  As $s^2 \leq \eta^4 \leq 1$ and $t^2 \leq (1+\delta)^8 \nu^4 \leq (1+\delta)^8$, and so we get that
  \begin{align*}
    d(z,z') \leq \left( 16+ 4000\delta- \frac{1}{5} ( t^2 + s^2 + \theta^2 ) \right)^{1/4}
  \end{align*}
  Thus, as $\eta = |s|^{1/2}$ and $\nu \leq \frac{|t|^{1/2}}{(1-\delta)^2}$, we see that if the conclusion of the lemma are not satisfied (that is $\eta^4$, $\nu^4$, or $\theta^2$ are large compared to $\delta$), then
  \begin{align*}
    d(z,z') < 2-2\delta,
  \end{align*}
  a contradiction of \eqref{delta-ineq}.
\end{proof}

We can then prove the following lemma, which says that if we have a $\delta$-fork in $\H_\infty$ and the tips are not too non-horizontal, then the tips actually collapse by a factor of $\delta^{1/2}$.

\begin{lemma} \label{collapse-convex}
  If $\delta \in (0,10^{-100})$ and $\{z_0,z_1,z_2,z_2'\}$ is a $\delta$-fork in $\H_\infty$ such that if $NH(z_2^{-1}z_2') < \frac{1}{2} d(z_2,z_2')$, then
  \begin{align*}
    d(z_2,z_2') \leq 2000 \delta^{1/2} \cdot d(z_0,z_1).
  \end{align*}
\end{lemma}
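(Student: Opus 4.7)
The plan is to reduce by left-translation and dilation to the normalized setting $z_1 = 0$ and $N(z_0) = d(z_0, z_1) = 1$; since dilation $\delta_\lambda$ scales both $d$ and $NH$ by $\lambda$, the stated inequality is scale-invariant and this is harmless. In the normalized picture, both triples $\{z_0, 0, z_2\}$ and $\{z_0, 0, z_2'\}$ are $(1+\delta)$-biLipschitz to $P_3$, so I would apply Lemma \ref{small-angle} to each of them (with $z = z_0$, $z' = z_2$ and $z' = z_2'$ respectively).

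The two applications of Lemma \ref{small-angle} produce three kinds of estimates. First, $NH(z_2)/N(z_2) \leq 20\delta^{1/4}$ and $NH(z_2')/N(z_2') \leq 20\delta^{1/4}$, from which (since $N(z_2), N(z_2') \in [(1-\delta)^2, (1+\delta)^2]$) I can read off that both $\|\pi(z_2)\|$ and $\|\pi(z_2')\|$ lie within $O(\delta)$ of $1$, and also that the vertical coordinates $t_2, t_2'$ of $z_2, z_2'$ satisfy $|t_j| = NH(z_j)^2 \leq O(\delta^{1/2})$. Second, the exterior angle between $\pi(z_0)$ and each of $\pi(z_2), \pi(z_2')$ is at most $400\delta^{1/2}$. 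By the triangle inequality for angles in $\ell_2$, the angle between $\pi(z_2)$ and $\pi(z_2')$ themselves is then at most $800\delta^{1/2}$. Combining the near-equality of the norms with the smallness of the angle via the law of cosines,
\begin{align*}
  \|\pi(z_2) - \pi(z_2')\|^2 &= (\|\pi(z_2)\| - \|\pi(z_2')\|)^2 + 2\|\pi(z_2)\|\|\pi(z_2')\|\bigl(1 - \cos\angle(\pi(z_2),\pi(z_2'))\bigr) \\
  &\leq O(\delta^2) + 2(1+O(\delta))\cdot O(\delta) = O(\delta),
\end{align*}
so $\|\pi(z_2) - \pi(z_2')\| \leq C_0 \delta^{1/2}$ for some explicit constant $C_0$.

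The final step is where the hypothesis $NH(z_2^{-1}z_2') < \frac{1}{2}d(z_2, z_2')$ enters, and this is what separates the $\delta^{1/2}$ regime from the naive $\delta^{1/4}$ one. Writing $\tau$ for the vertical coordinate of $z_2^{-1}z_2'$, one has $NH(z_2^{-1}z_2')^4 = \tau^2$ and $d(z_2, z_2')^4 = \|\pi(z_2) - \pi(z_2')\|^4 + \tau^2$. The hypothesis, after raising to the fourth power, reads $\tau^2 < \tfrac{1}{16}d(z_2,z_2')^4$, and substituting in gives $\tfrac{15}{16}\tau^2 \leq \tfrac{1}{16}\|\pi(z_2) - \pi(z_2')\|^4$, hence
\begin{align*}
  d(z_2, z_2')^4 \leq \tfrac{16}{15} \|\pi(z_2) - \pi(z_2')\|^4.
\end{align*}
Thus $d(z_2, z_2') \leq (16/15)^{1/4} C_0 \delta^{1/2}$, which upon tracking constants comfortably fits under $2000\delta^{1/2}$.

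The only real subtlety is keeping the geometry straight in paragraph two: Lemma \ref{small-angle} is about the exterior angle with respect to $\pi(z_0)$ being near zero (i.e., $\pi(z_2)$ and $\pi(z_2')$ are each close to being anti-parallel to $\pi(z_0)$), which is precisely what forces them to be nearly parallel to \emph{each other}, and the constants follow by additivity of exterior angles. Everything else is a routine chase of the Koranyi-norm formula, using the non-horizontality hypothesis to suppress the contribution of $\tau$ at the very end.
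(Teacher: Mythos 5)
Your proposal is correct and follows essentially the same route as the paper: normalize to $z_1=0$ and $d(z_0,z_1)=1$, apply Lemma \ref{small-angle} to both triples $\{z_0,0,z_2\}$ and $\{z_0,0,z_2'\}$ to get the angle and norm estimates on $\pi(z_2),\pi(z_2')$, and use the non-horizontality hypothesis to bound $d(z_2,z_2')$ by a fixed multiple of $\|\pi(z_2)-\pi(z_2')\|$. The only cosmetic differences are that the paper bounds $\|\pi(z_2)-\pi(z_2')\|$ via an auxiliary point $y=\|x_2\|\,x_2'/\|x_2'\|$ and the triangle inequality instead of writing out the law of cosines, and that you tracked the slightly sharper factor $(16/15)^{1/4}$ where the paper rounds up to $2$; neither changes the argument.
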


\begin{proof}
  We may suppose $z_1 = 0$ and $d(z_0,z_1) = 1$.  Let $z_2 = (x_2,t_2)$ and $z_2' = (x_2',t_2')$.  As
  \begin{multline*}
    \left(t_2 - t_2' + \frac{1}{2} \omega(\pi_1(x_2),\pi_1(x_2')) \right)^2 = NH(z_2^{-1}z_2')^4 \leq \frac{1}{16} d(z_2,z_2')^4 \\
    = \frac{1}{16} \|x_2 - x_2'\|^4 + \frac{1}{16} \left( t_2 - t_2' + \frac{1}{2} \omega(\pi_1(x_2),\pi_1(x_2')) \right)^2.
  \end{multline*}
  This tells us that (with some non-optimal estimates)
  \begin{align*}
    \left| t_2 - t_2' + \frac{1}{2} \omega(\pi_1(x_2),\pi_1(x_2')) \right|^2 \leq 15 \|x_2 - x_2'\|^4,
  \end{align*}
  and so
  \begin{align}
    d(z_2,z_2') \leq 2 \|x_2 - x_2'\|. \label{hilbert-d-bound}
  \end{align}
  From Lemma \ref{small-angle}, and the fact that $\{z_0,0,z_2\}$ and $\{z_0,0,z_2'\}$ are both $(1+\delta)$-biLipschitz to $P_3$, we know that the exterior angles $x_2$ and $x_2'$ make with the line spanned by $0$ and $x_0$ are less than $400 \delta^{1/2}$.  Thus,
  \begin{align}
    \angle x_2 0 x_2' < 800\delta^{1/2}. \label{delta2-angle}
  \end{align}
  If we set $\eta = \frac{NH(z_2)}{N(z_2)}$ and $\nu = \frac{NH(z_2')}{N(z_2')}$, we also know that
  \begin{align*}
    |t_2| = NH(z_2)^2 = \eta^2 N(z_2)^2 \leq 400(1 + \delta)^2 \delta^{1/2}.
  \end{align*}
  The same bound holds for $|t_2'|$.  Thus,
  \begin{align*}
    \|x_2\| = (N(z_2)^4 - t_2^2)^{1/4} \in (1- 10^{10} \delta, 1 + 10^{10}\delta)
  \end{align*}
  by a first order approximation and the fact that $N(z_2) \in (1-\delta,1+\delta)$.  Again, the same conditions hold for $\|x_2'\|$.  Suppose without loss of generality that $\|x_2\| \leq \|x_2'\|$.  Let $y = \|x_2\| \frac{x_2'}{\|x_2'\|}$.  Then
  \begin{multline*}
    \|x_2 - x_2'\| \leq \|x_2 - y\| + \|y - x_2'\| \overset{\eqref{delta2-angle}}{<} (1 + 10^{10} \delta) 800\delta^{1/2} + 10^{10} \delta \\
    \leq 1000 \delta^{1/2}.
  \end{multline*}
  This, together with \eqref{hilbert-d-bound}, proves the statement.
\end{proof}

We recall some more notation from \cite{matousek}.  The complete $k$-ary tree of depth $h$ is $T_{k,h}$.  As shown in \cite{matousek}, $T_{k,h}$ can be embedded into $B_{2h \lceil \log_2 k \rceil}$ with distortion at most 2.  For a rooted tree $T$, let $SP(T)$ denote the set of all unordered pairs $\{x,y\}$ of vertices of $T$ such that $x$ lies on the path from $y$ to the root.  The following Ramsey-type lemma is Lemma 5 from \cite{matousek}.

\begin{lemma}
  Let $h$ and $r$ be given natural numbers, and suppose that $k \geq r^{(h+1)^2}$.  Suppose that each of the pairs from $SP(T_{k,h})$ is colored by one of $r$ colors.  Then there exists a biLipschitz copy $T'$ of $B_h$ in this $T_{k,h}$ such that the color of any pair $\{x,y\} \in SP(T')$ only depends on the level of $x$ and $y$.
\end{lemma}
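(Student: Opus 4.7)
The plan is to prove the lemma by induction on $h$, exploiting nested pigeonhole in three stages per step. Actually it is cleaner to prove the stronger statement that for every $m \geq 2$, any pair-coloring of $SP(T_{k,h})$ by $r$ colors on a tree with $k \geq m \cdot r^{c(h)}$ (for a quadratic $c(h)$) admits a canonical sub-$T_{m,h}$, where ``canonical'' means the color of a pair depends only on the two levels; the lemma's bound $k \geq r^{(h+1)^2}$ is then easily sufficient for $m=2$, yielding a complete binary subtree of $T_{k,h}$ that is isometric (hence biLipschitz) to $B_h$. The base case $h=0$ is vacuous since $SP(T_{k,0})$ is empty.

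For the inductive step I process the root $v_0$ in three stages. \emph{Stage 1 (recursive canonization of child subtrees).} Each of the $k$ children $c_i$ roots a copy of $T_{k,h-1}$; applying the inductive hypothesis inside each one produces a canonical sub-$T_{k_1,h-1}$ in which internal pair colors depend only on level pairs. \emph{Stage 2 (canonizing pairs with $v_0$).} Inside each such sub-$T_{k_1,h-1}$, color every vertex $x$ by the color of the pair $\{v_0,x\}$; this is now a vertex coloring of a depth-$(h-1)$ tree by $r$ colors, and an auxiliary easy induction (for each frontier vertex, by pigeonhole a $1/r$ fraction of its children share a dominant color; repeat level-by-level) produces a sub-$T_{k_2,h-1}$ with $k_2 \gtrsim k_1/r^h$ in which each level has a uniform vertex color. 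After Stage 2, each child $c_i$ carries a canonical sub-$T_{k_2,h-1}$ that is also level-uniform with respect to $v_0$. \emph{Stage 3 (matching types across children).} Each $c_i$ now has a type consisting of at most $\binom{h+1}{2}$ color values: the color of $\{v_0,c_i\}$, the $h-1$ uniform ``$v_0$-pair'' colors at levels $2,\dots,h$, and the $\binom{h}{2}$ internal level-pair colors of the pruned subtree. This is $r^{O(h^2)}$ possible types, so pigeonhole over the $k$ children yields $k/r^{O(h^2)}$ children with a single common type, whose pruned subtrees glue together into a canonical $T_{k',h}$ with $k' = \min(k_2,\, k/r^{O(h^2)})$.

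Unrolling the recursion gives a bound of the form $k \geq m \cdot r^{h(h+1)/2 + \mathrm{l.o.t.}}$, which is comfortably below the loose bound $r^{(h+1)^2}$ in the statement even when $m = 2$. I expect the main obstacle to lie in Stage 3, specifically in designing the children's ``type'' so that it captures \emph{exactly} the data needed to (a) make the $v_0$-to-descendant colors agree across different chosen children at each level and (b) make the internal canonical colors of the chosen subtrees agree across $i$, so that the resulting binary tree is genuinely canonical as a whole and not merely canonical within each branch. The Stage 2 vertex-Ramsey step is standard but must be done carefully enough to keep the exponent of $r$ at $O(h^2)$ overall.
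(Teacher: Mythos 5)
First, a remark on the frame of reference: the paper does not prove this statement --- it is quoted verbatim as Lemma~5 of \cite{matousek}, with no proof given. So the comparison below is against the standard Matou\v{s}ek-style argument rather than against anything in this paper.

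Your overall plan --- strengthen the statement to produce a canonical $T_{m,h}$, induct on $h$, canonize the children's subtrees, then pigeonhole the children by a ``type'' of size $r^{O(h^2)}$ --- is the right approach and is essentially the standard one, and your accounting that the exponent $c(h)$ satisfies $c(h)=\max\bigl(c(h-1)+O(h),\binom{h+1}{2}\bigr)=\binom{h+1}{2}$ is correct, comfortably inside $(h+1)^2$. But Stage~2, as you have literally written it, has a genuine gap. The procedure ``for each frontier vertex, by pigeonhole a $1/r$ fraction of its children share a dominant color; repeat level-by-level'' only guarantees that each individual vertex's retained children are monochromatic. It does not guarantee that two \emph{different} frontier vertices at the same level chose the \emph{same} dominant color, so the output of that greedy pruning need not have ``each level has a uniform vertex color.'' And your Stage~3 type, which reserves only $h-1$ slots for the ``$v_0$-pair colors at levels $2,\dots,h$,'' is only meaningful if Stage~2 really delivered per-level uniformity; without it, the number of types would blow up to $r^{2^{h}}$ rather than $r^{O(h^2)}$.

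The fix is the same pattern pigeonhole you already use in Stage~3, pushed inside Stage~2: prove the tree vertex-Ramsey lemma recursively, canonizing each child's sub-subtree first and then pigeonholing the children of the current root by the full $(h-1)$-tuple of level colors of their canonized sub-subtrees. This loses $r^{d}$ at depth $d$, giving $k_2\geq k_1/r^{h-1}$, consistent with what you wrote, and genuinely produces a subtree whose $v_0$-pair color depends only on the level. With that repair, the three stages fit together: Stage~1 canonicity survives the Stage~2 pruning (a subtree of a canonical tree is canonical), the Stage~3 type has exactly $1+(h-1)+\binom{h}{2}=\binom{h+1}{2}$ coordinates, and pairs between two different selected children of $v_0$ are not in $SP$ and so impose no constraint. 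You should also note explicitly that the $T'$ you produce is a full subtree (children in $T'$ are children in $T_{k,h}$), so it is isometric to $B_h$ with levels preserved; this is what ``biLipschitz copy'' must mean here for the level-dependence conclusion to be meaningful.
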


\begin{lemma}[Modified path embedding lemma] \label{l:metric-diff}
  For any $\alpha > 0$, there exists a constant $A = A(\alpha)$ with the following property.  Whenever $k \in \N$ and $f$ is a noncontracting mapping of the metric space $P_h$ into some other metric space $(X,d)$ so that $h \geq 2^{A\|f\|_{lip}^\alpha + k}$, then there exists a subspace $Z = \{x, x+\ell, x + 2\ell\} \subseteq P_h$ such that $\ell \geq 2^k$ and if we denote by $f_0$ the restriction of $f$ on $Z$, then $f_0$ is biLipschitz of distortion at most $1+\epsilon$ with
  \begin{align*}
    \epsilon = 10^{-100} \left( \frac{\ell}{d(f(x),f(x+\ell))} \right)^\alpha.
  \end{align*}
\end{lemma}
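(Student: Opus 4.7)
The plan is to refine Matoušek's path embedding lemma (Lemma~3 of \cite{matousek}), where the additional term $+k$ in the exponent $A\|f\|_{lip}^\alpha + k$ accounts for the requirement $\ell \geq 2^k$. Set $L = \|f\|_{lip}$ and, for each scale $j \geq 0$ and each $x$ with $x + 2^j \leq h$, define the dyadic stretch $\rho_j(x) := d(f(x), f(x+2^j))/2^j$. Since $f$ is noncontracting and $L$-Lipschitz, $\rho_j(x) \in [1, L]$. The triangle inequality at the midpoint yields $\rho_{j+1}(x) \leq (\rho_j(x) + \rho_j(x+2^j))/2$, so the defect $\Delta_j(x) := \rho_j(x) + \rho_j(x+2^j) - 2\rho_{j+1}(x)$ is non-negative. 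An elementary computation shows that the triple $\{x, x+2^j, x+2^{j+1}\}$ maps to a $(1+C\epsilon)$-biLipschitz copy of $P_3$ whenever both $\Delta_j(x)$ and $|\rho_j(x) - \rho_j(x+2^j)|$ are at most $\epsilon\, \rho_{j+1}(x)$.

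The first main step is a telescoping estimate: summing the sub-averaging property across dyadic scales $j \in [k, J]$ and valid starting points $x$ yields a total defect budget of order $hL$, since $\rho$ cascades monotonically on average within the bounded range $[1,L]$. Starting the telescoping at $j = k$ is the only place where the restriction $\ell \geq 2^k$ enters; it reduces the effective length by a factor of $2^k$ but does not affect the total defect bound.

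The second main step is a stratified pigeonhole. Partition the range $[1, L]$ of $\rho$ values into dyadic bands $I_r = [2^r, 2^{r+1})$ for $r = 0, 1, \ldots, \lceil \log_2 L \rceil$. Within band $I_r$ the target tolerance from the statement is $\epsilon_r = 10^{-100}\, 2^{-r\alpha}$. From the defect budget and the number of available $(j, x)$ pairs with $\rho_{j+1}(x) \in I_r$, a standard averaging argument produces a scale $j \geq k$ and a position $x$ with $\Delta_j(x) \leq \epsilon_r \rho_{j+1}(x)$, provided that $J - k \geq A' 2^{r\alpha}$. Choosing $J$ so this holds simultaneously in every band gives $J \geq k + AL^\alpha$, i.e.\ $h \geq 2^{AL^\alpha + k}$, as required.

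Finally, the defect bound controls the sum $\rho_j(x) + \rho_j(x+2^j)$ but not the individual halves, so a symmetrization pass is needed: apply the same pigeonhole argument one scale lower inside the selected triple to force the two half-stretches to match to the same tolerance. This costs only a bounded multiplicative factor in $h$ and is absorbed into the constant $A$. The principal obstacle lies in the stratification step: because the target tolerance $\epsilon$ is coupled to the unknown stretch $\rho$, a uniform pigeonhole produces only the original (uncoupled) Matoušek bound; running the pigeonhole separately in each dyadic band and summing the scale requirements $2^{r\alpha}$ over $r$ is precisely what generates the exponent $L^\alpha$, and is the source of all $\alpha$-dependence in the conclusion.
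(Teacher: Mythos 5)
Your observation that the extra $+k$ in the exponent buys exactly the $k$ dyadic scales needed to insist on $\ell \geq 2^k$ is the same one the paper makes, but from there the two arguments diverge sharply. The paper's proof is one sentence: run Matou\v{s}ek's original argument from Lemma~6 of \cite{matousek} (you cite Lemma~3; the path-embedding lemma is Lemma~6) on the \emph{monotone} sequence $K(2^i) := \max_x d(f(x),f(x+2^i))/2^i$, which decreases from $\leq L$ to $\geq 1$, partition $[1,L]$ into $\sim L^\alpha$ intervals of relative width $\sim \beta x^{-\alpha}$, and pigeonhole to find two consecutive values $K(2^i), K(2^{i+1})$ in the same interval with $i \geq k$; the maximizing $x$ for $K(2^{i+1})$ then gives the near-isometric triple with $\ell = 2^i$.

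Your rewrite replaces the monotone max-stretch sequence by pointwise defects $\Delta_j(x)$ averaged over all $(j,x)$, and there is a gap in the ``stratified pigeonhole.'' The telescoping supplies only a \emph{global} defect budget $\sum_{j,x}\Delta_j(x) \lesssim hL$, whereas to get a pair in band $I_r$ with defect $\lesssim 2^{-r\alpha}\rho_{j+1}(x)$ by averaging you need a \emph{per-band} budget $\lesssim h \cdot 2^r$, which you have not established. Under the naive global budget, if $\alpha < 1$ and the bad hypothesis $\Delta_j(x) > 10^{-100}\rho_{j+1}(x)^{1-\alpha}$ held for every pair, the contradiction would only force $J-k \lesssim L$, not $L^\alpha$; the phrase ``summing the scale requirements $2^{r\alpha}$ over $r$'' also does not parse, since scales $j$ are not allocated to bands---the band of a pair $(j,x)$ depends on $x$ as well as $j$. (For $\alpha \geq 1$, including the paper's application with $\alpha = 2$, the crude budget does close; but the lemma is asserted for all $\alpha > 0$.) You also wave through the ``symmetrization pass'' needed to control $|\rho_j(x)-\rho_j(x+2^j)|$; in Matou\v{s}ek's version this is automatic because the chosen $x$ is a \emph{maximizer} for $K(2^{i+1})$, so each half-stretch is sandwiched between $(1-O(\epsilon))K(2^{i+1})$ and $K(2^i) = (1+O(\epsilon))K(2^{i+1})$. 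That maximizer structure, not averaging, is what makes the original argument clean, and it is what the paper leans on.
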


\begin{proof}
  The proof is almost exactly the same as the proof of Lemma 6 of \cite{matousek}, which we assume the reader is familiar with.  Here, we've fixed $\beta = 10^{-100}$.  The fact that we start with $h \geq 2^{A\|f\|_{lip}^\alpha + k}$ allows us to ensure that the two consecutive values of $K(2^i)$ and $K(2^{i+1})$ that lie in the same interval $[x_{j+1},x_j)$ can be chosen so that $i \geq k$.
\end{proof}

The next lemma says that, given sufficiently many vectors in $\ell_2$ of bounded length, there must be two vectors with small symplectic value.

\begin{lemma} \label{symplectic-collapse}
  There exists $\ell_0 > 0$ so that if $\ell \geq \ell_0$ and $\{z_i\}_{i=1}^N$ is a set of vectors in $\ell_2$ for which
  \begin{align*}
    N &\geq \frac{2^{\ell/2}}{16 \log \ell}, \\
    \|z_i\| &\leq \ell (\log \ell)^{1/2}, \qquad \forall i \in \{1,...,2^\ell\}.
  \end{align*}
  Then there exists $i \neq j$ so that
  \begin{align*}
    |\omega(z_i,z_j)| \leq \frac{1}{4} \ell^2.
  \end{align*}
\end{lemma}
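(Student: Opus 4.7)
The strategy is a pigeonhole reduction to finite dimension organized by the complex singular value decomposition. By Lemma~\ref{symplectic-projection} applied to $z_j$ and $z_i-z_j$ (using $\omega(z_j,z_j)=0$), we have $|\omega(z_i,z_j)| \leq \|z_j\|\,\|z_i-z_j\|$, so with $R:=\ell\sqrt{\log\ell}$ and $A:=\ell^2/4$ it suffices to exhibit two of the $z_i$ at distance $\leq A/R = \ell/(4\sqrt{\log\ell})$ in $\ell_2$. The ball $B_R\subset \mathbb{C}^d$ admits a covering by $\lesssim(R^2/A)^{2d} = (8\log\ell)^{2d}$ balls of radius $A/(2R)$, and for $d\approx \ell/(4\log\log\ell)$ this count is comparable to $N\sim 2^{\ell/2}$, so a pigeonhole in $\mathbb{C}^d$ would succeed were the vectors to live in such a subspace. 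The real work is to reduce to this effective dimension.

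My plan is an iterative scheme that peels off one principal complex direction at a time. Set $S_0:=\{1,\ldots,N\}$ and $z_i^{(0)}:=z_i$. At step $k$, let $\sigma_k$ be the top singular value (with leading complex singular vector $v_k \in \ell_2$) of the operator $e_i\mapsto z_i^{(k)}$ defined on $\mathbb{C}^{|S_k|}$. If $\sigma_k^2\leq |S_k|(A/2)^2/R^2$, then the Frobenius inequality $\|\Omega^{(k)}\|_F^2 \leq \sigma_k^2 \sum_{i\in S_k}\|z_i^{(k)}\|^2 \leq |S_k|^2 (A/2)^2$ shows that the average of $|\omega(z_i^{(k)},z_j^{(k)})|^2$ over pairs in $S_k$ is at most $(A/2)^2$, whence some pair satisfies $|\omega(z_i^{(k)},z_j^{(k)})|\leq A/2$. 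Otherwise, setting $\alpha_i^{(k)}:=\langle v_k, z_i^{(k)}\rangle_\mathbb{C} \in B_\mathbb{C}(0,R)$, I pigeonhole the $\alpha_i^{(k)}$ into cells of diameter $\rho_k:=A/(2KR)$ (with $K$ to be chosen), retaining the most populated cell as $S_{k+1}$ and setting $z_i^{(k+1)}:=z_i^{(k)}-\alpha_i^{(k)}v_k$. The identity $\omega(z_i^{(k)}, z_j^{(k)}) = \mathrm{Im}(\overline{\alpha_i^{(k)}}\alpha_j^{(k)}) + \omega(z_i^{(k+1)},z_j^{(k+1)})$ (valid because $v_k\perp_\mathbb{C} z_i^{(k+1)}$) together with $|\mathrm{Im}(\overline{\alpha_i^{(k)}}\alpha_j^{(k)})|\leq R\rho_k = A/(2K)$ for any pair surviving into $S_{k+1}$ will let these terms accumulate over $K$ steps to at most $A/2$; combined with the $A/2$ from the terminal Frobenius branch this yields $|\omega(z_i,z_j)|\leq A$.

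The choice $K=O((\log\ell)^2)$ is dictated by an energy accounting: the total captured energy $\sum_k\sigma_k^2 = \sum_i \bigl(\|z_i\|^2-\|z_i^{(\mathrm{last})}\|^2\bigr) \leq NR^2$, combined with the lower bound $\sigma_k^2\geq |S_k|(A/2)^2/R^2$ in the second branch, will force the iteration to enter the Frobenius branch within roughly $R^4/A^2 = 16(\log\ell)^2$ steps. The pigeonhole loss per step is a factor $O((R/\rho_k)^2) = O((K\log\ell)^2)$, so the total loss after $K$ steps is $2^{O(K\log\log\ell)} = 2^{O((\log\ell)^2\log\log\ell)}$, which is sub-exponential in $\ell$ and easily absorbed by the hypothesis $N\geq 2^{\ell/2}/(16\log\ell)$, so $|S_K|\geq 2$ throughout. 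The hardest part will be the simultaneous control of the iteration count $K$, the cell diameter $\rho_k$, and the subset-size loss, so that every choice is mutually consistent; fortunately, the slack between $\ell/2$ (the exponent in $N$) and $(\log\ell)^2\log\log\ell$ (the exponent in the total loss) for large $\ell$ provides the needed room, and the triangle-inequality-style bookkeeping of the $\alpha$-contributions versus the terminal Frobenius bound is what glues everything together.
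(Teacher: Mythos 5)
Your "Frobenius branch" idea is elegant, and the pigeonhole bookkeeping (the accumulation of per-step errors $A/(2K)$, the orthogonality of the $v_k$, and the $\ell_2$-cover loss estimate) is all sound. The problem is the energy accounting used to bound the iteration count $K$, and this gap is fatal as written.

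You claim that the identity $\sum_k \sigma_k^2 = \sum_i(\|z_i\|^2 - \|z_i^{(\mathrm{last})}\|^2) \leq NR^2$, combined with the lower bound $\sigma_k^2 \geq |S_k|(A/2)^2/R^2$ in the else branch, forces termination within $K \approx R^4/A^2 = 16(\log\ell)^2$ steps. Dividing the two gives $\sum_{k<K}|S_k| \leq 4NR^4/A^2$. But $|S_k|$ shrinks by a factor of $\Theta((K\log\ell)^2)$ at every pigeonhole step, so $\sum_k|S_k|$ is a geometric series dominated by $|S_0| = N$, and the inequality $\sum_k|S_k| \lesssim N(\log\ell)^2$ is satisfied for \emph{every} $K$; it yields no bound on $K$ whatsoever. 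The underlying issue is that $\sigma_k^2 = \sum_{i\in S_k}|\alpha_i^{(k)}|^2$ is an aggregate over $S_k$, which could be concentrated on vectors that are immediately discarded by the pigeonhole. A single surviving vector $i_0 \in S_K$ is only guaranteed $\sum_{k<K}|\alpha_{i_0}^{(k)}|^2 \leq R^2$, with no lower bound per step, so you cannot conclude anything as $K$ grows.

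The paper avoids exactly this by never appealing to a singular value: it takes $v_k$ to be one of the actual vectors $z_{i_0}^{(k)}$ in the surviving family. Then the dichotomy is per-vector: if some $u$ has small projection onto the complex line through $v_k$, then $|\omega(v_k,u)| \leq \|v_k\|\,\|P_k(u)\|$ is already small (this is the termination branch); otherwise \emph{every} surviving $u$ satisfies $\|P_k(u)\| \geq \ell/(5\sqrt{\log\ell})$, which is a deterministic per-vector, per-step energy lower bound. After $50(\log\ell)^2$ steps a surviving $u$ has $\|u\|^2 \geq \sum_k\|P_k(u)\|^2 > R^2$, a contradiction. Your Frobenius-norm termination criterion is an interesting alternative, but it only controls an average over pairs; to use it you would need the else branch to give a lower bound on $|\alpha_i^{(k)}|$ for \emph{every} surviving $i$, which the SVD direction does not deliver. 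Also note that the paper pigeonholes only on the \emph{angle} of $P_k(u)$ in $S^1$ (since $|\omega(x,y)| = \|x\|\|y\||\sin\theta|$ in a complex line), giving a loss factor of only $(\log\ell)^4$ per step rather than your $(R/\rho_k)^2 \approx (K\log\ell)^2$; this is not the obstruction here, but it is worth noting as a cleaner choice if you wish to repair the argument along the paper's lines.
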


\begin{proof}
  Suppose the claim is false.  Let $A_1 = \{z_i\}_{i=1}^N$.  Choose some $v_1 \in A_1$ and let $V_1$ be the 2-dimensional subspace in $\ell_2$ spanned by $v_1$ and $iv_1$.  Let $P_1 : \ell_2 \to \ell_2$ denote the orthogonal projection onto $V_1$.  If $\|P_1(u)\| < \frac{\ell}{5(\log \ell)^{1/2}}$ for some $u \in A_1$, then
  \begin{align*}
    |\omega(v_1,u)| \leq \|v_1\|\|P_1(u)\| \leq \ell (\log \ell)^{1/2} \|P_1(u)\| < \frac{1}{5} \ell^2,
  \end{align*}
  and so we would reach a contradiction.  Thus, we may assume that 
  \begin{align*}
    \|P_1(u)\| \geq \frac{\ell}{5(\log \ell)^{1/2}}, \qquad \forall u \in A_1.
  \end{align*}

  We divide $S^1$ into intervals of length $(\log \ell)^{-4}$ and group the vectors $u \in A_1$ by which interval the angle $P_1(u)$ makes with $P_1(v_1)$ falls into, breaking ties arbitrarily.  One of these intervals must have at least
  \begin{align*}
    \frac{N - 1}{(\log \ell)^4}
  \end{align*}
  vectors associated to it, which we will call $A_2$.  Let $Q_1$ denote the orthogonal projection onto $V_1^\perp$.  Choose a vector $v_2$ from $A_2$, let $V_2$ be the 2-dimensional subspace in $\ell_2$ spanned by $Q_1(v_2)$ and $iQ_1(v_2)$, and let $P_2 : \ell_2 \to \ell_2$ be the orthogonal projection onto $V_2$.  Then $V_2 \subset V_1^\perp$.  Note then that for each $u,v \in A_2$, we have that
  \begin{multline*}
    |\omega(u,v)| = |\omega(P_1(u),P_1(v)) + \omega(Q_1(u),Q_1(v))| \\
    \leq \frac{\ell^2}{(\log \ell)^3} + |\omega(Q_1(u),Q_1(v))|.
  \end{multline*}
  Note that if $u \in A_2$, then
  $$|\omega(Q_1(u),Q_1(v_2))| = |\omega(Q_1(u),P_2(v_2))| = |\omega(P_2(u),P_2(v_2))|.$$
  This gives us that
  \begin{multline*}
    |\omega(u,v_2)| \leq \frac{\ell^2}{(\log \ell)^3} + |\omega(P_2(u),P_2(v_2))| \\
    \leq \frac{\ell^2}{(\log \ell)^3} + \ell (\log \ell)^{1/2} \|P_2(u)\|.
  \end{multline*}
  We see as before that we must have $\|P_2(u)\| \geq \frac{\ell}{5(\log \ell)^{1/2}}$ for all $u \in A_2 \backslash \{v_2\}$ as otherwise we would have a contradiction if $\ell$ is large enough.
 
  We again divide up $S^1$ into intervals of length $(\log \ell)^{-4}$ and group the vectors $u \in A_2$ by which interval the angle $P_2(u)$ makes with $P_2(v_2)$ falls into.  One of these intervals must have at least
  \begin{align*}
    \frac{N - 1 - (\log \ell)^4}{(\log \ell)^8}
  \end{align*}
  vectors assigned to it, which we will take to be $A_3$.

  Continuing this way, we see that up to $k = 50(\log \ell)^2$, we can construct orthogonal symplectic subspaces $V_1,...,V_k$ and a subset of vectors $A_{k+1}$ for which
  \begin{align*}
    |A_{k+1}| \geq \frac{N - \sum_{j=0}^{k-1} (\log \ell)^{4j}}{(\log \ell)^{4k}} \geq \frac{N- k (\log \ell)^{4k}}{(\log \ell)^{4k}}
  \end{align*}
  such that if $u \in A_{k+1}$, then $\|P_{V_j}(u)\| \geq \frac{\ell}{5(\log \ell)^{1/2}}$ for every $j$.  By our choice of $N$ and $k$, if $\ell$ is larger than some absolute constant, then $A_{k+1}$ is non-empty.  But if $u \in A_{k+1}$, we have
  \begin{align*}
    \|u\|^2 \geq \sum_{j=1}^k \|P_{V_j}(u)\|^2 \geq 50(\log \ell)^2 \frac{\ell^2}{25 \log \ell} \geq 2 \ell^2 (\log \ell).
  \end{align*}
  This contradicts our assumption that $\|u\| \leq \ell (\log \ell)^{1/2}$.
\end{proof}

Clearly, the proof of Lemma \ref{symplectic-collapse} works for more general $N$, such as any exponent of $\ell$.  This $N$ will be the specific one we need.

\begin{proof}[Proof of Theorem \ref{tree-nonembedding}]
  In this proof, a familiarity with \cite{matousek} with be helpful (but not crucial) for the reader.  Suppose there exists a noncontracting map $f : B_m \to \H_\infty$ such that $\|f\|_{lip} = K = 2c(\log m)^{1/2}$ for $c > 0$ small enough so that if we set $h = 2^{(A(2)+1)K^2}$ then $h < m^{1/4}$.  Here we've applied Lemma \ref{l:metric-diff} to get $A(2)$.  If we also set $r = 10^{100} \cdot 2K^3$, and $k = r^{(h+1)^2} \leq (10^{100} K)^{4m^{1/2}} \leq \exp(Cm^{1/2} \log \log m)$, then
  $$2 h \log k < 2Cm^{3/4} \log \log m \leq m$$
  as long as $m$ is sufficiently large and so there exists a biLipschitz copy of $T_{k,h}$ inside $B_m$, and we can assume the map of $T_{k,h}$ into $B_m$ is noncontracting.  Let us restrict $f$ to this subtree.  It is clear that $f$ is still noncontracting and has the same Lipschitz bound.  If we color each pair $\{x,y\} \in SP(T_{k,h})$ according to the distortion of their distance by $f$
  \begin{align*}
    \left\lfloor 10^{100} K^2 \frac{d(f(x),f(y))}{d(x,y)} \right\rfloor \in \{0,...,r-1\},
  \end{align*}
  then by the fact that $k = r^{(h+1)^2}$, we get that there exists some subtree $B_h$ of $T_{k,h}$ such that the colors of $\{x,y\} \in SP(B_h)$ depend only on the levels of $x$ and $y$.

  Consider a root-leaf path $P$ in $B_h$.  As $h = 2^{(A(2)+1)K^2}$, there exists three vertices $x_0,x_1,x_2$ in $P$ at levels $j,j+\ell,j+2\ell$, respectively, such that $\ell \geq 2^{K^2}$ and $f$ is $(1+\delta)$-biLipschitz restricted on $x_0,x_1,x_2$ where
  \begin{align}
    \delta = 10^{-100} \left( \frac{\ell}{d(f(x_0),f(x_1))} \right)^2 \leq 10^{-100}. \label{delta-defn}
  \end{align}
  The latter inequality comes from the fact that $f$ is noncontracting.  We will suppose without loss of generality that $\ell$ is even.  Note that
  \begin{align}
    \log \ell \geq K^2 = 4c^2 \log m. \label{l-m-ineq}
  \end{align}
  
  Now consider all the descendents of $x_1$ in $B_h$ that lie $\ell/2$ levels down.  We can write them as such $\{x_i'\}_{i=1}^{2^{\ell/2}}$.  For each $i \in \{1,...,2^{\ell/2}\}$, choose a decendent of $x_i'$ in $B_h$ that lies on the same level as $x_2$.  Thus, we have chosen $2^{\ell/2}$ points and we denote them by $\{y_i\}_{i=1}^{2^{\ell/2}}$.  Note then that $\{x_0,x_1,y_i,y_j\}$ is a $\delta$-fork for each $i,j$.  Furthermore, we have that $\ell \leq d_T(y_i,y_j) \leq 2\ell$.

  We will suppose without loss of generality that $f(x_1) = 0$.  Consider the central coordinates $z_i$ of $f(y_i)$.  As we have for every $i \in \{1,...,2^{\ell/2}\}$ that
  \begin{align*}
    |z_i| \leq N(f(y_i))^2 = d(f(x_1),f(y_i))^2 \leq 4c^2\ell^2 (\log m) \overset{\eqref{l-m-ineq}}{\leq} \ell^2 (\log \ell),
  \end{align*}
  we get by the pigeonhole principle that there exists a subset $\{y_i'\}_{i=1}^N \subseteq \{y_i\}_{i=1}^{2^{\ell/2}}$ where $N \geq \frac{2^{\ell/2}}{16\log \ell}$ and all central coordinates of $f(y_i')$ differ by no more than $\frac{1}{16} \ell^2$.  We also have that
  \begin{align*}
    \|\pi(y_i')\| \leq N(f(y_i')) \leq d(f(x_1),f(y_i')) \leq \ell (\log \ell)^{1/2},
  \end{align*}
  and so applying Lemma \ref{symplectic-collapse} to $\{y_i'\}_{i=1}^N$, we get that there exist two elements (we will suppose by renaming them that they are 0 and 1) $y_0' = (a_0,b_0)$ and $y_1' = (a_1,b_1)$ so that
  \begin{align*}
    |\omega(a_0,a_1)| \leq \frac{1}{4} \ell^2.
  \end{align*}
  Thus, we see that
  \begin{multline*}
    NH(f(y_0')^{-1}f(y_1'))^2 = \left| b_0 - b_1 - \frac{1}{2} \omega(a_0,a_1) \right| \leq |b_0 - b_1| + \frac{1}{2} |\omega(a_0,a_1)| \\
    \leq \frac{1}{8} \ell^2 + \frac{1}{8} \ell^2 \leq \frac{1}{4} \ell^2 \leq \frac{1}{4} d(f(y_0'),f(y_1'))^2.
  \end{multline*}
  The last inequality comes from the fact that $d_T(y_0',y_1') \geq \ell$ and $f$ is noncontracting.  Thus, by Lemma \ref{collapse-convex}, we have that
  \begin{align*}
    \frac{\ell}{2} \leq d(f(y_0'),f(y_1')) \leq 2000 \delta^{1/2} d(f(x_0),f(x_1)) \overset{\eqref{delta-defn}}{\leq} 2000 \cdot 10^{-50} \ell < \frac{\ell}{2},
  \end{align*}
  a contradiction.
\end{proof}

\begin{bibdiv}
\begin{biblist}

\bib{ball}{article}{
  author = {Ball, K.},
  title = {The Ribe programme},
  book = {
    series = {S\'{e}minaire Bourbaki},
    note = {expos\'{e} 1047},
  },
  year = {2012},
}

\bib{bourgain}{article}{
  author = {Bourgain, J.},
  title = {The metrical interpretation of superreflexivity in Banach spaces},
  journal = {Israel J. Math.},
  volume = {56},
  number = {2},
  year = {1986},
  pages = {222-230},
}

\bib{cheeger-kleiner-1}{article}{
  author = {Cheeger, J.},
  author = {Kleiner, B.},
  title = {Differentiating maps into $L^1$ and the geometry of $BV$ functions},
  journal = {Ann. Math.},
  volume = {171},
  number = {2},
  year = {2010},
  pages = {1347-1385},
}

\bib{cheeger-kleiner-2}{article}{
  author = {Cheeger, J.},
  author = {Kleiner, B.},
  title = {Metric differentiation, monotonicity, and maps to $L^1$},
  journal = {Invent. Math.},
  volume = {182},
  number = {2},
  year = {2010},
  pages = {335-370},
}

\bib{ckn}{article}{
  author = {Cheeger, J.},
  author = {Kleiner, B.},
  author = {Naor, A.},
  title = {Compression bounds for Lipschitz maps from the Heisenberg group to $L_1$},
  journal = {Acta Math.},
  volume = {207},
  number = {2},
  year = {2011},
  pages = {291-373},
}

\bib{cygan}{article}{
  author = {Cygan, H.},
  title = {Subadditivity of homogeneous norms on certain nilpotent Lie groups},
  journal = {Proc. Amer. Math. Soc.},
  volume = {83},
  number = {1},
  year = {1981},
  pages = {69-70},
}

\bib{enflo}{inproceedings}{
  author = {Enflo, P.},
  title = {Banach spaces which can be given an equivalent uniformly convex norm},
  booktitle = {Proceedings of the International Symposium on Partial Differential Equations and the Geometry of Normed Linear Spaces (Jerusalem, 1972)},
  volume = {14},
  pages = {281-288},
  year = {1973},
}

\bib{james-1}{article}{
  author = {James, R.C.},
  title = {Uniformly non-square Banach spaces},
  journal = {Ann. Math. (2)},
  volume = {80},
  pages = {542-550},
  year = {1964},
}

\bib{james-2}{article}{
  author = {James, R.C.},
  title = {Super-reflexive Banach spaces},
  journal = {Canad. J. Math.},
  volume = {24},
  pages = {896-904},
  year = {1972},
}

\bib{johnson-schechtman}{article}{
  author = {Johnson, W. B.},
  author = {Schechtman, G.},
  title = {Diamond graphs and super-reflexivity},
  journal = {J. Topol. Anal.},
  volume = {1},
  number = {2},
  pages = {177-189},
}

\bib{laakso}{article}{
  author = {Laakso, T.J.},
  title = {Plane with $A_\infty$-weighted metric not bi-Lipschitz embeddable to $\R^N$},
  journal = {Bull. London Math. Soc.},
  volume = {34},
  number = {6},
  pages = {667-676},
  year = {2002},
}

\bib{lafforgue-naor}{article}{
  author = {Lafforgue, V.},
  author = {Naor, A.},
  title = {Vertical versus horizontal Poincar\'{e} inequalities on the Heisenberg group},
  journal = {Israel J. Math.},
  year = {2014},
  volume = {203},
  number = {1},
  pages = {309-339},
}

\bib{lang-plaut}{article}{
  author = {Lang, U.},
  author = {Plaut, C.},
  title = {Bilipschitz embeddings of metric spaces into space forms},
  journal = {Geom. Dedicata},
  volume = {87},
  number = {1-3},
  pages = {285-307},
  year = {2001},
}

\bib{LNP}{article}{
  author = {Lee, J.},
  author = {Naor, A.},
  author = {Peres, Y.},
  title = {Trees and Markov convexity},
  journal = {Geom. Funct. Anal.},
  volume = {18},
  number = {5},
  year = {2009},
  pages = {1609-1659},
}

\bib{li-carnot}{article}{
  author = {Li, S.},
  title = {Coarse differentiation and quantitative nonembeddability of Carnot groups},
  journal = {J. Funct. Anal.},
  volume = {266},
  pages = {4616-4704},
  year = {2014},
}

\bib{li-schul-1}{article}{
  author = {Li, S.},
  author = {Schul, R.},
  title = {The traveling salesman problem in the Heisenberg group: upper bounding curvature},
  journal = {Trans. Amer. Math. Soc.},
  note = {To appear},
  year = {2013},
}

\bib{li-schul-2}{article}{
  author = {Li, S.},
  author = {Schul, R.},
  title = {An upper bound for the length of a traveling salesman path in the Heisenberg group},
  journal = {Rev. Mat. Iberoam.},
  note = {To appear},
  year = {2014},
}

\bib{matousek}{article}{
  author = {Matou\v{s}ek, J.},
  title = {On embedding trees into uniformly convex Banach spaces},
  journal = {Israel J. Math.},
  volume = {114},
  number = {1},
  pages = {221-237},
  year = {1999},
}

\bib{MN}{article}{
	Author = {Mendel, M.},
  Author = {Naor, A.},
	Journal = {J. Eur. Math. Soc.},
	Number = {1},
  Volume = {15},
	Pages = {287-337},
	Title = {Markov convexity and local rigidity of distorted metrics},
	Year = {2013}
}

\bib{montgomery}{book}{
  author = {Montgomery, R.},
  title = {A tour of sub-Riemannian geometries, their geodesics and applications},
  series = {Mathematical Surveys and Monographs},
  volume = {91},
  publisher = {American Mathematical Society},
  year = {2002},
}

\bib{naor}{article}{
  author = {Naor, A.},
  journal = {Jpn. J. Math.},
  title = {An introduction to the Ribe program},
  volume = {7},
  number = {2},
  pages = {167-233},
  year = {2012},
}

\bib{pansu}{article}{
  author = {Pansu, P.},
  journal = {Ann. Math.},
  title = {M\'etriques de Carnot-Carath\'eodory et quasiisom\'etries des espaces sym\'etriques de rang un},
  volume = {128},
  pages = {1-60},
  year = {1989},
}

\bib{pisier}{article}{
  author = {Pisier, G.},
  journal = {Israel J. Math.},
  title = {Martingales with values in uniformly convex spaces},
  volume = {20},
  number = {3-4},
  pages = {326-350},
  year = {1975},
}

\bib{ribe}{article}{
  author = {Ribe, M.},
  title = {On uniformly homeomorhic normed spaces},
  journal = {Ark. Mat.},
  volume = {14},
  number = {1-2},
  pages = {237-244},
  year = {1976},
}

\bib{semmes}{article}{
  author = {Semmes, S.},
  title = {On the nonexistence of bi-Lipschitz parameterizations and geometric problems about $A_\infty$-weights},
  journal = {Rev. Mat. Iberoam.},
  volume = {12},
  number = {2},
  pages = {337-410},
  year = {1996},
}


\end{biblist}
\end{bibdiv}

\end{document}